\title[Finite presentations of the mapping class groups of Heegaard splittings ]
{Finite presentations of the mapping class groups of once-stabilized Heegaard splittings}
\author{Daiki Iguchi}
\address{
Department of Mathematics \newline
\indent Hiroshima University, 1-3-1 Kagamiyama, Higashi-Hiroshima, 739-8526, Japan}
\email{diguchi00@gmail.com}
\date{\today}
\theoremstyle{plain}
\newtheorem*{theorem*}{Theorem}
\newtheorem*{lemma*} {Lemma}
\newtheorem*{corollary*} {Corollary}
\newtheorem*{proposition*}{Proposition}
\newtheorem*{conjecture*}{Conjecture}
\newtheorem*{claim*}{Claim}
\newtheorem{theorem}{Theorem}[section]
\newtheorem{lemma}[theorem]{Lemma}
\newtheorem{corollary}[theorem]{Corollary}
\newtheorem{proposition}[theorem]{Proposition}
\newtheorem{claim}{Claim}
\theoremstyle{remark}
\newtheorem*{definition}{Definition}
\newtheorem{remark}{Remark}
\newtheorem*{ack}{Acknowledgements}
\theoremstyle{definition}
\newtheoremstyle{citing}
  {}
  {}
  {\itshape}
  {}
  {\bfseries}
  {.}
  {.5em}
  {\thmnote{#3}}
\theoremstyle{citing}
\newcommand{\MCG}{\mathrm{MCG}}
\newcommand{\Diff}{\mathrm{Diff}}
\newcommand{\id}{\mathrm{id}}
\newcommand{\Cl}{\operatorname{Cl}}
\newcommand{\Int}{\operatorname{Int}}
\newcommand{\Isot}{\mathrm{Isot}}
\newcommand{\He}{\mathcal{H}}
\newcommand{\mathC}{\mathcal{C}}
\newcommand{\Unk}{\mathrm{Unk}}
\newcommand{\Rsc}{\mathscr{R}}
\newcommand{\Sig}{\Sigma}
\newcommand{\vph}{\varphi}
\newcommand{\om}{\omega}
\newcommand{\lam}{\lambda}
\newcommand{\ep}{\epsilon}
\newcommand{\Af}{\mathfrak{A}}
\newcommand{\Ff}{\mathfrak{F}}
\newcommand{\pp}{\prime+}
\newcommand{\pn}{\prime-}
\newcommand{\wtil}{\widetilde}
\begin{document}

\maketitle

\begin{abstract}  
Let $g \ge 2$ and 
assume that we are given a genus $g$ Heegaard splitting 
of a closed orientable $3$-manifold 
with the distance greater than $2g+2$. 
We prove that the mapping class group of the once-stabilization 
of such a Heegaard splitting is finitely presented. 
\end{abstract}

\vspace{1em}

\renewcommand{\thefootnote}{}
\footnote{\textbf{2020 Mathematics Subject Classification}: 57K30, 57M60} 
\footnote{\textbf{Keywords}: 
3-manifold, mapping class group, Heegaard splitting, space of Heegaard splittings.}

\section{Introduction}\label{sec:introduction}

Let $(M,\Sig)$ be a Heegaard splitting 
of a compact orientable $3$-manifold $M$. 
The {\em mapping class group} $\MCG(M,\Sig)$ of the Heegaard splitting $(M,\Sig)$ 
is defined to be the group $\pi_{0}(\Diff(M,\Sig))$ of path-connected components  
of the group $\Diff(M,\Sig)$, 
where we denote by $\Diff(M,\Sig)$ the group of diffeomorphisms of $M$ 
that preserve $\Sig$ setwise.  
There is a natural homomorphism from $\MCG(M,\Sig)$ 
to the mapping class group $\MCG(M)$ of $M$. 
Following Johnson \cite{Jo13}, we call the kernel of this natural homomorphism 
the {\em isotopy subgroup} of $\MCG(M,\Sig)$, and denote it by $\Isot(M,\Sig)$. 

In this paper, we are interested in 
the isotopy subgroup of the mapping class group of  a once-stabilized Heegaard splitting. 
Let $(M,\Sig')$ be a genus $g(\Sig') \ge 2$ Heegaard splitting 
of a closed orientable $3$-manifold $M$. 
We say that a Heegaard splitting $(M,\Sig)$ 
is a (once-){\em stabilization} of $(M,\Sig')$ if 
it is obtained from $(M,\Sig')$ by adding a $1$-handle whose core is parallel into $\Sig'$.  
Corresponding to two handlebodies $V^-_{\Sig'}$ and $V^+_{\Sig'}$  in $M$ 
with $\partial V^-_{\Sig'}=\partial V^+_{\Sig'}=\Sig'$, 
there are two obvious subgroups of $\Isot(M,\Sig)$:  
one is $\Isot(V^-_{\Sig'},\Sig^-)$ and the other is $\Isot(V^+_{\Sig'},\Sig^+)$, 
where $\Sig^-$ (resp. $\Sig^+$) is the Heegaard surface obtained by pushing $\Sig$ 
into $V^-_{\Sig'}$ (resp. $V^+_{\Sig'}$) slightly. 
It is natural to ask when these subgroups generate $\Isot(M,\Sig)$. 
In  \cite{Jo13}, Johnson proved that 
if the distance $d(\Sig')$ of the Heegaard splitting $(M,\Sig')$ 
is greater than $2g(\Sig')+2$, then 
the two subgroups defined above generate $\Isot(M,\Sig)$. 
As a consequence of this fact together with a result in Scharlemann \cite{Sc13} 
that says $\Isot(V^\pm_{\Sig'},\Sig^\pm)$ are finitely generated,  
it follows that $\Isot(M,\Sig)$ and $\MCG(M,\Sig)$ are finitely generated. 
In that paper, Johnson conjectured 
that $\Isot(M,\Sig)$ is an amalgamation of the two groups 
$\Isot(V^-_{\Sig'},\Sig^-)$ and $\Isot(V^+_{\Sig'},\Sig^+)$. 
This is the main result of the paper: 

\begin{theorem}\label{thm:amalgamation}
Suppose that $(M,\Sig')$ is Heegaard splitting of 
a closed orientable $3$-manifold $M$ with  
$d(\Sig') >2g(\Sig')+2$, and that 
$(M,\Sig)$ is a once-stabilization of $(M,\Sig')$. 
Suppose that $(V^-_{\Sig'},\Sig^-)$ (resp. $(V^+_{\Sig'},\Sig^+)$) is the Heegaard splitting 
of $V^-_{\Sig'}$ (resp. $V^+_{\Sig'}$) 
obtained by pushing $\Sig$ 
into $V^-_{\Sig'}$ (resp. $V^+_{\Sig'}$) slightly, 
where $V^-_{\Sig'}$ and $V^+_{\Sig'}$ are handlebodies in $M$ bounded by $\Sig'$. 
Then, $\Isot(M,\Sig)$ is isomorphic 
to an amalgamation of the two groups $\Isot(V^-_{\Sig'},\Sig^-)$ 
and $\Isot(V^+_{\Sig'},\Sig^+)$.  
\end{theorem}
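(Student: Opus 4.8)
The plan is to realize $\Isot(M,\Sig)$ as the fundamental group of a graph of groups by exhibiting a tree on which it acts. Write $G=\Isot(M,\Sig)$ and let $V^-_\Sig,V^+_\Sig$ be the two handlebodies bounded by $\Sig$. I will construct a graph $\mathC$ carrying a simplicial $G$-action without inversions such that: (i) $\mathC/G$ is a single edge --- there are two orbits of vertices and one orbit of edges; (ii) the stabilizers of the two endpoints of a base edge are, respectively, $\Isot(V^+_{\Sig'},\Sig^+)$ and $\Isot(V^-_{\Sig'},\Sig^-)$, and the stabilizer of the edge is their intersection; and (iii) $\mathC$ is a tree. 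Bass--Serre theory then immediately yields $G\cong\Isot(V^-_{\Sig'},\Sig^-)\ast_{C}\Isot(V^+_{\Sig'},\Sig^+)$ with $C=\Isot(V^-_{\Sig'},\Sig^-)\cap\Isot(V^+_{\Sig'},\Sig^+)$, which is the assertion.

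The vertices of $\mathC$ are the isotopy classes in $M$ of \emph{destabilizing surfaces}: closed surfaces $F$ of genus $g(\Sig')$, disjoint from $\Sig$, lying in the interior of $V^-_\Sig$ or of $V^+_\Sig$, such that the region of the ambient $\Sig$-handlebody between $F$ and $\Sig$ realizes $\Sig$ as a stabilization of $F$; such an $F$ is then automatically a Heegaard surface of $M$, hence isotopic to $\Sig'$ by uniqueness of high-distance splittings. A more convenient local model, better adapted to the surgery arguments below, records instead the destabilizing disks, or the canceling disk pairs $(D^-,D^+)$ where $D^-$ is a compressing disk of $V^-_\Sig$, $D^+$ one of $V^+_\Sig$, and $|\partial D^-\cap\partial D^+|=1$. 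Two vertices span an edge when one lies in $V^-_\Sig$ and the other in $V^+_\Sig$ and they are the two push-offs, to the two sides of $\Sig$, of one and the same destabilization. Since every element of $G$ preserves each handlebody of $\Sig$ --- a point one verifies using the distance hypothesis --- the action preserves the ``$-$/$+$'' type of a vertex, so it is without inversions and has at most two vertex orbits.

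To establish (i) I must show $G$ acts transitively on edges. This is where $d(\Sig')>2g(\Sig')+2$ enters, via the results of Johnson and Scharlemann recalled above: every destabilization of $(M,\Sig)$ recovers $(M,\Sig')$ up to isotopy, and since high distance forces $\Isot(M,\Sig')$ to be trivial, any two destabilizations of $(M,\Sig)$ are interchanged by an element of $G$. For (ii): if $\phi\in G$ fixes the class of a destabilizing surface $F\subset V^-_\Sig$, then after an isotopy $\phi$ preserves $F$; the rigidity of $F\simeq\Sig'$ lets one further isotope $\phi$, through diffeomorphisms preserving $F$, so that it becomes supported in the handlebody of $F$ containing $\Sig$ --- a copy of $V^+_{\Sig'}$ --- where it preserves $\Sig$, exhibiting $\phi$ as an element of $\Isot(V^+_{\Sig'},\Sig^+)$; conversely such elements plainly fix $[F]$. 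Hence the stabilizer of a ``$-$''-vertex is $\Isot(V^+_{\Sig'},\Sig^+)$, that of a ``$+$''-vertex is $\Isot(V^-_{\Sig'},\Sig^-)$, and (no inversions) the stabilizer of an edge is the intersection, which geometrically is the set of mapping classes supported in a neighborhood of $\Sig'$ that preserve a destabilization. The first half of (iii), connectedness of $\mathC$, is then just a restatement of Johnson's theorem that these two subgroups generate $G$: a word in them traces a path in $\mathC$ from the base edge to its $G$-translate.

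The remaining point --- that $\mathC$ is simply connected, i.e.\ a tree --- is the technical core, and I expect it to be the main obstacle. Concretely, one must rule out reduced cycles $F^-_0,F^+_0,F^-_1,F^+_1,\dots,F^-_0$ of alternating destabilizing surfaces in which consecutive terms form canceling pairs. The line of attack is an innermost-disk/outermost-arc surgery analysis: put the boundary curves of the canceling disk pairs occurring along a putative cycle into minimal position on $\Sig$, and use $d(\Sig')>2g(\Sig')+2$ to constrain their intersections so strongly that successive destabilizations along the cycle must be \emph{compatible} --- realized disjointly, or sharing a disk --- which lets the cycle be reduced. In effect the heart of the matter is a uniqueness statement with no monodromy: all destabilizations of a once-stabilized, sufficiently high-distance Heegaard splitting, and all ways of passing between them, fit together into a tree. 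Once this is in hand the theorem follows from Bass--Serre theory; and combined with the finite presentability of $\Isot(V^\pm_{\Sig'},\Sig^\pm)$, the finite generation of their intersection, and the finiteness of the image of $\MCG(M,\Sig)$ in $\MCG(M)$, it also yields the finite presentability of $\MCG(M,\Sig)$.
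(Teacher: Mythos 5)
Your Bass--Serre framework is a legitimate reformulation, but the proposal does not prove the theorem: the step you yourself flag as ``the main obstacle'' --- simple connectivity of the graph $\mathC$ --- is precisely the entire content of the result, and you offer only a one-sentence plan for it. Given transitivity on edges and the stabilizer identifications, the statement ``$\mathC$ is a tree'' is \emph{equivalent} to the amalgamated product decomposition; everything before it (two vertex orbits, one edge orbit, connectedness from Johnson's generation theorem) only reproduces the surjection from the amalgam onto $\Isot(M,\Sig)$, which was already known from \cite{Jo13}. The paper's actual proof of the missing injectivity goes through the space of Heegaard splittings: it identifies $\Isot(M,\Sig)$ with $\pi_1(\He)$ and $\Isot(V^\pm_{\Sig'},\Sig^\pm)$ with $\pi_1(\He^\pm)$ (Theorems~\ref{thm:JM13} and \ref{thm:large_manifolds}), and then proves $\pi_2(\He,\He^\cup)=0$ by taking a disk family $\vph:D^2\to\He$, building a $2$-parameter family of sweep-outs, analyzing the resulting graphic in $J^2\times D^2$ to show every $h_u$ spans $f$ (this is where $d(\Sig')>2g(\Sig')+2$ is used, via Lemma~\ref{lem:splitting}), and then running Hatcher's parametrized innermost-disk surgery to push the whole disk into $\He^\cup$. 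Your proposed ``put the boundary curves of canceling disk pairs in minimal position and constrain intersections'' is not an argument that handles continuous $2$-parameter families, and it is exactly for such families that the non-parametrized innermost-disk reasoning breaks down; this is why the paper invokes \cite{Hat76} and the Smale Conjecture.

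Two further points would need care even if the tree claim were granted. First, your transitivity and stabilizer computations silently pass from ``isotopic in $M$'' to ``isotopic through diffeomorphisms preserving $\Sig$'' (e.g.\ ``after an isotopy $\phi$ preserves $F$''); justifying such steps is again a statement about path components of spaces of surfaces and is where Theorems~\ref{thm:JM13} and \ref{thm:large_manifolds} enter in the paper --- it is not free. Second, for the conclusion to read ``amalgamation of the two groups $\Isot(V^\pm_{\Sig'},\Sig^\pm)$'' via Bass--Serre you must know the vertex stabilizers are these groups themselves and not proper quotients, i.e.\ that $\Isot(V^\pm_{\Sig'},\Sig^\pm)\to\Isot(M,\Sig)$ is injective; this is part of what the homotopy equivalence $\He^\cup\simeq\He$ delivers and is not addressed in your sketch.
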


One might expect 
that the above theorem has something to do with van Kampen's theorem. 
This idea can be justified as follows. 
Following Johnson-McCullough \cite{JM13}, 
we define the space $\He(M,\Sig)$ to be $\Diff(M)/\Diff(M,\Sig)$ and 
call it the {\em space of Heegaard splittings} equivalent to $(M,\Sig)$. 
Let $\He$ denote the path-connected component of $\He(M,\Sig)$ 
containing the left coset $\mathrm{id}_M \cdot \Diff(M,\Sig)$. 
It is known that 
if a $3$-manifold admits a Heegaard splitting with the distance greater than two, 
then such a $3$-manifold must be hyperbolic. 
By a result in \cite{JM13} (see Theorem~\ref{thm:JM13} below for more details) 
together with this fact, 
it follows that $\Isot(M,\Sig)$ is isomorphic to $\pi_1(\He)$. 

Now fix a {\em spine} $K=K^- \cup K^+$ of the Heegaard splitting $(M,\Sig')$, that is, 
$K^-$ and $K^+$ are finite graphs embedded in $M$ such that 
the complement $M \setminus K$ is diffeomorphic to $\Sig' \times (-1,1)$ 
and $\Sig'$ is a slice of this product structure.  
Denote by $\He^-$ (resp. $\He^+$) the subspace of $\He$ 
consisting of those elements represented by a Heegaard surface $T$ such that 
$T$ is a genus $g(\Sig')+1$ Heegaard surface 
of the genus $g(\Sig')$ handlebody $M \setminus \Int(N(K^+))$ 
(resp. $M \setminus \Int(N(K^-))$), 
where $N(K^+)$ (resp. $N(K^-)$)  
is a small neighborhood of $K^+$ (resp. $K^-$). 
By the similar reason as above (see Theorem~\ref{thm:large_manifolds} below), 
we can identify $\Isot(V^-_{\Sig'},\Sig^-)$ and $\Isot(V^+_{\Sig'},\Sig^+)$ with 
the fundamental groups $\pi_1(\He^-)$ and $\pi_1(\He^+)$ respectively. 
Set $\He^\cup:=\He^- \cup \He^+$. 
Theorem~\ref{thm:amalgamation} is a corollary of the following.  
 
\begin{theorem}\label{thm:homotopy_equivalence}
The inclusion $\He^{\cup} \rightarrow \He$ is a homotopy equivalence.  
\end{theorem}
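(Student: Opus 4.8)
The plan is to prove the stronger statement that the inclusion $\He^{\cup}\hookrightarrow\He$ induces an isomorphism on all homotopy groups, and then to invoke Whitehead's theorem; here $\He$ and $\He^{\cup}$ have the homotopy type of CW complexes because $\Diff(M)$ and $\Diff(M,\Sig)$ do and $\He$ is a quotient of them. Concretely, I would show that $\pi_{n}(\He,\He^{\cup})=0$ for all $n\ge 1$, i.e.\ that every map $f\colon(D^{n},S^{n-1})\to(\He,\He^{\cup})$ is homotopic rel $S^{n-1}$ to one with image in $\He^{\cup}$; combined with the corresponding unparametrized statement for $n=0$ (every Heegaard surface in $\He$ can be isotoped into $\He^{\cup}$, which is already contained in Johnson's work), the long exact sequence of the pair then gives the desired isomorphism.

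Unwinding the definition of the space of Heegaard splittings, and lifting the map along the fibration $\Diff(M)\to\He$ over the contractible disk $D^{n}$, the map $f$ is the same data as a smooth family $\{\Sig_{x}\}_{x\in D^{n}}$ of Heegaard surfaces of $M$, each isotopic to $\Sig$, which over $S^{n-1}$ already represents points of $\He^{\cup}$. Fix once and for all a spine $K=K^{-}\cup K^{+}$ of $(M,\Sig')$, so that $M\setminus K\cong\Sig'\times(-1,1)$ carries a height function $h$ with $K^{-}$ and $K^{+}$ at its two ends. A surface represents a point of $\He^{-}$ (resp.\ $\He^{+}$) precisely when it can be isotoped into a product region $\Sig'\times[a,b]$ disjoint from $K^{+}$ (resp.\ from $K^{-}$), after which one checks directly that it is then a genus $g(\Sig')+1$ Heegaard surface of the complementary handlebody. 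So the task is to isotope the whole family, without disturbing it over $S^{n-1}$, until for each parameter $x$ the surface $\Sig_{x}$ lies in such a product region disjoint from one of the two spines.

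The engine for a single parameter is Johnson's argument in \cite{Jo13}, which I would phrase via sweep-outs: put $\Sig_{x}$ in Morse position with respect to $h$, form the Rubinstein--Scharlemann graphic of the pair consisting of $h$ and the sweep-out of $M$ determined by $\Sig_{x}$ (whose level sets interpolate between the two handlebodies cut off by $\Sig_{x}$), and use the hypothesis $d(\Sig')>2g(\Sig')+2$ to locate in the graphic a splitting configuration. This produces a height at which $\Sig_{x}$ is isotopic into a product region missing one of $K^{-}$, $K^{+}$, and the combinatorics also records which spine can be removed. Note that the union $\He^{-}\cup\He^{+}$, rather than a single $\He^{\pm}$, is forced here: whether the north or the south spine is removable may change with $x$, but along the locus where this changes both are removable, so the ``target side'' varies continuously with values in $\He^{-}\cup\He^{+}$.

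The heart of the argument --- and the step I expect to be the main obstacle --- is to carry this out continuously in $x\in D^{n}$. The combinatorial type of the graphic jumps along a stratification of the parameter space, so what is needed is a parametrized Rubinstein--Scharlemann theory in the spirit of Cerf theory: build the isotopy over the top-dimensional stratum from the generic splitting configuration, then work down through the codimension-one walls (births and deaths of tangencies and of graphic vertices) and the higher strata, interpolating the constructions across successive strata. The boundary constraint demands extra care, since over $S^{n-1}$ the family is already in $\He^{\cup}$ and must not move: the construction must depend only on $\Sig_{x}$ and $K$ and be canonical up to a contractible space of auxiliary choices (Morse position, interpolating isotopies, smoothing parameters), so that obstruction theory over the cells of $D^{n}$ assembles the local constructions into one global homotopy fixing $S^{n-1}$. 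Granting this, the family ends inside $\He^{\cup}$, giving the required nullhomotopy of $f$ rel $S^{n-1}$; hence $\pi_{n}(\He,\He^{\cup})=0$ for all $n$, and $\He^{\cup}\hookrightarrow\He$ is a homotopy equivalence.
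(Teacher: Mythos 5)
Your overall strategy---kill the relative homotopy groups, lift along the fibration $\Diff(M)\rightarrow\He$ to get families of surfaces, and use sweep-outs plus the hypothesis $d(\Sig')>2g(\Sig')+2$---is the right one, but the proposal has a genuine gap exactly where you write ``Granting this.'' The appeal to ``a parametrized Rubinstein--Scharlemann theory in the spirit of Cerf theory, working down through the strata'' is a program, not an argument, and it is the entire content of the theorem. You also miss the structural reduction that makes the problem tractable: by Theorems~\ref{thm:JM13} and \ref{thm:large_manifolds} (Johnson--McCullough together with Earle--Eells and Hatcher on $\Diff$ of handlebodies and of $F_g\times J$), the spaces $\He$, $\He^{\pm}$ and $\He^{\cap}$ are aspherical, so Mayer--Vietoris and Hurewicz give $\pi_k(\He^{\cup})=\pi_k(\He)=0$ for $k\ge 2$ and the whole theorem reduces to surjectivity on $\pi_1$ (already in \cite{Jo13}) plus $\pi_2(\He,\He^{\cup})=0$. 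Running obstruction theory over $D^n$ for every $n$, as you propose, is both unnecessary and far harder than what is actually needed. (A terminological slip as well: the distance hypothesis \emph{rules out} split graphics via Lemma~\ref{lem:splitting}, forcing each graphic to be \emph{spanned}; one does not ``locate a splitting configuration.'')

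Even for the case $n=2$ that remains, the parametrized step is not done stratum by stratum on a family of $2$-dimensional graphics. The paper forms a single $4$-dimensional graphic in $J^2\times D^2$ from $f$ and the family $\{h_u\}$, proves via Proposition~\ref{prop:frontiors} and Lemma~\ref{lem:splitting} that every slice is spanned (Lemma~\ref{lem:spanning}), and uses the $t$-convexity of the regions $\Rsc_a$ and $\Rsc_b$ to extract a section of an interval bundle over $D^2$, homotoping the family so that every $\Sig_u$ is a spanning surface for $f$. The subsequent compressions pushing each $\Sig_u$ off one of the spines must be performed \emph{simultaneously and smoothly} in $u$; this is Hatcher's parametrized innermost-disk argument from \cite{Hat76}, which in the smooth category requires the Smale conjecture \cite{Hat83} to make the compressing isotopies depend smoothly on the parameter. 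Finally, one still has to verify that the compressed surfaces lie in $\He^{\cup}$, which the paper does by an induction along paths in $D^2$ handling the cases where a surface is disjoint from both spines or is bicompressible in $\Sig'\times J$ (hence reducible). None of these ingredients appears in your sketch, so as written the proposal does not constitute a proof.
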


\noindent It is well known that 
a genus $g+1$ Heegaard splitting of a genus $g$ handlebody 
is unique up to isotopy. 
Similarly, a genus $g+1$ Heegaard splitting of the space $F_g \times [-1,1]$
is unique up to isotopy, 
where we denote by $F_g$ a closed genus $g$ surface. 
In other words, $\He^+$, $\He^-$ and $\He^- \cap \He^+$ are all connected,  
and hence van Kampen's Theorem applies to 
the triple $(\He^-,\He^+,\He^- \cap \He^+)$. 

The proof of Theorem~\ref{thm:homotopy_equivalence} is based on  
the concept of {\em graphics}, 
which was first introduced by Cerf \cite{Cer68} and then 
successfully applied to the study of Heegaard splittings by 
Rubinstein and Scharlemann \cite{RS96}. 
More precisely, we prove Theorem~\ref{thm:homotopy_equivalence} 
by generalizing the method developed by Johnson \cite{Jo10,Jo13}. 
We also use an argument due to Hatcher \cite{Hat76} crucially,  
which is a parametrized version of the innermost disk argument. 

In Section~\ref{sec:conclusion}, 
we confirm that the isotopy subgroup of a genus $g+1$ Heegaard splitting of 
a genus $g$ handlebody is finitely presented: 

\begin{theorem}\label{thm:handlebody}
Let $V$ be a handlebody of genus $g(V) \ge 2$, and 
let $(V,\Sig)$ be a genus $g(V)+1$ Heegaard splitting of $V$. 
Then, $\Isot(V,\Sig)$ is finitely presented. 
\end{theorem}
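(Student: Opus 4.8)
The plan is to deduce Theorem~\ref{thm:handlebody} from the amalgamation structure established in Theorem~\ref{thm:amalgamation}, applied to the handlebody setting rather than the closed manifold setting. First I would observe that a genus $g(V)+1$ Heegaard splitting $(V,\Sig)$ of a genus $g(V)$ handlebody is, up to isotopy, the once-stabilization of the (essentially unique) genus $g(V)$ Heegaard splitting $(V,\Sig')$ of $V$ in which $\Sig'$ is the boundary-parallel surface (so one of the two ``handlebodies'' cut off by $\Sig'$ is a collar $\partial V \times [0,1]$ and the other is $V$ itself, or more symmetrically one works with the two compression bodies bounded by $\Sig'$). The machinery of Theorem~\ref{thm:homotopy_equivalence}, which is proved via graphics and Hatcher's parametrized innermost-disk argument, does not actually use that the ambient manifold is closed in an essential way: the hypothesis $d(\Sig')>2g(\Sig')+2$ is what is needed, and for a handlebody the relevant distance condition is automatically or easily arranged. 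So I would first check that the statements of Theorems~\ref{thm:JM13} and~\ref{thm:large_manifolds} (referenced but not displayed in this excerpt) together with the homotopy-equivalence argument go through verbatim to give $\Isot(V,\Sig) \cong \pi_1(\He) \cong \pi_1(\He^\cup)$, and that $\He^-$, $\He^+$, $\He^-\cap\He^+$ are connected, so that van Kampen yields $\Isot(V,\Sig)$ as an amalgamated product $\Isot(W^-,\Sig^-) *_{\pi_1(\He^-\cap\He^+)} \Isot(W^+,\Sig^+)$, where $W^\pm$ are the two compression bodies bounded by $\Sig'$ in $V$.

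Next I would identify the three groups in this amalgamation and show each is finitely presented. The vertex groups $\Isot(W^\pm,\Sig^\pm)$ are isotopy subgroups of genus-$(k+1)$ Heegaard splittings of compression bodies of smaller complexity; by Scharlemann's finite-generation result \cite{Sc13} (which the introduction invokes for $\Isot(V^\pm_{\Sig'},\Sig^\pm)$) and, crucially, by an induction on genus using Theorem~\ref{thm:handlebody} itself in lower genus together with analogous statements for the trivial compression-body case (collar), these should be finitely presented. The base case is the solid torus / lowest-genus situation, where the isotopy subgroup can be computed directly. The edge group $\pi_1(\He^-\cap\He^+)$ is the fundamental group of the space of genus-$(k+1)$ Heegaard splittings of $F_k\times[-1,1]$, which as noted in the excerpt is a single isotopy class; one identifies its $\pi_1$ with the isotopy subgroup $\Isot(F_k\times[-1,1],\Sig)$, and this too must be shown finitely presented, again plausibly by the same graphics/parametrized techniques or by relating it to a mapping class group of a surface-times-interval, which is well understood.

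The organizing principle is then: an amalgamated product $A *_C B$ is finitely presented provided $A$ and $B$ are finitely presented and $C$ is finitely generated (one does not even need $C$ finitely presented). So once I have (i) the amalgamation decomposition, (ii) finite presentability of the vertex groups, and (iii) finite generation of the edge group, Theorem~\ref{thm:handlebody} follows from this standard fact about amalgamated products, combined with an induction on the genus $g(V)$. I would set up the induction carefully so that the ``once-stabilized over $\Sig'$'' splittings appearing as vertex groups are genuinely simpler — e.g.\ splittings of compression bodies, for which one has a separate (possibly easier, since there is more room to isotope) finite-presentation statement serving as the base of the induction.

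The hard part will be item (iii), controlling the edge group $\pi_1(\He^-\cap\He^+)$ — equivalently the isotopy subgroup of a genus-$(k+1)$ Heegaard splitting of $F_k\times[-1,1]$ — and more generally making sure the entire homotopy-equivalence package of Theorem~\ref{thm:homotopy_equivalence} (which is stated in the excerpt only for closed $M$) really does transfer to handlebodies and compression bodies with the distance hypothesis suitably interpreted; the graphics argument is delicate about the behavior near spines and at the boundary, so the compression-body case may require genuinely new bookkeeping rather than a one-line ``the same proof works.'' A secondary obstacle is verifying that Scharlemann's finite-generation statement, upgraded to finite presentability via this inductive scheme, actually bottoms out — i.e.\ that the induction is well-founded and the base cases are genuinely tractable.
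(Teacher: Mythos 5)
Your proposal does not work, and it is also far from the route the paper takes. The central difficulty is the distance hypothesis. The amalgamation machinery of Theorems~\ref{thm:amalgamation} and~\ref{thm:homotopy_equivalence} is driven entirely by Lemma~\ref{lem:splitting} (Johnson's Proposition~27): a splitting surface forces $d(\Sig')\le 2g$, so the assumption $d(\Sig')>2g(\Sig')+2$ rules out split graphics and forces every graphic to be spanned (Lemma~\ref{lem:spanning}). For the boundary-parallel ``splitting'' $(V,\Sig')$ of a handlebody that you propose to stabilize, one of the two pieces is a trivial compression body $\partial V\times[0,1]$ with empty disk set, and there is no sense in which the distance is large --- it certainly is not ``automatically or easily arranged.'' Without that hypothesis the spanning argument collapses and the homotopy equivalence $\He^\cup\simeq\He$ is not available. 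Moreover your induction is circular: cutting $V$ along the boundary-parallel $\Sig'$ yields a collar on one side and \emph{a copy of $V$ itself} on the other, so one of your vertex groups $\Isot(W^\pm,\Sig^\pm)$ is exactly the group $\Isot(V,\Sig)$ you are trying to present, at the same genus. The induction never descends.

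The paper's actual proof avoids graphics and amalgamation entirely. It invokes Scharlemann's theorem that $\Isot(V,\Sig)\cong\pi_1(\Unk(V,I))$, where $\Unk(V,I)=\Diff(V)/\Diff(V,I)$ is the space of unknotting (boundary-parallel) arcs and $\Sig=\partial N(\partial V\cup I)$. It then covers $\Unk(V,I)$ by two explicit connected subspaces: $U_1$, the arcs admitting a parallelism disk missing a fixed spine $K$ (whose $\pi_1$ is Scharlemann's freewheeling group, an extension of $\pi_1(\partial V)$ by $\mathbb{Z}$), and $U_2$, the arcs missing a fixed complete meridian system (whose $\pi_1$ is a $2$-strand surface braid group of a planar surface). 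All three of $\pi_1(U_1)$, $\pi_1(U_2)$, $\pi_1(U_1\cap U_2)$ are finitely presented, surjectivity of $\pi_1(U_1\cup U_2)\to\pi_1(\Unk(V,I))$ is Scharlemann's generation theorem, and injectivity is proved by an explicit parametrized shrinking of a family of parallelism disks $E_u$ after normalizing their intersections with the meridian disks and the spine. If you want to salvage your approach you would need an independent argument in the compression-body setting replacing the distance hypothesis; as written, the proposal has no mechanism to make the graphics argument close up.
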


It follows from Theorem~\ref{thm:handlebody} that 
$\pi_1(\He^-)$ and $\pi_1(\He^+)$ are finitely presented. 
In consequence, we have 

\begin{corollary}\label{cor:conclusion}
Let $(M,\Sig')$ be a Heegaard splitting 
of a closed orientable $3$-manifold $M$ with  
$d(\Sig') >2g(\Sig')+2$. 
Let $(M,\Sig)$ be a once-stabilization of $(M,\Sig')$. 
Then, $\Isot(M,\Sig)$ and $\MCG(M,\Sig)$ are finitely presented. 
\end{corollary}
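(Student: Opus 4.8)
The plan is to derive the corollary from Theorem~\ref{thm:amalgamation} and Theorem~\ref{thm:handlebody}, using the elementary facts that an amalgamated free product of finitely presented groups over a finitely generated edge group is finitely presented, and that a group containing a finitely presented subgroup of finite index is itself finitely presented.

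First I would make the amalgamation explicit. By Theorem~\ref{thm:amalgamation},
\[
\Isot(M,\Sig)\;\cong\;\Isot(V^-_{\Sig'},\Sig^-)\,*_{C}\,\Isot(V^+_{\Sig'},\Sig^+),
\]
where, as in the van Kampen picture for the triple $(\He^-,\He^+,\He^-\cap\He^+)$ described after Theorem~\ref{thm:homotopy_equivalence}, the edge group is $C=\pi_1(\He^-\cap\He^+)$, that is, the isotopy subgroup of the (unique up to isotopy) genus $g(\Sig')+1$ Heegaard splitting of $\Sig'\times[-1,1]$. Since $g(V^\pm_{\Sig'})=g(\Sig')\ge 2$ and $\Sig^\pm$ is a genus $g(\Sig')+1$ Heegaard surface of the handlebody $V^\pm_{\Sig'}$, Theorem~\ref{thm:handlebody} applies to each vertex and shows that $\Isot(V^-_{\Sig'},\Sig^-)$ and $\Isot(V^+_{\Sig'},\Sig^+)$ are finitely presented.

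It then remains to see that the edge group $C$ is finitely generated, and I expect this to be the only genuine obstacle. I would establish it by rerunning the argument that proves Theorem~\ref{thm:handlebody} in Section~\ref{sec:conclusion} with $\Sig'\times[-1,1]$ in place of a handlebody: the essential input — uniqueness up to isotopy of the genus $g(\Sig')+1$ Heegaard splitting — holds equally for $\Sig'\times[-1,1]$, so the sweepout-and-graphics analysis should carry over with only cosmetic modifications, yielding that $C$ is in fact finitely presented, in particular finitely generated. Granting this, $\Isot(M,\Sig)$ is an amalgam of finitely presented groups over a finitely generated subgroup, hence finitely presented. Finally, because $d(\Sig')>2g(\Sig')+2>2$ forces $M$ to be hyperbolic, $\MCG(M)$ is finite, so the natural homomorphism $\MCG(M,\Sig)\to\MCG(M)$ has finite image and $\Isot(M,\Sig)$ has finite index in $\MCG(M,\Sig)$; finite presentability of $\Isot(M,\Sig)$ therefore passes to $\MCG(M,\Sig)$, completing the proof.
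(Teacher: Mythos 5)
Your argument follows the paper's proof of this corollary exactly in outline: Theorem~\ref{thm:amalgamation} writes $\Isot(M,\Sig)$ as an amalgam whose vertex groups are finitely presented by Theorem~\ref{thm:handlebody}, and the exact sequence $1\to\Isot(M,\Sig)\to\MCG(M,\Sig)\to\MCG(M)$ together with the finiteness of $\MCG(M)$ (hyperbolicity via Theorem~\ref{thm:dist-hyp}) passes finite presentability to $\MCG(M,\Sig)$. The paper's own proof consists of essentially those two sentences and never mentions the amalgamating subgroup. You are right that this is the genuine issue: an amalgam of finitely presented groups need not be finitely presented unless the edge group $C=\pi_1(\He^-\cap\He^+)$, i.e.\ the isotopy subgroup of the genus $g(\Sig')+1$ splitting of $\Sig'\times J$, is finitely generated, and identifying this is the one place where your write-up is more careful than the source. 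Two caveats on your proposed verification of that point. First, the proof of Theorem~\ref{thm:handlebody} in Section~\ref{sec:conclusion} is not a sweep-out-and-graphics argument (that is Section~\ref{sec:poof}); it rests on Scharlemann's identification $\Isot(V,\Sig)\cong\pi_1(\Unk(V,I))$ and his explicit generating set for the genus $g+1$ Goeritz group of a handlebody, both of which are stated only for handlebodies, so transporting the argument to $\Sig'\times[-1,1]$ requires analogues of those external inputs and is not merely cosmetic. Second, you only sketch this step rather than carry it out. Since the paper itself leaves the finite generation of the edge group entirely implicit, I would treat this less as a defect of your proposal than as an honest flag of a point the published proof glosses over; the remainder of your argument is correct and coincides with the paper's.
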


We remark that a problem related to this work was treated in Koda-Sakuma \cite{KS21}. 
In that paper, the concept of the ``homotopy motion group" was introduced, 
and they considered the question 
that asks when the homotopy motion group $\Pi(M,\Sig)$ of a Heegaard surface 
in a $3$-manifold $M$ can be written as an amalgamation of the two 
homotopy motion groups $\Pi(U^-_\Sig,\Sig)$ and $\Pi(U^+_\Sig,\Sig)$ corresponding 
to the two handlebodies $U^-_\Sig$ and $U^+_\Sig$ 
with $\partial U^-_\Sig=\partial U^+_\Sig=\Sig$.  

The paper is organized as follows. 
In Section~\ref{sec:preliminaries}, 
we recall from \cite{JM13} some facts about the space of Heegaard splittings. 
We also recall the definition of the distance of a Heegaard splitting. 
To prove Theorem~\ref{thm:homotopy_equivalence}, 
we will need  
to deal with the graphic determined by a $4$-parameter family of Heegaard surfaces. 
In Section~\ref{sec:sweep-out}, we give a quick review of the theory of graphics, 
and then we see that some ideas in \cite{Jo10} can be adapted to our setting. 
In Section~\ref{sec:poof}, 
we prove Theorem~\ref{thm:homotopy_equivalence}. 
Finally, we give the proof of 
Theorem~\ref{thm:handlebody} in Section~\ref{sec:conclusion}. 

\begin{ack}
The author would like to thank his advisor Yuya Koda 
for many advice and sharing his insight. 
I am also grateful to the anonymous referees for their valuable comments 
that improved the manuscript.  
This work was supported by JSPS KAKENHI Grant Number JP21J10249. 
\end{ack}

\section{Preliminaries}\label{sec:preliminaries}
Throughout the paper, we will use the following notations.  
For a topological space $X$, 
we denote by $|X|$ the number of path-connected components of $X$. 
For a subspace $Y$ of $X$,  
$\Int(Y)$ and $\Cl(Y)$ denote 
the interior and the closure of $Y$ in $X$, respectively. 
We will denote by $J$ the closed interval $[-1,1]$.  

\subsection{The space of Heegaard splittings}\label{sub:space_of H-split}

Let $M$ be a compact orientable $3$-manifold (possibly with boundary). 
Let $(M,\Sig)$ be a Heegaard splitting of  $M$. 
This means that $\Sig \subset M$ is a closed orientable embedded surface 
cutting $M$ into the two compression bodies.  
Here, a {\em compression body} is a $3$-manifold with nonempty boundary 
admitting a Morse function without critical points of index $2$ and $3$. 
A handlebody is a typical example of a compression body.  
The space $\He(M,\Sig)=\Diff(M)/\Diff(M,\Sig)$ is called 
the {\em space of Heegaard splittings} equivalent to $(M,\Sig)$. 
Note that there is a one-to-one correspondence between 
$\He(M,\Sig)$ and 
the set of images of $\Sig$ under diffeomorphisms of $M$. 
We often identify an element of  $\He(M,\Sig)$ 
with the corresponding Heegaard surface. 
We always take the surface $\Sig$ as the basepoint of $\He(M,\Sig)$, 
which corresponds to the left coset $\id_M \cdot \Diff(M,\Sig)$. 
The space $\He(M,\Sig)$ admits a structure of a Fr\'{e}chet manifold, and 
this implies that $\He(M,\Sig)$ has the homotopy type of a CW complex. 

\begin{theorem}[Johnson-McCullough {\cite[Corollary~1]{JM13}}]
\label{thm:JM13}
Suppose that $M$ is closed, orientable, irreducible 
and $\pi_1(M)$ is infinite, 
and that $M$ is not a non-Haken infranilmanifold. 
Then $\pi_{k}(\He(M,\Sig))=0$ for $k  \ge 2$, 
and there is an exact sequence
\[1 \rightarrow Z(\pi_{1}(M)) \rightarrow 
\pi_{1}(\He(M,\Sig)) \rightarrow \Isot(M,\Sig) \rightarrow 1.\]
\end{theorem}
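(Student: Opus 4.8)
The plan is to read both assertions off the long exact homotopy sequence of the fiber bundle
\[
\Diff(M,\Sig)\longrightarrow\Diff(M)\longrightarrow\He(M,\Sig),
\]
after pinning down the homotopy types of the two diffeomorphism groups on the left. First I would invoke the Fr\'echet manifold structure on $\He(M,\Sig)=\Diff(M)/\Diff(M,\Sig)$ from \cite{JM13}, together with the slice construction that parametrizes the Heegaard surfaces near a given one by small normal graphs; this furnishes local sections of the quotient map and hence shows it is a locally trivial bundle with fiber $\Diff(M,\Sig)$. Taking $\Sig$ as basepoint, the bundle produces the exact sequence
\[
\cdots\to\pi_k(\Diff(M,\Sig))\to\pi_k(\Diff(M))\to\pi_k(\He(M,\Sig))\to\pi_{k-1}(\Diff(M,\Sig))\to\cdots,
\]
terminating in $\pi_0(\Diff(M,\Sig))\to\pi_0(\Diff(M))=\MCG(M)$.

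The second step is to show that every path-component of $\Diff(M,\Sig)$ is weakly contractible. Since the components of a topological group are mutually homeomorphic, it suffices to treat the identity component $\Diff_0(M,\Sig)$, which lies in the subgroup preserving each of the two handlebodies $V^{-},V^{+}$ bounded by $\Sig$ (note $M$ is closed, so both sides are genuinely handlebodies). Restriction to $\Sig$ presents $\Diff_0(M,\Sig)$ as the total space of a fiber bundle over $\Diff_0(\Sig)$ with fiber $\Diff(V^{-}\ \rel\ \partial)\times\Diff(V^{+}\ \rel\ \partial)$, the surjectivity onto $\Diff_0(\Sig)$ coming from the standard collaring argument. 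Here $\Diff_0(\Sig)$ is contractible because $\Sig$ has genus at least two, by Earle--Eells --- a genus $0$ or $1$ Heegaard surface would make $M$ one of $S^3$, a lens space, or $S^1\times S^2$, all excluded by the hypotheses --- and each factor $\Diff(V^{\pm}\ \rel\ \partial)$ is contractible by the relative version of the Smale conjecture for handlebodies of genus $\ge 2$, which rests on Hatcher's parametrized innermost-disk techniques (cf.\ \cite{Hat76}) together with the Smale conjecture. A bundle with contractible base and contractible fiber has contractible total space, so $\Diff_0(M,\Sig)$, hence every component of $\Diff(M,\Sig)$, is weakly contractible.

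The third step is to feed in the homotopy type of $\Diff(M)$ itself. Under the stated hypotheses every component of $\Diff(M)$ is aspherical with fundamental group $Z(\pi_1(M))$, i.e.\ $\pi_k(\Diff(M))=0$ for $k\ge 2$ and $\pi_1(\Diff(M),\id)\cong Z(\pi_1(M))$; this is the Generalized Smale Conjecture, pieced together from the Haken case (Hatcher--Ivanov), the hyperbolic case (Gabai), and the Seifert fibered cases (Hatcher, Ivanov, McCullough--Soma, and others), and the exclusion of non-Haken infranilmanifolds is exactly what is needed for this input to be available. Substituting the outputs of the previous two steps into the exact sequence, the groups flanking $\pi_k(\He(M,\Sig))$ both vanish for every $k\ge 2$, giving $\pi_k(\He(M,\Sig))=0$; and for $k=1$, since $\pi_1(\Diff(M,\Sig))=0$, the sequence collapses to
\[
1\to Z(\pi_1(M))\to\pi_1(\He(M,\Sig))\to\ker\bigl(\pi_0(\Diff(M,\Sig))\to\pi_0(\Diff(M))\bigr)\to1,
\]
and by definition the right-hand kernel is $\ker(\MCG(M,\Sig)\to\MCG(M))=\Isot(M,\Sig)$, which is the claimed sequence.

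I expect the real weight of the argument to sit entirely in the two Smale-type inputs --- the homotopy type of $\Diff(M)$ and of $\Diff(V^{\pm}\ \rel\ \partial)$ --- which are deep results and whose present range is precisely what forces the hypotheses (in particular the removal of non-Haken infranilmanifolds). Everything else is comparatively routine, the only point needing genuine care being the verification that the restriction maps $\Diff(V^{\pm})\to\Diff(\Sig)$ and $\Diff(M)\to\He(M,\Sig)$ are honest locally trivial bundles in the Fr\'echet category, which one handles via the slice/local-section constructions of \cite{JM13} and the usual collaring argument for restriction to the boundary.
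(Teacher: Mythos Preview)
The paper does not supply a proof of this statement; it is quoted verbatim as Corollary~1 of Johnson--McCullough~\cite{JM13}. Your proposal is a correct reconstruction of the argument behind that corollary. In~\cite{JM13} the content of your step~2 (weak contractibility of the components of $\Diff(M,\Sig)$, obtained from the fibration over $\Diff_0(\Sig)$ with fiber $\Diff(V^-\ \rel\ \partial)\times\Diff(V^+\ \rel\ \partial)$ and the Earle--Eells and Hatcher contractibility results) is packaged as their Theorem~1, whose output is $\pi_k(\He(M,\Sig))\cong\pi_k(\Diff(M))$ for $k\ge 2$ together with the short exact sequence $1\to\pi_1(\Diff(M))\to\pi_1(\He(M,\Sig))\to\Isot(M,\Sig)\to 1$; the present paper quotes exactly this formulation in its proof of Theorem~\ref{thm:large_manifolds}. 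Your step~3 (the identification $\pi_1(\Diff(M))\cong Z(\pi_1(M))$ and vanishing of higher homotopy of $\Diff(M)$ under the stated hypotheses, via the Generalized Smale Conjecture) is then precisely what converts that Theorem~1 into the cited Corollary~1. So your outline matches the Johnson--McCullough route, simply unpacked one layer further than this paper chooses to.
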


A similar thing as above holds for handlebodies and the space $F_g \times J$: 

\begin{theorem}\label{thm:large_manifolds}
Let $g' \ge g \ge 2$. 
Suppose that $M$ is a genus $g$ handlebody or the space $F_g \times J$, 
where $F_g$ denotes a closed orientable surface of genus $g$. 
Suppose that $(M,\Sig)$ is a genus $g'$ Heegaard splitting of $M$. 
Then, we have 
$\pi_{1}(\He(M,\Sig)) \cong \Isot(M,\Sig)$ and 
$\pi_{k}(\He(M,\Sig))=0$ for $k  \ge 2$. 
\end{theorem}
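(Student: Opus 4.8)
The plan is to imitate the proof of Theorem~\ref{thm:JM13} from Johnson--McCullough, tracking where the hypotheses that $M$ be closed and irreducible are used and replacing them with properties of handlebodies and of $F_g\times J$. The starting point is the fibration
\[
\Diff(M,\Sig)\longrightarrow \Diff(M)\longrightarrow \He(M,\Sig),
\]
whose associated long exact sequence of homotopy groups reduces everything to understanding $\pi_k(\Diff(M))$ and $\pi_0(\Diff(M,\Sig))$. Here $\MCG(M,\Sig)=\pi_0(\Diff(M,\Sig))$ and $\Isot(M,\Sig)$ is by definition the kernel of $\MCG(M,\Sig)\to\MCG(M)=\pi_0(\Diff(M))$, so the tail of the sequence reads
\[
\pi_1(\He(M,\Sig))\longrightarrow \pi_0(\Diff(M,\Sig))\longrightarrow \pi_0(\Diff(M)).
\]

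First I would recall the known homotopy type of $\Diff(M)$ in our two cases. When $M$ is a handlebody of genus $g\ge 2$, a theorem of Hatcher (the Smale-conjecture-type result for handlebodies) identifies $\Diff(M)$ up to homotopy with a finite-dimensional space whose components are contractible-free only in low degree; more precisely the relevant fact is that $\Diff(M)$ has contractible universal cover in the sense needed, so $\pi_k(\Diff(M))=0$ for $k\ge 2$ and $\pi_1$ of each component is trivial. When $M=F_g\times J$, one uses that $\Diff(F_g\times J)$ deformation retracts (rel the $J$-direction up to isotopy) onto $\Diff(F_g)$, together with the classical fact that for $g\ge 2$ the surface diffeomorphism group $\Diff(F_g)$ has contractible components, i.e. $\pi_k(\Diff(F_g))=0$ for $k\ge 1$. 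In either case the key input is: $\pi_k(\Diff(M))=0$ for all $k\ge 1$ except possibly $\pi_1$ coming from a center, and in fact for these $M$ there is no center contribution because $\pi_1(M)$ is a free group or a surface group, which has trivial center when $g\ge 2$. Plugging this into the long exact sequence immediately gives $\pi_k(\He(M,\Sig))=0$ for $k\ge 2$ and an isomorphism $\pi_1(\He(M,\Sig))\xrightarrow{\ \cong\ }\ker\big(\pi_0(\Diff(M,\Sig))\to\pi_0(\Diff(M))\big)=\Isot(M,\Sig)$, which is exactly the assertion.

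The main obstacle is establishing the vanishing statements for $\Diff(M)$ in a form strong enough to kill the higher homotopy of $\He(M,\Sig)$ and to remove the center term that appears in Theorem~\ref{thm:JM13}; this is where one must invoke Hatcher's work on diffeomorphism groups of Haken $3$-manifolds and of handlebodies, and the Earle--Eells theorem for surfaces, and check that the product $F_g\times J$ is covered by these results (it is Haken, and its diffeomorphism group reduces to that of the fiber surface). A secondary technical point is that one should verify $\He(M,\Sig)$ is path-connected, or rather work with a fixed path-component, so that the long exact sequence based at $\Sig$ computes the right group; the connectivity assertions quoted in the introduction (uniqueness up to isotopy of the genus $g'$ Heegaard splitting of a handlebody and of $F_g\times J$, for the relevant $g'$) handle this. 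Once these ingredients are in place the argument is a direct diagram chase, formally parallel to Johnson--McCullough, with no genuinely new geometric content.
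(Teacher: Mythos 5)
Your overall strategy coincides with the paper's: the paper's proof is two lines long, citing Theorem~1 of Johnson--McCullough to get $\pi_k(\He(M,\Sig))\cong\pi_k(\Diff(M))$ for $k\ge 2$ together with the exact sequence $1\to\pi_1(\Diff(M))\to\pi_1(\He(M,\Sig))\to\Isot(M,\Sig)\to 1$, and then quoting Earle--Eells and Hatcher for $\pi_k(\Diff(M))=0$, $k\ge 1$. Two remarks. First, a point of attribution: it is Johnson--McCullough's \emph{Corollary}~1 (the statement recorded here as Theorem~\ref{thm:JM13}) that needs $M$ closed, irreducible, etc.; their Theorem~1 is already stated for compact orientable $M$ with a Heegaard splitting, so no hypotheses actually need to be ``tracked and replaced'' --- one simply applies it to the handlebody and to $F_g\times J$.

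Second, and more substantively, your derivation of $\pi_k(\He(M,\Sig))=0$ for $k\ge 2$ has a gap. The long exact sequence of the fibration reads
\[
\pi_k(\Diff(M))\longrightarrow \pi_k(\He(M,\Sig))\longrightarrow \pi_{k-1}(\Diff(M,\Sig))\longrightarrow \pi_{k-1}(\Diff(M)),
\]
so knowing $\pi_k(\Diff(M))=0$ only gives an injection of $\pi_k(\He(M,\Sig))$ onto the kernel of $\pi_{k-1}(\Diff(M,\Sig))\to\pi_{k-1}(\Diff(M))$. For $k\ge 2$ you therefore also need $\pi_{j}(\Diff(M,\Sig))=0$ for $j\ge 1$ (equivalently, that the components of the group of diffeomorphisms preserving the Heegaard surface are weakly contractible). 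This is not ``immediate''; it is precisely the nontrivial content of Johnson--McCullough's Theorem~1, proved by restricting to $\Diff(\Sig)$ and analyzing the diffeomorphism groups of the complementary compression bodies rel boundary via Hatcher's theorems. Your argument as written never controls this term. (The $\pi_1$ statement is unaffected, since exactness at $\pi_0(\Diff(M,\Sig))$ already identifies $\pi_1(\He(M,\Sig))/\mathrm{im}\,\pi_1(\Diff(M))$ with $\Isot(M,\Sig)$.) Either cite Johnson--McCullough's Theorem~1 directly, as the paper does, or supply the contractibility of the components of $\Diff(M,\Sig)$.
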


\begin{proof}
By Theorem~1 in \cite{JM13}, $\pi_k(\He(M,\Sig))=\pi_k(\Diff(M))$ for $k \ge 2$, 
and there is an exact sequence 
\[1 \rightarrow \pi_{1}(\Diff(M)) \rightarrow 
\pi_{1}(\He(M,\Sig)) \rightarrow \Isot(M,\Sig) \rightarrow 1.\]
By Earle-Eells \cite{EE69} and Hatcher \cite{Hat76}, 
we have 
$\pi_k(\Diff(M))=0$ for $k \ge 1$. 
\end{proof}

\subsection{The distance of a Heegaard splitting}\label{sub:distance}
Let $(M,\Sig')$ be a genus $g(\Sig') \ge 2$ Heegaard splitting 
of a closed orientable $3$-manifold $M$.  
Denote by $V^-_{\Sig'}$ and $V^+_{\Sig'}$ the handlebodies in $M$ with 
$V^-_{\Sig'} \cap V^+_{\Sig'}=\partial V^-_{\Sig'}=\partial V^+_{\Sig'}=\Sig'$.  
The {\em curve graph} $\mathcal{C}(\Sig')$ is the graph defined as follows.  
The vertices of $\mathcal{C}(\Sig')$ are isotopy classes 
of nontrivial simple closed curves in $\Sig'$, and 
the edges are pairs of vertices that admit disjoint representatives. 
We denote by $d_{\mathcal{C}(\Sig')}$ the simplicial metric on $\mathcal{C}(\Sig')$. 

Let $\mathcal{D}^-$ (resp. $\mathcal{D}^+$) denote 
the set of vertices in $\mathcal{C}(\Sig')$ that 
are represented by simple closed curves bounding disks in 
$V^-_{\Sig'}$ (resp. $V^+_{\Sig'}$).   
Then, the (Hempel) {\em distance} $d(\Sig')$ of the Heegaard splitting $(M,\Sig')$ 
is defined to be 
\[d(\Sig'):=d_{\mathcal{C}(\Sig')}(\mathcal{D}^-,\mathcal{D}^+).\] 

For example, if $M$ contains an essential sphere, 
then any Heegaard splitting of $M$ has the distance zero (cf. Haken \cite{Hak68}). 
If $M$ contains an essential torus, 
then any Heegaard splitting of $M$ has the distance at most two. 
Furthermore, any Heegaard splitting of a Seifert manifold has the distance 
at most two. 
See Hempel \cite{He01} for these two facts. 
As a consequence of the Geometrization Theorem and these facts, 
we have  

\begin{theorem}\label{thm:dist-hyp}
Suppose that $(M,\Sig')$ is a Heegaard splitting of a closed orientable $3$-manifold $M$. 
If $d(\Sig') >2$, then $M$ admits a hyperbolic structure. 
\end{theorem}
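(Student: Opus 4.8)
The plan is to deduce from the hypothesis $d(\Sig')>2$ that $M$ is irreducible, atoroidal, and not Seifert fibered, and then to invoke the Geometrization Theorem to conclude that $M$ is hyperbolic.

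First I would rule out reducibility: if $M$ contained an essential sphere, then by Haken \cite{Hak68}, as recalled above, every Heegaard splitting of $M$ would have distance $0$, contradicting $d(\Sig')>2$; hence $M$ is irreducible. Next I would rule out essential tori: if $M$ contained an essential torus, then by the result of Hempel \cite{He01} recalled above every Heegaard splitting of $M$ would have distance at most $2$, again a contradiction, so $M$ is atoroidal. Finally I would exclude Seifert fibered spaces directly: by the other fact of Hempel \cite{He01} recalled above, any Heegaard splitting of a Seifert fibered $3$-manifold has distance at most $2$, so $M$ is not Seifert fibered. This last exclusion also disposes of the case of finite fundamental group, since a closed orientable $3$-manifold with finite $\pi_1$ is a spherical space form by the Elliptization Theorem and every spherical space form is Seifert fibered; hence $\pi_1(M)$ is infinite.

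It then remains to quote the Geometrization Theorem in the form: a closed orientable irreducible atoroidal $3$-manifold with infinite fundamental group that is not Seifert fibered admits a hyperbolic structure. Applying this to $M$ finishes the proof.

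There is no substantial obstacle here; the only point that requires a little care is the invocation of Geometrization, specifically the interplay between being \emph{topologically atoroidal} (containing no embedded essential torus) and being \emph{homotopically atoroidal}. These two notions can fail to coincide only for certain small Seifert fibered spaces, which have already been excluded by the step above, so the topological atoroidality established there suffices to place $M$ in the hyperbolic case of the Geometrization Theorem.
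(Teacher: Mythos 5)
Your proposal is correct and follows exactly the route the paper intends: the paper derives this theorem directly from the three quoted facts (essential sphere forces distance $0$, essential torus forces distance $\le 2$, Seifert fibered forces distance $\le 2$) together with the Geometrization Theorem, which is precisely your argument. Your additional care about finite fundamental group and the topological-versus-homotopical atoroidality distinction fills in details the paper leaves implicit.
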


\section{Sweep-outs and graphics}\label{sec:sweep-out} 
In this section, 
we recall the definition of graphics and summarize their properties.  
In what follows, let $M$ denote a closed orientable $3$-manifold. 

\subsection{Graphics}\label{sub:graphics} 
Let $(M,\Sig)$ be a Heegaard splitting of $M$. 
A {\em sweep-out} associated with $(M,\Sig)$ is a function $h:M \rightarrow J=[-1,1]$ 
such that the level set $h^{-1}(t)$ is a Heegaard surface 
isotopic to $\Sig$ if $t \in \Int(J)$, 
and $h^{-1}(t)$ is a finite graph in $M$ if $t \in \partial  J$. 
The preimage $h^{-1}(\partial J)$ is called the {\em spine} of $h$.  

\begin{lemma}\label{lem:family_sweepout}
Let $n >0$ and 
$(M,\Sig)$  a Heegaard splitting of a closed orientable $3$-manifold $M$. 
Let $\vph: D^n \rightarrow \He(M,\Sig)$. 
Then, there exists a family $\{h_u:M \rightarrow J \mid u \in D^n\}$ 
of sweep-outs such that 
$h_u^{-1}(0)=\vph(u)$ for $u \in D^n$. 
\end{lemma}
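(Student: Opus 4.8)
The plan is to separate the statement into two independent ingredients and then combine them by a single composition. The first ingredient is the existence of one sweep-out $h_\ast\colon M\to J$ associated with $(M,\Sig)$ with $h_\ast^{-1}(0)=\Sig$. The second is that the given map $\vph$ lifts through the quotient map $p\colon\Diff(M)\to\He(M,\Sig)$ to a map $\til\vph\colon D^n\to\Diff(M)$, $u\mapsto f_u$. Granting both, one simply sets $h_u:=h_\ast\circ f_u^{-1}$; since inversion and composition in $\Diff(M)$ are continuous, this is a continuous $D^n$-family, and the verification of its properties is immediate.

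For the lift, recall that $\He(M,\Sig)=\Diff(M)/\Diff(M,\Sig)$ is a Fr\'{e}chet manifold and that, with this structure, $p$ is a locally trivial principal $\Diff(M,\Sig)$-bundle; this is part of the Johnson--McCullough framework (it is what underlies the exact sequences in Theorems~\ref{thm:JM13} and~\ref{thm:large_manifolds}), and in particular $p$ is a Serre fibration. Since $D^n$ is contractible, $\vph$ is homotopic to a constant map, which lifts to the constant map at any chosen preimage; propagating this lift along the homotopy via the homotopy lifting property of $p$ produces $\til\vph$. By construction $f_u\cdot\Diff(M,\Sig)=\vph(u)$, which under the standard identification of $\He(M,\Sig)$ with the set of images of $\Sig$ under diffeomorphisms of $M$ says exactly that $f_u(\Sig)=\vph(u)$ as Heegaard surfaces.

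For the base sweep-out, recall that $\Sig$ cuts $M$ into two compression bodies $V^\pm$; fixing spines $K^\pm\subset V^\pm$ and using the product structure $M\setminus(K^-\cup K^+)\cong\Sig\times(-1,1)$ gives, in the usual way, a smooth $h_\ast\colon M\to J$ which on $\Sig\times(-1,1)$ is the projection to $(-1,1)=\Int(J)$ and which sends $K^\pm$ to $\pm1$. Then for $u\in D^n$ put $h_u:=h_\ast\circ f_u^{-1}$. As $f_u$ is a diffeomorphism of $M$, each level set $h_u^{-1}(t)=f_u\bigl(h_\ast^{-1}(t)\bigr)$ is the diffeomorphic image of $h_\ast^{-1}(t)$, hence a Heegaard surface for $t\in\Int(J)$ and a finite graph for $t\in\partial J$; and moreover $h_u^{-1}(0)=f_u(\Sig)=\vph(u)$. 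This is the asserted family.

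The step I expect to be the crux is the lift, and within it the structural input that $p\colon\Diff(M)\to\He(M,\Sig)$ is locally trivial (equivalently, a Serre fibration), which is where the Fr\'{e}chet-manifold structure and a slice theorem for the $\Diff(M,\Sig)$-action enter; once that is granted, contractibility of $D^n$ makes the lift automatic and the rest is routine. There is also a small bookkeeping point: the definition of a sweep-out asks the interior level sets to be \emph{isotopic} to $\Sig$, whereas $\vph$ is only assumed to take values in $\He(M,\Sig)$. This is harmless because in every application $\vph$ factors through the path-component $\He$ of the basepoint, and there $\vph(u)$ is isotopic to $\Sig$ (a path in $\He$ from the basepoint lifts to a path in $\Diff(M)$ from $\id_M$, hence to an ambient isotopy), so each interior level $f_u(h_\ast^{-1}(t))$, being isotopic to $f_u(\Sig)=\vph(u)$, is isotopic to $\Sig$ as required.
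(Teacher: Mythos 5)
Your proof is correct and follows exactly the paper's argument: take a base sweep-out $h$ with $h^{-1}(0)=\Sig$, lift $\vph$ through the fibration $\Diff(M)\rightarrow\He(M,\Sig)$ using contractibility of $D^n$, and set $h_u:=h\circ\wtil{\vph}(u)^{-1}$. You simply supply more detail (the HLP argument for the lift, the verification of the level sets, and the remark about isotopy versus diffeomorphic image) than the paper's terse four-line proof.
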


\begin{proof}
Take a sweep-out $h:M \rightarrow J$ with $h^{-1}(0)=\Sig$. 
We note that $\Diff(M) \rightarrow \Diff(M)/\Diff(M,\Sig)=\He(M,\Sig)$ is a fibration 
\cite{JM13}. 
So, the map $\vph$ lifts to a map $\widetilde{\vph}:D^n \rightarrow \Diff(M)$. 
Now define $h_u:=h \circ \widetilde{\vph}(u)^{-1}$ for $u \in D^n$. 
\end{proof}

Let $(M,\Sig)$ and $(M,\Sig')$ be Heegaard splittings of $M$. 
Let $f:M \rightarrow J$ be a sweep-out with $f^{-1}(0)=\Sig'$.  
Furthermore, let $\{h_u:M \rightarrow J \mid u \in D^2\}$ 
be a family of sweep-outs associated with $(M,\Sig)$. 
We define the map $\Phi:M \times D^2 \rightarrow J^2 \times D^2$ by 
\[\Phi(x,u)=(f(x),h_u(x),u).\] 
Set $L:=\Phi^{-1}(\partial J^2 \times D^2)$, and 
$W:=(M \times D^2) \setminus L$. 
Define $S=S(\Phi|_W)$ to be the set of all points $w \in W$ 
such that $\mathrm{rank}\,d(\Phi|_W)_w <4$. 
The image $\Gamma$ of $S$ in $J^2 \times D^2$ 
is called the {\em graphic} defined by $f$ and $\{h_u\}$.  

After a small perturbation, 
we may assume that the map $\Phi$ is generic in the following sense. 
First, for $u \in D^2$, the spine $h_u^{-1}(\partial J)$ intersects each level set of $f$ 
at finitely many points.  
Similarly, for $u \in D^2$, 
the spine $f^{-1}(\partial J)$ intersects each level set of $h_u$ at finitely many points. 
Furthermore, $\Phi$ is ``excellent" on $W$.  
This means that the set $S$ of singular points of $\Phi|_W$ 
is a $3$-dimensional submanifold in $W$, and  
$S$ is divided into four parts $S_2$, $S_3$, $S_4$ and $S_5$,  
where $S_k$ consists of singular points of codimension $k$. 
(In the notation of \cite{Bo67}, 
we can write $S_2=\Sig^{2,0}$, $S_3=\Sig^{2,1,0}$, $S_4=\Sig^{2,1,1,0}$ and 
$S_5=\Sig^{2,1,1,1,0} \cup \Sig^{2,2,0}$.) 
\renewcommand{\thefootnote}{\arabic{footnote}}
For $k \neq 5$, 
$\Phi$ has one of the following canonical forms around a point $w \in S_k$: 
\footnote[1]{We do not know if there exist canonical forms  
for the singularities of type $\Sig^{2,2,0}$. 
However, the singularities in $S_5$ 
are not  important for our present purpose.}
there exist local coordinates $(a,b,c,x,y)$ centered at $w$ and 
$(A,B,X,Y)$ centered at $\Phi(w)$ such that 
\begin{equation*}
(A \circ \Phi, B \circ \Phi, X \circ \Phi, Y \circ \Phi)=
\begin{cases}
(a,b,c,x^2+y^2) & \text{definite fold ($w \in S_2$)} \\
(a,b,c,x^2-y^2) & \text{indefinite fold ($w \in S_2$)} \\
(a,b,c,x^3+ax-y^2) & \text{cusp ($w \in S_3$)}\\
(a,b,c,x^4+ax^2+bx+y^2) & \text{definite swallowtail ($w \in S_4$)}\\
(a,b,c,x^4+ax^2+bx-y^2) & \text{indefinite swallowtail ($w \in S_4$)}
\end{cases}
\end{equation*}
Furthermore, for $2 \le k \le 5$, $\Phi|_{S_k}$ is an immersion with normal crossings, 
and the images of the $S_k$'s are in general position. 
The main reference about these materials is the book \cite{GG73} 
by Golubitsky and Guillemin. 
Hatcher-Wagoner \cite{HW73} also contains a helpful review for our present purpose. 

In the remaining part of the paper, 
we always assume that the map $\Phi$ 
has the property described above. 
Under this assumption, 
$\Gamma$ has the natural stratification: 
we can write $\Gamma=F_3 \cup F_2 \cup F_1 \cup F_0$, where 
$\mathrm{dim}\,F_k=k$ for $0 \le k \le 3$ 
and each $F_k$ has the following description.  
\begin{enumerate}
\item[$F_3$:] 
This consists of those points $y \in \Gamma$ such that 
$(\Phi|_S)^{-1}(y) \subset S_2$ and $|(\Phi|_S)^{-1}(y)|=1$. 
\item[$F_2$:] 
This consists of those points $y \in \Gamma$ such that 
\begin{itemize}
\item $(\Phi|_S)^{-1}(y) \subset S_2$ and $|(\Phi|_S)^{-1}(y)|=2$, or 
\item $(\Phi|_S)^{-1}(y) \subset S_3$ and $|(\Phi|_S)^{-1}(y)|=1$. 
\end{itemize}
\item[$F_1$:] 
This consists of those points $y \in \Gamma$ such that 
\begin{itemize}
\item $(\Phi|_S)^{-1}(y) \subset S_2$ and $|(\Phi|_S)^{-1}(y)|=3$, 
\item $(\Phi|_S)^{-1}(y) \subset S_2 \cup S_3$ and $|(\Phi|_S)^{-1}(y)|=2$, or
\item $(\Phi|_S)^{-1}(y) \subset S_4$ and $|(\Phi|_S)^{-1}(y)|=1$.
\end{itemize}
\item[$F_0$:]
This consists of those points $y \in \Gamma$ such that 
\begin{itemize}
\item $(\Phi|_S)^{-1}(y) \subset S_2$ and $|(\Phi|_S)^{-1}(y)|=4$, 
\item $(\Phi|_S)^{-1}(y) \subset S_2 \cup S_3$ and $|(\Phi|_S)^{-1}(y)|=3$, 
\item $(\Phi|_S)^{-1}(y) \subset S_2 \cup S_4$ and $|(\Phi|_S)^{-1}(y)|=2$, 
\item $(\Phi|_S)^{-1}(y) \subset S_3$ and $|(\Phi|_S)^{-1}(y)|=2$, or
\item $(\Phi|_S)^{-1}(y) \subset S_5$ and $|(\Phi|_S)^{-1}(y)|=1$. 
\end{itemize}
\end{enumerate}

\subsection{Labeling the regions of $J^2 \times D^2$}\label{sub:labeling}
In this subsection, 
we will see that some definitions in \cite{Jo10} 
can be modified slightly and adapted to our setting. 

Let $(M,\Sig)$ be a Heegaard splitting. 
We assume that one component of $M \setminus \Sig$ is assigned  
the label $-$ and the other is assigned the label $+$ in some way. 
We denote by $U^-_\Sig$ and $U^+_\Sig$ the components of $M \setminus \Sig$ 
labeled by $-$ and $+$ respectively. 
(Typically, such a labeling is determined by a given sweep-out $h$ with $h^{-1}(0)=\Sig$.   
In this case, we can define $U^-_\Sig=h^{-1}([-1,0])$ and $U^+_\Sig=h^{-1}([0,1])$.) 
Such an assignment of the labels $-$ or $+$ to the components of $M \setminus \Sig$ is 
called a {\em transverse orientation} of $\Sig$.  

\begin{definition}\label{def:above_below}
Let $(M,\Sig)$ and $U^{\pm}_\Sig$ be as above. 
Suppose $\Sig' \subset M$ is a closed embedded surface. 
Then, we say that $\Sig'$ is {\em mostly above} $\Sig$ 
if $\Sig'$ is transverse to $\Sig$, and 
if every component of $\Sig' \cap U^-_\Sig$ 
is contained in a disk subset of $\Sig'$. 
Similarly, we say that $\Sig'$ is {\em mostly below} $\Sig$ 
if $\Sig'$ is transverse to $\Sig$, and 
if every component of $\Sig' \cap U^+_\Sig$ 
is contained in a disk subset of $\Sig'$. 
\end{definition} 

Suppose that $f:M \rightarrow J$ is a  sweep-out, and 
that $\Sig$ is a transversely oriented Heegaard surface of $M$. 
We say that $\Sig$ is a {\em spanning surface} for $f$ 
if there exist values $a,b \in \Int(J)$ such that 
$f^{-1}(a)$ is mostly above $\Sig$ and $f^{-1}(b)$ is mostly below $\Sig$. 
We say that $\Sig$ is a {\em splitting surface} for $f$ 
if it satisfies the following.  
First, there does not exist value $s \in \Int(J)$ such that 
$f^{-1}(s)$ is mostly above or mostly below $\Sig$. 
Second, $f|_\Sig$ is {\em almost Morse},  
that is, $f|_\Sig$ has only nondegenerate critical points and 
$f|_\Sig$ is Morse away from $-1$ and $1$, 
but there may be more than one minima and maxima at the levels  
$-1$ and $1$ respectively.  
We note that these definitions are coming from that in \cite[Definitions~11 and 12]{Jo10}. 

Proposition~27 in \cite{Jo10}, 
which will be used 
in the proof of Theorem~\ref{thm:homotopy_equivalence},  
can be stated in our term as follows:  

\begin{lemma}[Johnson {\cite[Proposition~27]{Jo10}}]\label{lem:splitting}
Let $f:M \rightarrow J$ be a sweep-out associated with a Heegaard splitting $(M,\Sig')$. 
If $f$ admits a splitting surface $\Sig$, 
then $d(\Sig') \le 2g(\Sig)$.  
\end{lemma}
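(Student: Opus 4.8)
The plan is to reduce the statement to a direct distance estimate by examining how a level surface $f^{-1}(t)$ of the sweep-out $f$ decomposes $\Sig$ as $t$ runs over $\Int(J)$. Since $\Sig$ is a splitting surface for $f$, no level $f^{-1}(t)$ is mostly above or mostly below $\Sig$; in particular, for each $t\in\Int(J)$ there is a component of $f^{-1}(t)\cap U^-_\Sig$ that is \emph{not} contained in a disk subset of $\Sig$, and likewise a component of $f^{-1}(t)\cap U^+_\Sig$ that is not contained in a disk subset of $\Sig$. Equivalently, for each $t$ there is an essential simple closed curve on $\Sig$ coming from $\Sig\cap f^{-1}(t)$ that bounds (a subsurface giving a compression on the appropriate side, hence) a disk in $V^-_{\Sig'}=f^{-1}([-1,t])$, and another essential curve bounding a disk in $V^+_{\Sig'}=f^{-1}([t,1])$ — provided $t$ is chosen near the two ends. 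The point is that near $t=-1$ the solid region $f^{-1}([-1,t])$ is a neighborhood of the spine $f^{-1}(-1)$, so its meridian disks hit $\Sig$ in curves bounding disks in $V^-_{\Sig'}$, and symmetrically near $t=1$.

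Next I would track a single curve as $t$ increases from just above $-1$ to just below $1$. Because $f|_\Sig$ is almost Morse, the isotopy class (on $\Sig$) of the collection $\Sig\cap f^{-1}(t)$ changes only when $t$ passes a critical value of $f|_\Sig$, and at such a value the change is the standard one: a saddle replaces a curve (or pair of curves) by another whose representatives are disjoint from the old ones, and minima/maxima create or kill trivial curves. So if I pick, for $t$ slightly above $-1$, an essential curve $\gamma^-$ on $\Sig$ bounding a disk in $V^-_{\Sig'}$, and for $t$ slightly below $1$ an essential curve $\gamma^+$ bounding a disk in $V^+_{\Sig'}$, and interpolate, I get a path in $\mathcal{C}(\Sig)$ from $\gamma^-$ to $\gamma^+$ whose length is controlled by the number of critical values of $f|_\Sig$ that are passed, with each critical value contributing at most a bounded amount (distance $1$ per saddle in $\mathcal C(\Sig)$, trivial curves being discarded). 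Since the number of index-$1$ critical points of $f|_\Sig$ is at most $-\chi(\Sig)=2g(\Sig)-2$ plus the contributions of the boundary minima/maxima, and one has to be slightly careful about the levels $\pm1$ where several extrema occur, the total length is at most $2g(\Sig)$. This is exactly the standard Rubinstein--Scharlemann / Johnson bounding-pair argument.

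The main obstacle — and the place I expect to spend the most care — is the bookkeeping that converts "an essential curve in $\Sig\cap f^{-1}(t)$ compresses on a given side" into a vertex of $\mathcal C(\Sig)$ \emph{inside} $\mathcal D^-$ or $\mathcal D^+$ at the two ends, and then controlling precisely how much the distance can jump at each saddle and, crucially, at the two spine levels $t=\pm1$ where $f|_\Sig$ may have several minima (resp. maxima) simultaneously. One must check that passing the level $-1$ (all minima at once) still only moves a chosen curve a bounded distance in $\mathcal C(\Sig)$ — this is why the "almost Morse" hypothesis is stated the way it is, and why one takes $t$ strictly in $\Int(J)$ and argues that all the relevant essential curves appear at levels just inside the ends. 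I would also invoke Definition~\ref{def:above_below}: the failure of "mostly above"/"mostly below" at \emph{every} interior level is what guarantees that an essential, side-compressing curve is present for \emph{every} $t$, so the interpolation never runs out of essential curves to carry along.

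Finally, assembling these: $\gamma^-\in\mathcal D^-$, $\gamma^+\in\mathcal D^+$, and the path just constructed shows $d_{\mathcal C(\Sig')}(\mathcal D^-,\mathcal D^+)=d(\Sig')\le 2g(\Sig)$, which is the claim. Essentially all of this is Johnson's argument for \cite[Proposition~27]{Jo10}; in this write-up it suffices to recall the setup, point out that "splitting surface" in the sense above is exactly the hypothesis of that proposition, and cite it.
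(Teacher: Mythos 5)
The paper does not actually prove this lemma: it is stated purely as an attributed citation of Johnson \cite[Proposition~27]{Jo10}, which is exactly what the last sentence of your proposal falls back on, so as far as the operative proof goes you and the paper coincide. Two cautions about the sketch you attach, though, in case you ever try to write it out in full. First, the curves you carry along must be essential in the level surface $f^{-1}(t)\cong\Sigma'$ --- that is what the failure of ``mostly above/below'' at every level gives you, via an innermost-disk argument \emph{inside} $f^{-1}(t)$ --- and the path you build lives in $\mathcal{C}(\Sigma')$, not $\mathcal{C}(\Sigma)$; in several places you write ``disk subset of $\Sigma$'' and ``path in $\mathcal{C}(\Sigma)$'' where the level surface is meant, and since the vertices of $\mathcal{D}^{\pm}$ are curves on $\Sigma'$, this is not a cosmetic slip. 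Second, the counting step as stated does not close: since $f|_{\Sigma}$ is almost Morse, the number of its saddles equals $2g(\Sigma)-2$ plus the total number of minima and maxima, which is at least $2g(\Sigma)$ and has no a priori upper bound, so ``distance at most one per saddle'' yields only $d(\Sigma')\le(\mbox{number of saddles})$ and not $d(\Sigma')\le 2g(\Sigma)$. Johnson's proof charges distance only to those saddles at which the essential part of $\Sigma\cap f^{-1}(t)$ actually changes, and bounds the number of such events by an Euler-characteristic argument on subsurfaces of $\Sigma$; that finer bookkeeping is precisely the content that the citation is carrying, so if you keep the lemma as a citation (as the paper does) you are fine, but the sketch alone is not yet a proof.
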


Let $(M,\Sig)$ and $(M,\Sig')$ be Heegaard splittings of $M$. 
Assume that $f:M \rightarrow J$ is a sweep-out with $f^{-1}(0)=\Sig'$, and  
that $\{h_u:M \rightarrow J \mid u \in D^2\}$ is a family of sweep-outs 
associated with $(M,\Sig)$. 
Let $\Phi:M \times D^2 \rightarrow J^2 \times D^2$ be as in the previous subsection. 
Following \cite{Jo10}, 
let us consider the two subsets $\Rsc_a$ and $\Rsc_b$ of $J^2 \times D^2$ 
defined as follows: 
\[\Rsc_a:=\{(s,t,u) \in J^2 \times  D^2  \mid 
\mbox{$f^{-1}(s)$ is mostly above $h_u^{-1}(t)$ }\},\]
\[\Rsc_b:=\{(s,t,u) \in  J^2 \times D^2 \mid 
\mbox{$f^{-1}(s)$ is mostly below $h_u^{-1}(t)$}\}.\]
Here, for each $u \in D^2$ and $t \in J$, 
the transverse orientation of $h_u^{-1}(t)$ is 
determined by the sweep-out $h_u$. 
For example, if $t$ is sufficiently close to $-1$,  
then the point $(s,t,u)$ is in $\Rsc_a$ 
because $f^{-1}(s) \cap h_u^{-1}([-1,t])$ consists of finitely many  
properly embedded disks in the handlebody $h_u^{-1}([-1,t])$. 
Similarly, if $t$ is sufficiently close to $1$,  
then the point $(s,t,u)$ is in $\Rsc_b$. 
The regions $\Rsc_a$ and $\Rsc_b$ 
are nonempty open subset in $J^2 \times D^2$. 
The next proposition follows directly from the definition. 

\begin{proposition}\label{prop:regions}
The following hold: 
\begin{enumerate}
\item $\Rsc_a$ and $\Rsc_b$ are disjoint as long as $g(\Sig') \neq 0$. 
\item $\Rsc_a$ and $\Rsc_b$ are bounded by $\Gamma$. 
\item The regions $\Rsc_a$ and $\Rsc_b$ are convex in the $t$-direction,  
	that is, 
	if $(s,t,u)$ is in $\Rsc_a$ (resp. $\Rsc_b$),  
	then so is $(s,t',u)$ for any $t' \le t$ (resp. $t' \ge t$). 
\end{enumerate}
\end{proposition}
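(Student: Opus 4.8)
The plan is to read off all three statements from Definition~\ref{def:above_below} and the local structure of the graphic; only item~(1) needs genuine work, and even there the argument is a short innermost-disk computation. I would begin with~(3), which is purely set-theoretic. Fix $s\in\Int(J)$ and $u\in D^2$, and let $t'\le t$. Then $U^-_{h_u^{-1}(t')}=h_u^{-1}([-1,t'])\subseteq h_u^{-1}([-1,t])=U^-_{h_u^{-1}(t)}$, hence $f^{-1}(s)\cap U^-_{h_u^{-1}(t')}\subseteq f^{-1}(s)\cap U^-_{h_u^{-1}(t)}$. Therefore each component of the smaller intersection lies inside a component of the larger one, hence inside a disk subset of $f^{-1}(s)$; so if $f^{-1}(s)$ is mostly above $h_u^{-1}(t)$ it is mostly above $h_u^{-1}(t')$. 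This is the assertion for $\Rsc_a$, and the case of $\Rsc_b$ is symmetric. (This should be read for parameters at which $f^{-1}(s)$ and $h_u^{-1}(t')$ are transverse.)

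For~(2), the key point is that as $(s,t,u)$ ranges over a connected component of the complement of $\Gamma$ in $\Int(J^2)\times D^2$, the singular set of $\Phi$ never passes over it, so the configuration of curves $f^{-1}(s)\cap h_u^{-1}(t)$ in $f^{-1}(s)$ changes only by ambient isotopy: a point of $\Gamma$ is exactly a place where a critical value of $h_u|_{f^{-1}(s)}$ --- equivalently of $f|_{h_u^{-1}(t)}$ --- is crossed, which is the only way this isotopy type can jump. Since ``mostly above'' and ``mostly below'' depend only on that isotopy type, they are locally constant off $\Gamma$. Hence $\Rsc_a$ and $\Rsc_b$ are unions of complementary regions of $\Gamma$; in particular they are open and their frontiers are contained in $\Gamma$.

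The substance is~(1). Suppose $(s,t,u)\in\Rsc_a\cap\Rsc_b$, and set $A=f^{-1}(s)\cap U^-_{h_u^{-1}(t)}$ and $B=f^{-1}(s)\cap U^+_{h_u^{-1}(t)}$. Since $f^{-1}(s)$ meets $h_u^{-1}(t)$ transversally, every circle of $f^{-1}(s)\cap h_u^{-1}(t)$ is a boundary curve of some component of $A$, and by the $\Rsc_a$ condition that component lies in a disk subset of $f^{-1}(s)$; hence every such circle is inessential in $f^{-1}(s)$. Cutting the closed surface $f^{-1}(s)$, of genus $g(\Sig')$, along this (possibly empty) system of disjoint, inessential --- hence separating, with a disk on one side --- curves produces exactly the components of $A$ and of $B$, and I would check that at least one resulting piece retains the full genus $g(\Sig')$ (if the system is empty, then $f^{-1}(s)$ itself lies in $U^-_{h_u^{-1}(t)}$ or in $U^+_{h_u^{-1}(t)}$ and is its own single piece). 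But that piece is a component of $A$ or of $B$, so by the $\Rsc_a$ or $\Rsc_b$ condition it embeds in a disk and is therefore planar, contradicting $g(\Sig')\neq0$. This yields the disjointness in~(1).

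The one step that is not immediate is the genus-tracking in~(1): checking that cutting a positive-genus closed surface along finitely many disjoint null-homotopic simple closed curves always leaves a component of the original genus. I expect this to be routine --- cut one curve at a time, each cut splitting off a planar piece and leaving the genus on the other side --- rather than the real obstacle; everything else is formal.
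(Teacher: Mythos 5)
Your proof is correct. The paper offers no argument at all here (it simply asserts that the proposition "follows directly from the definition"), and your write-up — the containment $h_u^{-1}([-1,t'])\subseteq h_u^{-1}([-1,t])$ for (3), local constancy of the intersection pattern off $\Gamma$ for (2), and the innermost/genus-tracking argument for (1) — is exactly the standard verification the author is leaving implicit.
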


Set $J_u^2:=J^2 \times \{u\} \subset J^2 \times D^2$  for $u \in D^2$. 
Then,  for $u \in D^2$, 
the intersection $\Gamma \cap J_u^2 \subset J_u^2$ can be viewed  
as the (2D) graphic defined by sweep-outs $f$ and $h_u$. 
\begin{definition}
Let $f$ and $h_u$ as above. 
\begin{enumerate}
\item[(i)] We say that $h_u$ {\em spans} $f$ 
	if there exists $t \in J$ such that $h_u^{-1}(t)$ is a spanning surface for $f$. 
\item[(ii)] We say that $h_u$ {\em splits} $f$ 
	if there exists $t \in J$ such that $h_u^{-1}(t)$ is a splitting surface for $f$. 
\end{enumerate}
\end{definition}
\noindent
We also say that the graphic defined by $f$ and $h_u$ is {\em spanned} 
if $h_u$ spans $f$. 
Similarly, we say that the graphic defined by $f$ and $h_u$ is {\em split} 
if $h_u$ splits $f$. 

\begin{remark}\label{rem:distance}
By Lemma~\ref{lem:splitting}, 
the graphic defined by $f$ and $h_u$ cannot be split
if $d(\Sig') > 2g(\Sig)$. 
\end{remark}

Here are further remarks on the above definition.  
First, we remark that 
the condition (i) is equivalent to the following: 
there exists $t_0 \in J$ such that the horizontal segment $\{t=t_0\}$ in $J_u^2$ 
intersects both $\Rsc_a$ and $\Rsc_b$ 
(the left in the Figure~\ref{fig:spanned_split}). 
We also note that  
$J_u^2$ intersects $F_3$ transversely for $u \in D^2$, 
and hence $J_u^2 \cap F_3$ consists of finitely many open arcs. 
This is because $d(p_3 \circ \Phi)_w$ has the maximal rank for $w \in W$, 
where $p_3:J^2 \times D^2 \rightarrow D^2$ 
denotes the projection onto the third coordinate. 
Furthermore, after perturbing $\Phi$ if necessary, 
$J_u^2 \cap F_k$ consists of finitely many points for $0 \le k \le 2$ and $u \in D^2$. 
Under this assumption,  
the condition (ii) is equivalent to the following: 
there exists $t_0 \in J$ such that the horizontal segment $\{t=t_0\}$ in $J_u^2$ 
disjoint from both $\Cl(\Rsc_a)$ and $\Cl(\Rsc_b)$ 
(the right in the Figure~\ref{fig:spanned_split}). 

\begin{figure}
\includegraphics[width=12cm]{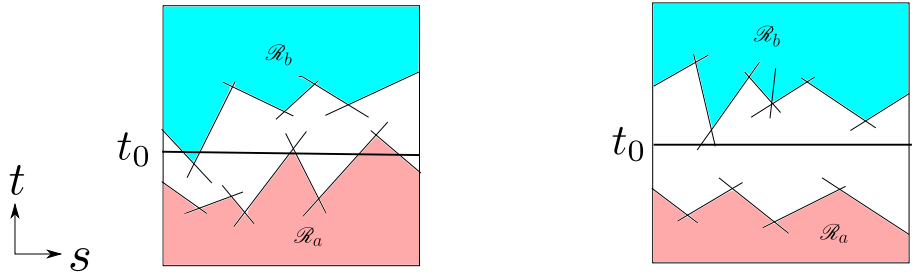}
\caption{The graphic defined by $f$ and $h_u$ is spanned 
if there exists a horizontal segment in $J_u^2$ 
intersecting both $\Rsc_a$ and $\Rsc_b$
(left). 
On the other hand,  
the graphic is split 
if there exists a horizontal segment 
disjoint from both $\Cl(\Rsc_a)$ and $\Cl(\Rsc_b)$ (right).}
\label{fig:spanned_split}
\end{figure}

\begin{proposition}\label{prop:frontiors} 
If $g(\Sig') \ge 2$, then 
$\Cl(\Rsc_a)$ and $\Cl(\Rsc_b)$ intersect only at points of $F_0$.  
\end{proposition}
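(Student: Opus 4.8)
The plan is to understand what $\Cl(\Rsc_a) \cap \Cl(\Rsc_b)$ can look like by examining, stratum by stratum, the possible intersections of the closures of these two regions with each piece $F_k$ of the graphic $\Gamma$ (together with possible coincidences in the complement $W$ of the graphic). Since $\Rsc_a$ and $\Rsc_b$ are open, disjoint (Proposition~\ref{prop:regions}(1), using $g(\Sig')\ge 2 \neq 0$), and bounded by $\Gamma$ (Proposition~\ref{prop:regions}(2)), any point of $\Cl(\Rsc_a)\cap\Cl(\Rsc_b)$ must lie on $\Gamma$. So the task reduces to ruling out such intersection points on $F_3$, $F_2$, and $F_1$, leaving only $F_0$. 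I would set up the local picture at a point $y \in F_k$ using the canonical forms for $\Phi$ listed in Section~\ref{sub:graphics}: near such a point the graphic is a standard model (a smooth $3$-manifold locally for $F_3$; a normal-crossing of two such sheets or a cusp locus for $F_2$; a triple crossing, a fold-through-cusp, or a swallowtail locus for $F_1$), and the complementary regions of $J^2\times D^2$ are labeled in the style of \cite{Jo10} by which side of each local sheet they lie on. The key local fact to extract is that crossing a fold sheet of $S_2$ changes exactly one ``Morse-data'' label of the relevant level surface $f^{-1}(s)$ relative to $h_u^{-1}(t)$, and the conditions ``$f^{-1}(s)$ is mostly above'' versus ``mostly below'' $h_u^{-1}(t)$ are determined by these labels in a way analogous to Johnson's region-labeling scheme.

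The first main step is $F_3$: a point $y\in F_3$ has a neighborhood in which $\Gamma$ is a single smooth hypersurface separating two regions, and I would argue that these two local regions cannot be $\Rsc_a$ and $\Rsc_b$ respectively. The point is that passing through a single fold sheet changes the ``mostly above/below'' status of at most one component of the intersection $f^{-1}(s)\cap h_u^{-1}(t)$ — it cannot simultaneously turn a surface that is mostly above into one that is mostly below, because being mostly above is a statement about \emph{all} components of $f^{-1}(s)\cap U^-_{h_u^{-1}(t)}$ lying in disk subsets, and being mostly below is the analogous statement on the other side; a single fold modification (a local index-changing move of $f|_{h_u^{-1}(t)}$ or a single saddle-disk appearance/disappearance) changes the picture near only one such component, so it cannot convert all-disks-on-one-side into all-disks-on-the-other-side when $g(\Sig')\ge 2$ forces the intersection pattern to be genuinely nontrivial on both sides near $\Rsc_a$ and near $\Rsc_b$. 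The second step handles $F_2$ and $F_1$ by the same philosophy: a neighborhood of $y\in F_2$ (resp. $F_1$) is a union of at most two (resp. three) local regions adjacent to $y$ in a controlled combinatorial pattern, and for $\Cl(\Rsc_a)$ and $\Cl(\Rsc_b)$ to meet at $y$ one would need two of these local regions to be $\Rsc_a$ and $\Rsc_b$; I would check, using the convexity in the $t$-direction (Proposition~\ref{prop:regions}(3)) to cut down the combinatorial possibilities, that in each model this again forces a single fold-crossing (or cusp/swallowtail crossing, which likewise modifies only one component's local data) to interpolate between ``mostly above'' and ``mostly below'', which is impossible by the $F_3$ analysis applied to an adjacent $2$-dimensional stratum. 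Only at $F_0$, where four generic sheets (or the equivalent degenerate configurations) come together, is there enough room for both $\Rsc_a$ and $\Rsc_b$ to accumulate.

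The main obstacle I anticipate is the bookkeeping at $F_1$ and the interface with the $S_3$ and $S_4$ singularities: cusps and swallowtails do not separate the local neighborhood into regions by smooth hypersurfaces in the naive way, and I will need to verify carefully that crossing a cusp edge or a swallowtail edge still changes the ``mostly above/below'' label by a single-component modification — this requires reading off the Morse-theoretic meaning of the canonical forms $(a,b,c,x^3+ax-y^2)$ and $(a,b,c,x^4+ax^2+bx\pm y^2)$ and translating ``birth/death of a pair of critical points'' or ``the passing of a degenerate critical point'' into a statement about how $f^{-1}(s)\cap h_u^{-1}(t)$ changes, and then confirming that such a change is localized to a neighborhood of one intersection component of $f^{-1}(s)$ with $h_u^{-1}(t)$. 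I expect that once this single-component principle is pinned down uniformly for folds, cusps, and swallowtails, the proof is a finite case check over the stratification of $\Gamma$ recalled in Section~\ref{sub:graphics}, and the hypothesis $g(\Sig')\ge 2$ enters precisely to guarantee that $\Rsc_a$ and $\Rsc_b$ are defined by honestly nontrivial configurations (so that a one-component change cannot bridge them), mirroring the role it plays in Proposition~\ref{prop:regions}(1).
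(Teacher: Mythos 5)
Your reduction to points of $\Gamma$ and your instinct to use the $t$-convexity of Proposition~\ref{prop:regions}(3) are both on target, but the core of your argument has a genuine gap. First, the combinatorics of adjacency are wrong: at a point of $F_1$ coming from three fold sheets in general position, a neighborhood is cut into up to eight local regions, and two of them can be separated by as many as three sheets; so the claim that meeting of $\Cl(\Rsc_a)$ and $\Cl(\Rsc_b)$ at $F_1$ or $F_2$ ``forces a single fold-crossing to interpolate'' and hence reduces to the $F_3$ case is false. Second, and more importantly, the ``single-component modification'' principle is not quantitative enough to prove the statement. The whole point of the proposition is to distinguish at most \emph{three} singular preimages (the strata $F_1$, $F_2$, $F_3$) from \emph{four} (the stratum $F_0$, where the closures genuinely can meet). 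Any argument that only says ``one fold cannot flip mostly-above to mostly-below'' cannot see this threshold, and your stated use of the hypothesis $g(\Sig')\ge 2$ (``honestly nontrivial configurations'') is too vague to supply it; note that for $g(\Sig')=1$ the conclusion fails even though $\Rsc_a$ and $\Rsc_b$ are still disjoint.

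The paper's proof avoids the local stratum-by-stratum analysis entirely. Given $y=(s_0,t_0,u_0)$ in the intersection, it takes the $t$-direction segment $l=\{s=s_0\}\cap\{u=u_0\}$, which by convexity lies in $\Cl(\Rsc_a)\cup\Cl(\Rsc_b)$, and perturbs it to a nearby segment $\wtil{l}$ transverse to all strata. The preimage $\Phi^{-1}(\wtil{l})$ is a copy of $\Sig'$ (a level surface of $f$), and the restriction $h$ of $h_{u'_0}$ to it is almost Morse. If $y\in F_k$ with $k\ge 1$, then $y$ has at most three preimages in $S$, so the interval $[t_-,t_+]$ between the two region boundaries contains at most three critical values of $h$, while for $t$ outside $[t_-,t_+]$ every loop of $h^{-1}(t)$ is trivial in $\Sig'$ (because those levels lie in $\Rsc_a$ or $\Rsc_b$). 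A Euler-characteristic count then forces $\chi(\Sig')\ge -1$, contradicting $g(\Sig')\ge 2$. This is the quantitative step your proposal is missing: it is a global Morse-theoretic count along the $t$-line through $y$, not a local analysis of how one fold, cusp, or swallowtail alters the labels. If you want to salvage your approach, you would need to prove a lemma of the form ``if $h$ is almost Morse on $\Sig'$ with all level loops trivial outside an interval containing $k$ critical values, then $\chi(\Sig')\ge 2-?\cdot k$,'' which is essentially the paper's argument in disguise.
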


\begin{proof}
We first note that by Proposition~\ref{prop:regions} (1) and (2), 
the intersection between $\Cl(\Rsc_a)$ and $\Cl(\Rsc_b)$ is contained in $\Gamma$. 
Suppose that 
$\Cl(\Rsc_a) \cap \Cl(\Rsc_b) \neq \emptyset$ 
and $y \in \Cl(\Rsc_a) \cap \Cl(\Rsc_b) \subset \Gamma$. 
Let $(s_0,t_0,u_0)$ be the coordinate of $y$. 
Let $l$ be the segment in $J^2 \times D^2$ defined by $l:=\{s=s_0\} \cap \{u=u_0\}$. 
Note that $y \in l$ by definition. 
Furthermore, it follows from Proposition~\ref{prop:regions} (3)  
that $l \subset \Cl(\Rsc_a) \cup \Cl(\Rsc_b)$. 
Consider a point $(s'_0,t_0,u'_0)$ 
obtained by perturbing the point $(s_0,t_0,u_0)$ in the $s$- and $u$-directions.  
We may assume that the segment $\wtil{l}:=\{s=s'_0\} \cap \{u=u'_0\}$ is transverse 
to each stratum of $\Gamma$. 
The preimage $\Phi^{-1}(\wtil{l})$ of $\wtil{l}$ 
is the genus $g(\Sig')$ Heegaard surface 
in $M \times \{u'_0\} \subset M \times D^2$, 
which can be naturally identified with $\Sig'$. 
Let $h: \Sig' \rightarrow J$ denote the function defined to be the restriction 
of $h_{u'_0}$ on $\Phi^{-1}(\wtil{l}) \cong \Sig'$. 
Then, $h$ is almost Morse. 

Now suppose, for the sake of contradiction, 
that $y$ is in $F_k$ with $k \ge 1$. 
Let $(s'_0,t_-,u'_0)$ denote the coordinate of the intersection point between $\wtil{l}$ 
and the boundary of $\Cl(\Rsc_a)$. 
Note that such a point is unique by Proposition~\ref{prop:regions} (3). 
Similarly, let $(s'_0,t_+,u'_0)$ denote the coordinate of the intersection point 
between $\wtil{l}$ and the boundary of $\Cl(\Rsc_b)$. 
Then, $h$ satisfies the following.  
\begin{itemize}
\item For any regular value $t \in J \setminus [t_-,t_+]$, 
	every loop of $h^{-1}(t)$ is trivial in $\Sig'$. 
\item The interval $[t_-,t_+]$ contains at most three critical values of $h$. 
\end{itemize}
\noindent 
It is easily seen that the Euler characteristic of such $\Sig'$ 
must be at least $-1$, 
but this is impossible because $g(\Sig') \ge 2$ by assumption. 
\end{proof}

\section{Proof of Theorem~\ref{thm:homotopy_equivalence}}\label{sec:poof}
In this section, we prove Theorem~\ref{thm:homotopy_equivalence}. 
Suppose that $M$ is a closed orientable $3$-manifold, and that
$(M,\Sig')$ is a genus $g(\Sig') \ge 2$ Heegaard splitting with $d(\Sig') > 2g(\Sig')+2$. 
Suppose that $(M,\Sig)$ is a once-stabilization of $(M,\Sig')$. 
Let $\He$ denote the path-connected component of $\He(M,\Sig)$ containing $\Sig$. 
Let $K^\pm$ and $\He^\pm \subset \He$ be as in Section~\ref{sec:introduction}. 
Set $\He^\cup:=\He^- \cup \He^+$ and 
$\He^\cap:=\He^- \cap \He^+$. 

By Theorem~\ref{thm:large_manifolds}, 
we have 
$\pi_k(\He^-)=\pi_k(\He^+)=\pi_k(\He^\cap)=0$ for $k \ge2$. 
By the Mayer-Vietoris exact sequence, 
it follows that $H_k(\He^{\cup};\mathbb{Z})=0$  for $k \ge 2$. 
Applying Hurewicz's theorem, 
we have $\pi_k(\He^\cup)=0$ for $k \ge 2$. 
On the other hand, by Theorems~\ref{thm:JM13} and \ref{thm:dist-hyp}, 
we have $\pi_k(\He)=0$ for $k \ge 2$. 
So, to prove Theorem~\ref{thm:homotopy_equivalence}, 
it is enough to show the following. 
 
\begin{lemma}\label{lem:pi1}
The inclusion $\He^{\cup} \rightarrow \He$ induces the isomorphism 
$\pi_1(\He^{\cup}) \rightarrow \pi_1(\He)$.
\end{lemma}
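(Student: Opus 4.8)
\textbf{Proof strategy for Lemma~\ref{lem:pi1}.}
The plan is to show surjectivity and injectivity of $\pi_1(\He^\cup) \to \pi_1(\He)$ separately, in both cases using the graphic machinery of Section~\ref{sec:sweep-out} to deform a given family of Heegaard surfaces $\Sig$ into $\He^\cup$. First I would fix a sweep-out $f:M \to J$ associated with $(M,\Sig')$ with $f^{-1}(0)=\Sig'$, and observe that a Heegaard surface $T$ representing a point of $\He$ lies in $\He^-$ precisely when $T$ can be isotoped off the spine $K^+$ (so that $T$ becomes a genus $g(\Sig')+1$ splitting of $M \setminus \Int(N(K^+))$), and symmetrically for $\He^+$. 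The bridge between this description and the graphic is the dichotomy recorded in Remark~\ref{rem:distance} and Proposition~\ref{prop:frontiors}: since $d(\Sig') > 2g(\Sig')+2 = 2g(\Sig)$, no sweep-out $h_u$ associated with $(M,\Sig)$ can \emph{split} $f$, so for every $u$ either $h_u$ spans $f$ or, by the convexity and disjointness in Proposition~\ref{prop:regions} and the structure of the graphic, some level $h_u^{-1}(t)$ is ``mostly above'' or ``mostly below'' $f^{-1}(s)$ for a suitable $s$ — and in the latter two cases an innermost-disk argument (Hatcher's parametrized version, cited in the introduction) pushes that level surface, hence $\vph(u)=h_u^{-1}(0)$, into $\He^+$ or $\He^-$ respectively.

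For surjectivity, I would take a loop $\gamma:S^1 \to \He$ based at $\Sig$, realize it via Lemma~\ref{lem:family_sweepout} as a family $\{h_u\}_{u\in S^1}$ of sweep-outs with $h_u^{-1}(0)=\gamma(u)$, form $\Phi:M\times S^1 \to J^2\times S^1$ together with the regions $\Rsc_a,\Rsc_b$, and analyze the $1$-parameter graphic. The key point is to partition $S^1$ into arcs according to whether the fibre graphic over $u$ has a horizontal segment meeting $\Rsc_a$ (then $\gamma(u)$ is homotopable into $\He^+$) or meeting $\Rsc_b$ (then into $\He^-$), these two possibilities being exhaustive by the no-splitting conclusion above, and overlapping on an open set where both hold (so $\gamma(u)\in\He^\cap$). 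On each arc one gets an explicit isotopy of $\gamma(u)$ into $\He^\pm$ depending continuously on $u$; the generic finite-stratified structure of $\Gamma$ guarantees finitely many arcs, and one patches the local homotopies using the overlaps to produce a homotopy of $\gamma$ rel basepoint into a loop lying entirely in $\He^\cup$. Injectivity is the same argument run one dimension up: a null-homotopy $\vph:D^2\to\He$ of a loop in $\He^\cup$ is promoted to a family $\{h_u\}_{u\in D^2}$, and the stratification of the $2$-parameter graphic $\Gamma \subset J^2\times D^2$ (the pieces $F_3,F_2,F_1,F_0$ described in Section~\ref{sub:graphics}) is used to cut $D^2$ into cells on which $\vph$ is homotoped into $\He^-$, $\He^+$, or $\He^\cap$, with the lower-dimensional strata controlling how these local pictures are glued; the boundary loop being already in $\He^\cup$ lets one keep the homotopy fixed (up to a controlled correction) on $\partial D^2$, yielding a null-homotopy inside $\He^\cup$.

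The main obstacle I expect is the patching near the strata of codimension $\ge 1$ in $\Gamma$, i.e.\ making the \emph{continuously varying} choice of ``push $\gamma(u)$ off $K^+$'' versus ``push off $K^-$'' genuinely compatible across the walls of the cell decomposition of the parameter disk. Concretely: over a generic $u$ one has a well-defined region $\Rsc_a$ (or $\Rsc_b$) and the innermost-disk isotopy is canonical up to isotopy, but as $u$ crosses $F_2$ or $F_1$ the combinatorics of $\Gamma \cap J^2_u$ changes (new cusps, crossings, swallowtails appear), and one must check that the corresponding isotopies of the level surfaces extend over these transitions — this is exactly where Hatcher's parametrized innermost-disk argument does the heavy lifting, and where Proposition~\ref{prop:frontiors} (that $\Cl(\Rsc_a)$ and $\Cl(\Rsc_b)$ can only touch along $F_0$) is essential to rule out degenerate tangencies along positive-dimensional strata. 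A secondary technical point is handling the uncontrolled singularities in $S_5$ of type $\Sig^{2,2,0}$: since these occur only along $F_0$ and only finitely often, one arranges the cell decomposition of $D^2$ so that no gluing wall passes through them, and the argument is unaffected.
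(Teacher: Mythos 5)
Your overall architecture (graphics, the regions $\Rsc_a$ and $\Rsc_b$, Hatcher's parametrized innermost-disk argument to patch local isotopies) matches the paper's, and your decision to reprove surjectivity is harmless (the paper simply quotes Johnson's result that $\pi_1(\He,\He^\cup)=1$ and concentrates on injectivity, i.e.\ on $\pi_2(\He,\He^\cup)=0$). But there is a genuine gap at the central step. You assert that since the graphic over $u$ cannot be split, ``either $h_u$ spans $f$ or \dots some level $h_u^{-1}(t)$ is mostly above or mostly below $f^{-1}(s)$,'' and you then treat the latter alternatives as directly pushable into $\He^\pm$. This trichotomy is not correct as stated ($\Rsc_a$ and $\Rsc_b$ are always nonempty near $t=\mp 1$, so ``some level is mostly above/below'' carries no information), and more importantly ``not split'' does \emph{not} formally imply ``spanned'': the borderline case, where a horizontal segment $l=\{t=t_0\}$ meets $\Cl(\Rsc_a)$ and $\Cl(\Rsc_b)$ only along their frontiers, is neither. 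The paper's key technical lemma (Lemma~\ref{lem:spanning}) rules this case out by using the extra parameter directions: since $\Cl(\Rsc_a)\cap\Cl(\Rsc_b)$ consists only of points of the zero-dimensional stratum $F_0$ (Proposition~\ref{prop:frontiors}), the segment $l$ can be pushed off $J^2_{u_0}$ inside $J^2\times D^2$ to an arc $\wtil{l}$ disjoint from $\Rsc_a\cup\Rsc_b$ and transverse to $\Gamma$; the preimage $\Phi^{-1}(\wtil{l})$ is then an embedded genus $g(\Sig')+1$ splitting surface for $f$, contradicting $d(\Sig')>2g(\Sig')+2$ via Lemma~\ref{lem:splitting}. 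This perturbation argument is exactly why the hypothesis is $d(\Sig')>2g(\Sig')+2=2g(\Sig)$ rather than $>2g(\Sig')$, and it is the step your sketch omits. (You invoke Proposition~\ref{prop:frontiors} only for the later patching, where it is not actually where it is needed.)

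A secondary omission: once every $\vph(u)$ is arranged to be a spanning surface, the compression into $\He^\cup$ rests on Johnson's observation that at most one of the two level surfaces $\Sig^{\pp},\Sig^{\pn}$ carries an actual compression of $\Sig_u$ (a genus count using $g(\Sig_u)=g(\Sig')+1$), so that the innermost-disk compressions all push $\Sig_u$ to one side; and after the parametrized compression one must still \emph{verify} that the resulting surfaces lie in $\He^-$, $\He^+$ or $\He^\cap$, which in the paper is a separate inductive claim using the classification of bicompressible genus $g(\Sig')+1$ surfaces in $\Sig'\times J$. Your sketch treats ``disjoint from $K^\pm$'' as synonymous with membership in $\He^\mp$, which requires this additional argument.
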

In \cite{Jo13}, Johnson proved that $\pi_1(\He)$ is generated by 
$\pi_1(\He^-)$ and $\pi_1(\He^+)$, 
and hence the induced map is a surjection. 
(In fact, 
using the notations in this paper, 
what he proved in \cite{Jo13} can be written as 
\[\pi_1(\He,\He^\cup)=1.\]
See Lemmas~2 and 3 in \cite{Jo13}. 
The following argument is motivated by this observation.) 
So in this paper, we focus on the proof of the injectivity of the induced map. 
In other words, we will show the following:  

\begin{lemma}\label{lem:2-homotopy}
The second homotopy group $\pi_{2}(\He,\He^\cup)$ of the pair $(\He,\He^\cup)$ 
vanishes. 
\end{lemma}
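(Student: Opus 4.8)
The plan is to adapt the one-parameter argument of Johnson \cite{Jo13}, which yields $\pi_1(\He,\He^\cup)=1$, to a two-parameter version. Given a map $\psi\colon(D^2,S^1)\to(\He,\He^\cup)$, I would homotope it, through maps of pairs, to a map with image in $\He^\cup$; since such a map is null-homotopic as a map of pairs, this gives $\pi_2(\He,\He^\cup)=0$. First, fix a sweep-out $f\colon M\to J$ associated with $(M,\Sig')$ whose spine is the fixed spine $K=K^-\cup K^+$, say with $f^{-1}(-1)=K^-$ and $f^{-1}(1)=K^+$. By Lemma~\ref{lem:family_sweepout}, $\psi$ lifts to a family $\{h_u\colon M\to J\mid u\in D^2\}$ of sweep-outs associated with $(M,\Sig)$ such that $h_u^{-1}(0)=\psi(u)$; after a small perturbation $\Phi(x,u)=(f(x),h_u(x),u)$ is excellent and therefore determines a graphic $\Gamma\subset J^2\times D^2$ together with the regions $\Rsc_a$ and $\Rsc_b$. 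Using that $\psi|_{S^1}$ already takes values in $\He^\cup$, I would also arrange the lift so that $h_u$ spans $f$ for every $u\in S^1$.

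The distance hypothesis enters as follows. Since $(M,\Sig)$ is a once-stabilization of $(M,\Sig')$ we have $g(\Sig)=g(\Sig')+1$, hence $2g(\Sig)=2g(\Sig')+2<d(\Sig')$, so by Remark~\ref{rem:distance} the two-dimensional graphic $\Gamma\cap J_u^2$ is \emph{not split} for any $u\in D^2$. Using this together with the convexity of $\Rsc_a$ and $\Rsc_b$ in the $t$-direction and their disjointness (Proposition~\ref{prop:regions}), an elementary argument with horizontal segments shows that for each $u$ either $h_u$ \emph{spans} $f$, or else $\Cl(\Rsc_a)\cap\Cl(\Rsc_b)\cap J_u^2\neq\emptyset$. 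By Proposition~\ref{prop:frontiors} the set $\Cl(\Rsc_a)\cap\Cl(\Rsc_b)$ lies in $F_0$, which is $0$-dimensional; being also compact, it is a finite set of points. Hence $\mathcal{U}:=\{u\in D^2\mid h_u\text{ spans }f\}$ is open and dense, and $D^2\setminus\mathcal{U}$ is finite and, by the arrangement above, contained in the interior of $D^2$.

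Over $\mathcal{U}$ I would push $\psi$ into $\He^\cup$. For $u\in\mathcal{U}$, a spanning surface $h_u^{-1}(t)$ comes with a level $f^{-1}(a)$ that is mostly above it and a level $f^{-1}(b)$ that is mostly below it; as in Johnson's treatment of spanned graphics \cite{Jo10,Jo13}, the curves in which $f^{-1}(a)$ and $f^{-1}(b)$ meet $h_u^{-1}(t)$ are inessential on $f^{-1}(a)$ and on $f^{-1}(b)$, and compressing them isotopes $h_u^{-1}(t)$ off $K^+$ or off $K^-$, hence into $\He^-\cup\He^+$. Since $h_u^{-1}(t)$ is isotopic to $\psi(u)=h_u^{-1}(0)$ along the sweep-out $h_u$, the same holds for $\psi(u)$. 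To perform these compressions continuously in $u$ I would invoke Hatcher's parametrized innermost-disk argument \cite{Hat76}; the witnessing levels $a$, $b$, $t$ can be chosen locally in $u$ and patched together, distinct admissible choices at a given $u$ leading to isotopic surfaces. This produces a homotopy of $\psi|_{\mathcal{U}}$, through maps into $\He$, whose final image lies in $\He^\cup$.

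The hard part will be to make this homotopy global: to extend it across the finitely many points of $D^2\setminus\mathcal{U}$ and to reconcile, on overlaps of the local pieces, whether a given $\psi(u)$ has been carried into $\He^-$ or into $\He^+$, conclusions that can genuinely differ on different parts of $\mathcal{U}$. Near a point $u_0\in D^2\setminus\mathcal{U}$, Proposition~\ref{prop:frontiors} says that $\Cl(\Rsc_a)$ and $\Cl(\Rsc_b)$ touch at a single point of $F_0$ in $J_{u_0}^2$; a local model of the graphic there should show that the pushes coming from the surrounding parameters of $\mathcal{U}$ are mutually compatible and can all be guided into $\He^\cap=\He^-\cap\He^+$, which is nonempty as noted after the statement of Theorem~\ref{thm:homotopy_equivalence}, so that the homotopy extends continuously over $u_0$. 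Assembling these local homotopies into a single global one and verifying this compatibility — the genuinely two-parameter input, in the role played by Johnson's one-parameter analysis in the proof that $\pi_1(\He,\He^\cup)=1$ — is where the bulk of the work lies. Once it is done, $\psi$ has been homotoped, as a map of pairs, into $\He^\cup$, proving $\pi_2(\He,\He^\cup)=0$.
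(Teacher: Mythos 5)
Your overall architecture matches the paper's: lift $\psi$ to a family of sweep-outs, use the distance hypothesis via Remark~\ref{rem:distance} to rule out split graphics, compress spanning surfaces off a spine using Hatcher's parametrized argument, and then reconcile $\He^-$ versus $\He^+$. But there are two genuine gaps, and the first rests on a false dichotomy. A graphic that is neither spanned nor split need \emph{not} satisfy $\Cl(\Rsc_a)\cap\Cl(\Rsc_b)\cap J_u^2\neq\emptyset$: the critical horizontal segment can touch the frontier of $\Cl(\Rsc_a)$ and the frontier of $\Cl(\Rsc_b)$ at \emph{separate} points of the same level $\{t=t_0\}$ (this is exactly the left-hand configuration in Figure~\ref{fig:intersection}). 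So your set $D^2\setminus\mathcal{U}$ is not controlled by the finiteness of $\Cl(\Rsc_a)\cap\Cl(\Rsc_b)\subset F_0$, and the claim that it is a finite set of interior points is unjustified. The paper's Lemma~\ref{lem:spanning} proves something stronger that makes the whole issue disappear: $D^2\setminus\mathcal{U}$ is \emph{empty}. The argument you are missing is the perturbation into the extra parameter directions: if the graphic at $u_0$ is not spanned, the offending segment $l\subset J_{u_0}^2$ meets $\Cl(\Rsc_a)\cup\Cl(\Rsc_b)$ only in their frontiers, and since $\Cl(\Rsc_a)\cap\Cl(\Rsc_b)$ lies in the $0$-dimensional stratum $F_0$ of $J^2\times D^2$ (Proposition~\ref{prop:frontiors}), $l$ can be pushed off the slice $J_{u_0}^2$ to an arc $\wtil{l}$ disjoint from $\Rsc_a\cup\Rsc_b$ and transverse to the strata of $\Gamma$. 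The preimage $\Phi^{-1}(\wtil{l})$ projects to a genus $g(\Sig')+1$ splitting surface for $f$, contradicting $d(\Sig')>2g(\Sig')+2$ by Lemma~\ref{lem:splitting}. This is the genuinely two-parameter input; without it your local patching around ``bad points'' has no starting point, and with it no patching is needed, because one simply takes a global section of the $I$-bundle $\bigsqcup_{u}I_u$ of spanning levels to homotope $\psi$ to a map all of whose values are spanning surfaces.

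The second gap is the one you yourself flag as ``where the bulk of the work lies'': after the parametrized compressions, showing that the compressed surfaces actually land in $\He^\cup$ for all $u$, i.e., reconciling on overlaps whether a given surface has been pushed into $\He^-$ or into $\He^+$. The paper resolves this by an induction along a path in $D^2$ over the finite cover $\{B_i\}$: at a parameter where the reference levels switch, the surface is disjoint from one level of each pair and meets the other, hence is bicompressible in $\Sig'\times J$ while separating the two spines; such a genus $g(\Sig')+1$ surface is reducible, and cutting along the reducing sphere shows it lies in $\He^\cap$, which mediates the transition. Your proposal correctly guesses that $\He^\cap$ should play this role but does not supply the argument, so as written the proof is incomplete at both of its decisive steps.
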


Let $e_0 \in \partial D^2$ be the basepoint. 
Let $\vph:(D^2,\partial D^2,e_0) \rightarrow (\He,\He^\cup,\Sig)$. 
We will show that $[\vph]=0 \in \pi_{2}(\He,\He^\cup)$. 
Let $f:M \rightarrow J$ be a sweep-out 
with $f^{-1}(0)= \Sig'$ and $f^{-1}(\pm1)=K^\pm$. 
By Lemma~\ref{lem:family_sweepout}, 
there exists a family   
$\{h_u:M \rightarrow J \mid u \in D^2\}$ of sweep-outs such that 
$h_u^{-1}(0)=\vph(u)$ for $u \in D^2$. 
The key of the proof is the following. 

\begin{lemma}\label{lem:spanning}
For any $u \in D^2$, the graphic defined by $f$ and $h_u$ is spanned. 
\end{lemma}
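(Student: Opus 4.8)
The plan is to fix $u \in D^2$ and consider the $2$D graphic $\Gamma_u := \Gamma \cap J_u^2$ together with the regions $\Rsc_a \cap J_u^2$ and $\Rsc_b \cap J_u^2$. Recall $\vph(u)=h_u^{-1}(0)$ lies in $\He$, so $(M, h_u^{-1}(0))$ is a once-stabilization of $(M,\Sig')$; in particular $g(\Sig) = g(\Sig')+1$, so $2g(\Sig) = 2g(\Sig')+2 < d(\Sig')$. By Remark~\ref{rem:distance}, the graphic defined by $f$ and $h_u$ \emph{cannot} be split. Translating through the ``horizontal segment'' reformulation recorded after the definition of spanned/split: there is no horizontal level $\{t=t_0\}$ in $J_u^2$ that misses both $\Cl(\Rsc_a)$ and $\Cl(\Rsc_b)$. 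So for every $t_0 \in J$ the segment $\{t=t_0\}$ meets $\Cl(\Rsc_a) \cup \Cl(\Rsc_b)$.

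Next I would run a connectedness-in-$t$ argument. By Proposition~\ref{prop:regions}(3), for a fixed $u$ and fixed $s$, the set of $t$ with $(s,t,u) \in \Rsc_a$ is an interval of the form $[-1, t_-(s))$ (open, nonempty since small $t$ works) and likewise $\{t : (s,t,u)\in \Rsc_b\}$ is an interval $(t_+(s), 1]$; moreover $t_-(s) \le t_+(s)$ by Proposition~\ref{prop:regions}(1). Define $A := \{t_0 \in J : \{t=t_0\} \text{ meets } \Cl(\Rsc_a)\}$ and $B := \{t_0 \in J : \{t=t_0\} \text{ meets } \Cl(\Rsc_b)\}$. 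Each is closed (the $\Cl(\Rsc_\bullet)$ are closed and the projection to the $t$-coordinate from a compact set is closed), each is a union of the intervals above hence ``downward/upward closed'' respectively: $A = [-1, \alpha]$ and $B = [\beta, 1]$ for some $\alpha, \beta$. The no-split conclusion says $A \cup B = J$, i.e. $\beta \le \alpha$. Thus $[\beta,\alpha]$ is a nonempty interval with the property that every $t_0$ in it has $\{t=t_0\}$ meeting \emph{both} $\Cl(\Rsc_a)$ and $\Cl(\Rsc_b)$.

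Now I upgrade from closures to the open regions. Pick $t_0$ in the interior of $[\beta,\alpha]$ (or handle the degenerate case $\alpha=\beta$ separately). The segment $\{t=t_0\}$ meets $\Cl(\Rsc_a)$ at some point $(s_a, t_0, u)$ and $\Cl(\Rsc_b)$ at some point $(s_b, t_0, u)$. By Proposition~\ref{prop:regions}(3) applied with $t$ slightly below $t_0$ (for $\Rsc_a$) and slightly above $t_0$ (for $\Rsc_b$), together with openness of $\Rsc_a, \Rsc_b$, one pushes these contact points into the open regions: a point $(s_a, t_0 - \ep, u)$ lies in $\Rsc_a$ and $(s_b, t_0+\ep', u)$ lies in $\Rsc_b$ for small $\ep, \ep' > 0$, provided $t_0$ was chosen in the interior so that $t_0 \pm \ep$ stays inside $[\beta,\alpha]$. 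Choosing $\ep = \ep'$ and replacing $t_0$ by $t_0$ again after a further tiny adjustment, one obtains a single horizontal level meeting both $\Rsc_a$ and $\Rsc_b$; that is precisely the condition (i) for $h_u$ to span $f$ (equivalently, there is $t$ with $h_u^{-1}(t)$ a spanning surface for $f$). The main obstacle I anticipate is making this last passage from $\Cl(\Rsc_\bullet)$ to $\Rsc_\bullet$ fully rigorous at the endpoints $t=\alpha,\beta$ — one must rule out the pathology where the only contact of the horizontal level with, say, $\Cl(\Rsc_a)$ happens at a frontier point of $F_0$-type (by Proposition~\ref{prop:frontiors}) from which one cannot slide into $\Rsc_a$ while keeping $\Rsc_b$; the convexity Proposition~\ref{prop:regions}(3) plus choosing $t_0$ strictly interior to $[\beta,\alpha]$ is what saves the argument, and writing that carefully is the crux.
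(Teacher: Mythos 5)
Your reduction to ``the graphic cannot be split'' via Remark~\ref{rem:distance} matches the paper, and you correctly identify the crux: passing from contact with $\Cl(\Rsc_a)$ and $\Cl(\Rsc_b)$ to intersection with the open regions $\Rsc_a$ and $\Rsc_b$ at a single common level. But your proposed resolution of that crux does not work, and this is a genuine gap. The step ``$(s_a,t_0,u)\in\Cl(\Rsc_a)$ implies $(s_a,t_0-\ep,u)\in\Rsc_a$'' is false in general: the points of $\Rsc_a$ accumulating at $(s_a,t_0,u)$ may have different $s$- and $u$-coordinates, and convexity in $t$ (Proposition~\ref{prop:regions}(3)) only yields $(s_a,t_0-\ep,u)\in\Cl(\Rsc_a)$, not membership in the open set. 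Equivalently, writing $\Rsc_a\cap J_u^2=\{t<g(s)\}$, the function $g$ is only lower semi-continuous, so the closure can contain points of a vertical line lying strictly above where the open region stops on that line. (Your fallback of producing a point of $\Rsc_a$ at level $t_0-\ep$ and a point of $\Rsc_b$ at level $t_0+\ep$ would not suffice even if it held: convexity propagates those witnesses to levels $\le t_0-\ep$ and $\ge t_0+\ep$ respectively, which never certifies one spanning level.) In short, ``not split'' does not imply ``spanned'' by a purely two-dimensional convexity argument inside the slice $J_u^2$: there is a genuine third, borderline configuration in which every horizontal level meets $\Cl(\Rsc_a)\cup\Cl(\Rsc_b)$ yet some level meets both closures only in their frontiers.

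The paper closes exactly this borderline case by leaving the slice. Assuming the graphic defined by $f$ and $h_{u_0}$ is not spanned, it takes the horizontal segment $l=\{t=t_0\}$ meeting $\Cl(\Rsc_a)$ and $\Cl(\Rsc_b)$ only at frontier points, invokes Proposition~\ref{prop:frontiors} (the two closures meet only in the discrete stratum $F_0$ of $J^2\times D^2$) to perturb $l$ in the $s$- and $u$-directions to an arc $\wtil{l}\subset J^2\times D^2$ disjoint from $\Rsc_a\cup\Rsc_b$ and transverse to the strata of $\Gamma$, and then shows that $q_1(\Phi^{-1}(\wtil{l}))$ is a genus $g(\Sig')+1$ splitting surface for $f$; Lemma~\ref{lem:splitting} then gives $d(\Sig')\le 2g(\Sig')+2$, contradicting the hypothesis. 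Note that this splitting surface is not a level surface of any $h_u$, so the contradiction is not ``some $h_u$ splits $f$'' but comes from a genuinely four-dimensional perturbation; this use of the ambient parameter directions, and of Proposition~\ref{prop:frontiors} to make the perturbation possible, is the idea missing from your proposal.
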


\begin{proof} 
Suppose, contrary to our claim, 
there exists $u_0 \in D^2$ such that 
the graphic defined by $f$ and $h_{u_0}$ is not spanned. 
Put $J_{u_0}^2:=\{(s,t,u) \in J^2 \times D^2 \mid u=u_0\}$. 
For brevity, we denote the restriction of $\Phi$ on $W=(M \times D^2) \setminus L$ by 
the same symbol $\Phi$ in the following. 
Set $\Gamma:=\Phi(S(\Phi))$.  
The intersection $\Gamma \cap J_{u_0}^2 \subset J_{u_0}^2$ is precisely the graphic 
defined by $f$ and $h_{u_0}$. 

As noted in Remark~\ref{rem:distance}, 
this graphic cannot be split. 
Then, there exists $t_0 \in J$ such that 
the horizontal segment $l:=\{t=t_0\} \subset J_{u_0}^2$ 
intersects both $\Cl(\Rsc_a)$ and $\Cl(\Rsc_b)$ at their boundaries  
(Figure~\ref{fig:intersection}). 
\begin{figure}
\includegraphics[width=12cm]{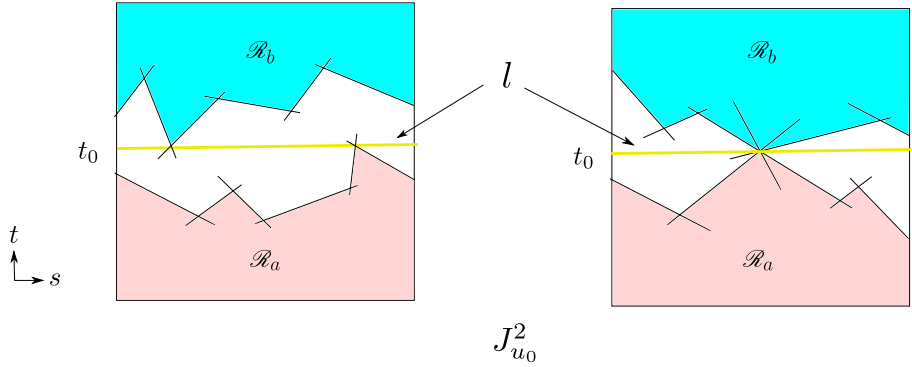}
\caption{If the graphic defined by $f$ and $h_{u_0}$ is not spanned, 
then there exists a horizontal segment $l \subset J_{u_0}^2$ 
intersecting $\Cl(\Rsc_a)$ and $\Cl(\Rsc_b)$ at their boundaries: 
the intersection is either separate vertices (left) or 
a single common vertex (right). 
In either case $l$ can be perturbed in $J^2 \times D^2$ so that $l$ does not meet 
$\Cl(\Rsc_a) \cup \Cl(\Rsc_b)$. }
\label{fig:intersection}
\end{figure}
By Proposition~\ref{prop:frontiors}, 
$\Cl(\Rsc_a)$ and $\Cl(\Rsc_b)$ 
intersect only at points of $F_0$. 
So,  pushing $l$ out of $J_{u_0}^2$ slightly, 
we get  an arc $\wtil{l} \subset J^2 \times D^2$  
such that 
\begin{itemize}
\item $\wtil{l}$ is disjoint from both 
	$\Rsc_a$ and $\Rsc_b$, and 
\item $\wtil{l}$ is transverse to each stratum of $\Gamma$. 
\end{itemize}
Furthermore, there is a family  
$\{l_t \mid t \in I\}$ of arcs with $l_0=l$ and $l_1=\wtil{l}$ such that 
for any $t \in (0,1]$,  
$l_t$ is transverse to each stratum of $\Gamma$. 
Note that $d(p_2 \circ \Phi)$ and 
$d(p_3 \circ \Phi)$ have the maximal ranks, 
where 
$p_2:J^2 \times D^2 \rightarrow J$ 
is the projection onto the second coordinate and 
$p_3:J^2 \times D^2 \rightarrow D^2$ is the projection onto the third coordinate. 
This means that $\Phi$ is transverse to $l=l_0$.  
In consequence, $\Phi$ is transverse to $l_t$ for all $t \in [0,1]$, 
and hence 
$\wtil{\Sig}:=\Phi^{-1}(\wtil{l})$ is a closed embedded surface in $M \times D^2$ 
isotopic to $\Sig_{t_0,u_0}:=\Phi^{-1}(l)$ ($=h^{-1}_{u_0}(t_0) \times \{u_0\}$). 
In particular, we have $g(\widetilde{\Sig})=g(\Sig')+1$. 

Let $q_1:M \times D^2 \rightarrow M$ denote the projection onto the first coordinate.  
Since the restriction $q_1|_{\Sig_{t_0,u_0}}:\Sig_{t_0,u_0} \rightarrow M$ 
is an embedding, 
so is $q_1|_{\wtil{\Sig}}:\wtil{\Sig} \rightarrow M$. 
We see that $q_1(\wtil{\Sig})$ is a splitting surface for $f$. 
Consider the restriction $f \circ q_1|_{\wtil{\Sig}}$ on $\wtil{\Sig}$. 
The arc $\wtil{l}$ intersects $\Gamma$ 
only at points in $F_3$, 
which correspond to fold points of $\Phi$. 
Thus, $f \circ q_1|_{\wtil{\Sig}}$ is almost Morse. 
Let $s \in J$ be any regular value of $f \circ q_1|_{\wtil{\Sig}}$. 
By definition, we can write 
\[(f \circ q_1|_{\wtil{\Sig}})^{-1}(s)
=(h^{-1}_u(t) \times \{u\}) \cap (f^{-1}(s) \times \{u\})
\subset M \times D^2\]  
for some $t \in J$ and $u \in D^2$. 
Since $\wtil{l}$ 
is disjoint from both $\Rsc_a$ and $\Rsc_b$, 
the preimage $(f \circ q_1|_{\wtil{\Sig}})^{-1}(s)$ contains at least one loop 
that is nontrivial in the surface $f^{-1}(s) \times \{u\}$. 
This implies that $q_1(\widetilde{\Sig})$ is a splitting surface for $f$. 
But it follows from Lemma~\ref{lem:splitting} that $d(\Sig') \le 2g(\wtil{\Sig})=2g(\Sig')+2$,   
and this contradicts the assumption. 
\end{proof}

We now return to the proof of Lemma~\ref{lem:2-homotopy}. 

\vspace{0.5em}
\noindent
\textit{Proof of Lemma~\ref{lem:2-homotopy}.} 
Let $p_2:J^2  \times D^2 \rightarrow J$ 
denote the projection onto the second coordinate. 
For $u \in D^2$, set $I_u:=p_2(\Cl(\Rsc_a)) \cap p_2(\Cl(\Rsc_b))$. 
Then, $t \in J$ is in $\Int(I_u)$ if and only if $h_u^{-1}(t)$ is a spanning surface for $f$. 
By Lemma~\ref{lem:spanning}, each $I_u$ is a nonempty subset of $J$. 
Furthermore, it follows from Proposition~\ref{prop:regions} (3) that 
each $I_u$ is a closed interval in $J$. 
So $\bigsqcup_{u \in D^2} I_u$ is an (trivial) $I$-bundle over $D^2$. 
Let $\sigma:D^2 \rightarrow \bigsqcup_{u \in D^2} I_u$ be a section of this $I$-bundle. 
Define $\wtil{\sigma}:D^2 \rightarrow \He$ by 
$\wtil{\sigma}(u):=h_u^{-1}(\sigma(u))$.
Recall that $\vph(u)=h_u^{-1}(0)$ for $u \in D^2$. 
The straight line homotopy connecting 
the $0$-section of $J^2 \times D^2 \rightarrow D^2$ to 
$\sigma$ induces the homotopy $\{\vph_r: D^2 \rightarrow \He \mid r \in [0,1]\}$  
with $\vph_0=\vph$ and $\vph_1=\wtil{\sigma}$. 
By Proposition~\ref{prop:regions} (3), 
we may choose $\sigma$ so that for $u \in \partial D^2$,  
$\vph(u)$ is isotopic to $\wtil{\sigma}(u)$ through surfaces 
disjoint from $K^-$ or $K^+$, 
depending on if $\vph(u) \in \He^+$ or $\vph(u) \in \He^-$ holds. 
This means that $\vph_r(u) \in \He^\cup$ for $u \in \partial D^2$ and $r \in [0,1]$. 
Clearly, $\{\vph_r\}$ can be chosen so that it preserves the basepoint. 
Thus, $\vph$ and $\wtil{\sigma}$ represent the same 
element of $\pi_2(\He,\He^\cup)$.  
Applying the homotopy described above, from now on,  
we may assume that the map $\vph$ satisfies the following: 
for any $u \in D^2$, $\Sig_u:=\vph(u)$ is a spanning surface for $f$. 

We think about fixed $u \in D^2$ for a moment. 
By assumption, there exist values $a, b \in J$ such that 
$\Sig^{\pp}:=f^{-1}(a)$ is mostly above $\Sig_u$, 
and $\Sig^{\pn}:=f^{-1}(b)$ is mostly below $\Sig_u$. 
By definition, every loop of $\Sig_u  \cap (\Sig^{\pp} \cup \Sig^{\pn})$ bounds a disk 
in $\Sig^{\pp} \cup \Sig^{\pn}$. 
The following observation is due to Johnson \cite{Jo13}.  

\begin{claim*}\label{clm:compression}
One of the two (possibly both) holds: 
\begin{enumerate}
\item Every loop in $\Sig_u  \cap \Sig^{\pp}$ bounds a disk in $\Sig_u$. 
\item Every loop in $\Sig_u  \cap \Sig^{\pn}$ bounds a disk in $\Sig_u$. 
\end{enumerate}
\end{claim*}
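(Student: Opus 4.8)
The plan is to argue by contradiction. Suppose that neither alternative~(1) nor~(2) holds. Then $\Sig_u\cap\Sig^{\pp}$ contains a loop $\alpha$ essential in $\Sig_u$ and $\Sig_u\cap\Sig^{\pn}$ contains a loop $\beta$ essential in $\Sig_u$; since $\Sig^{\pp}\cap\Sig^{\pn}=\emptyset$, the loops $\alpha$ and $\beta$ are disjoint on $\Sig_u$. As already noted above the Claim, $\alpha$ bounds a disk $D^{\pp}\subset\Sig^{\pp}$ and $\beta$ bounds a disk $D^{\pn}\subset\Sig^{\pn}$, and $D^{\pp}\cap D^{\pn}=\emptyset$ because they lie on the disjoint surfaces $\Sig^{\pp}$ and $\Sig^{\pn}$.

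Next I would normalize these disks using that $M$ is irreducible, which holds because $d(\Sig')>2g(\Sig')+2>2$ forces $M$ to be hyperbolic by Theorem~\ref{thm:dist-hyp}. Replacing $\alpha$ by an innermost loop of $\Sig_u\cap\Sig^{\pp}$ inside $D^{\pp}$ that is still essential in $\Sig_u$, I may assume every loop of $\Int D^{\pp}\cap\Sig_u$ is trivial in $\Sig_u$; an innermost such loop bounds a disk in $D^{\pp}$ and a disk in $\Sig_u$ whose union bounds a ball, and isotoping $\Sig_u$ across that ball (in a neighbourhood small enough to miss $\alpha$, $\Sig^{\pn}$ and $D^{\pn}$) strictly reduces $\Int D^{\pp}\cap\Sig_u$ without altering the rest of the configuration. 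After finitely many such moves $D^{\pp}$ is a compressing disk for $\Sig_u$, and the symmetric procedure makes $D^{\pn}$ a compressing disk for $\Sig_u$. Using that $\Sig^{\pp}$ is mostly above $\Sig_u$ and $\Sig^{\pn}$ is mostly below $\Sig_u$ in the sense of Definition~\ref{def:above_below}, I would then arrange that $D^{\pp}$ lies in $\Cl(U^-_{\Sig_u})$ and $D^{\pn}$ lies in $\Cl(U^+_{\Sig_u})$, so that these are disjoint compressing disks for $\Sig_u$ on opposite sides with disjoint essential boundaries.

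Finally, I would use $D^{\pp}$ and $D^{\pn}$ to compress $\Sig_u$ and, using the region of $M$ between $\Sig^{\pp}$ and $\Sig^{\pn}$ --- which is a copy of $\Sig'\times J$ because both are level sets of $f$ --- to propagate the obstruction to every level, produce a Heegaard surface $T$ of $M$ of genus at most $g(\Sig')+1$ that is a splitting surface for $f$. Then Lemma~\ref{lem:splitting} gives $d(\Sig')\le 2g(T)\le 2g(\Sig')+2$, contradicting the hypothesis; this last part follows the method of Johnson~\cite{Jo13}.

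The main obstacle is exactly this last step: one must check that the compressed surface is genuinely a splitting surface, i.e.\ that \emph{no} level set of $f$ is mostly above or mostly below it, not merely that the two particular level sets $\Sig^{\pp}$ and $\Sig^{\pn}$ fail this test. Closely related is the bookkeeping needed to locate $D^{\pp}$ and $D^{\pn}$ on the correct sides of $\Sig_u$ in the presence of the planar pieces of $\Sig^{\pp}\cap U^-_{\Sig_u}$ and $\Sig^{\pn}\cap U^+_{\Sig_u}$ permitted by Definition~\ref{def:above_below}. By contrast, the contradiction hypothesis, the extraction of $\alpha$, $\beta$, $D^{\pp}$, $D^{\pn}$, and the innermost-ball normalization are routine.
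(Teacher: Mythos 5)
Your proposal does not close the argument: the step you yourself flag as ``the main obstacle'' --- showing that the surface obtained by compressing $\Sig_u$ along $D^{\pp}$ and $D^{\pn}$ is genuinely a splitting surface for $f$, so that Lemma~\ref{lem:splitting} applies --- is precisely the nontrivial content of your route, and nothing in the proposal supplies it. Verifying that \emph{no} level set $f^{-1}(s)$ is mostly above or mostly below the compressed surface requires an analysis of the whole graphic, not just of the two levels $\Sig^{\pp}$ and $\Sig^{\pn}$; there is no reason the compressed surface (whose genus drops to at most $g(\Sig')-1$ after two essential compressions) should fail the mostly-above/mostly-below test at every level. As written, the proof reduces the Claim to an unproven assertion that is at least as hard as the Claim itself.

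The paper's own proof is much more direct and uses neither the distance hypothesis nor Lemma~\ref{lem:splitting}. One compresses $\Sig_u$ along innermost loops of $\Sig_u\cap(\Sig^{\pp}\cup\Sig^{\pn})$ until the resulting collection of surfaces is disjoint from $\Sig^{\pp}\cup\Sig^{\pn}$; some component $S$ of the result still separates $\Sig^{\pp}$ from $\Sig^{\pn}$, hence lies in the product region $\Sig'\times J$ between them and must have genus at least $g(\Sig')$. Since $g(\Sig_u)=g(\Sig')+1$ and each compression along a loop essential in $\Sig_u$ drops the available genus by at least one, at most one such essential compression can have occurred, so at most one of $\Sig^{\pp}$, $\Sig^{\pn}$ can contain a loop of intersection that is essential in $\Sig_u$. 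This genus count is the idea missing from your proposal; if you want to salvage your contradiction scheme, you should replace the splitting-surface construction by this count inside the product region.
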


\begin{proof}
If we compress the surface $\Sig_u$ along innermost loops in 
$\Sig^{\pp} \cup \Sig^{\pn}$ repeatedly, 
we have a collection of surfaces disjoint from both $\Sig^{\pp}$ and $\Sig^{\pn}$. 
The point is that there is a surface $S$ in the collection 
that separates $\Sig^{\pp}$ from $\Sig^{\pn}$, 
and so $S$ is in the product region between $\Sig^{\pp}$ and $\Sig^{\pn}$.  
Note that such $S$ must have the genus at least $g(\Sig')$. 
This means that at most one of the two surfaces $\Sig^{\pp}$ and $\Sig^{\pn}$ 
contains an actual compression for $\Sig_u$ 
(i.e. a loop in $\Sig_u \cap \Sig^{\pp}$ or $\Sig_u \cap \Sig^{\pn}$ 
that is nontrivial in $\Sig_u$) 
because $g(\Sig_u)=g(\Sig')+1$. 
Therefore, either (1) or (2) holds.  
\end{proof}

Put $T'=\Sig^{\pp} \cup \Sig^{\pn}$. 
Take a loop $\ell$ in $\Sig_u \cap T'$ satisfying the following condition: 
\[\mbox{$\ell$ is trivial in $\Sig_u$ and $\ell$ is innermost in $T'$ 
among all the loops of $\Sig_u \cap T'$.} \eqno(*)\]
If we compress $\Sig_u$ along $\ell$ 
and  discard the sphere component,  
then the loop $\ell$ (and possibly some other loops in $\Sig_u  \cap T'$) is removed. 
Since $M$ is irreducible, 
this process can actually be achieved by an isotopy. 
Repeating this process as long as possible, 
all the loops in $\Sig_u \cap T'$ satisfying the condition ($*$)  
are finally removed. 
In particular, the resulting surface is disjoint from $\Sig^{\pp}$ or $\Sig^{\pn}$ 
depending on if (1) or (2) holds. 

We wish to do the above process simultaneously for $u \in D^2$. 
In fact, it is always possible using an argument in Hatcher \cite{Hat76}. 
The following is a sketch of the argument in \cite{Hat76}. 

We will construct 
a smooth family $\{\Theta_{u,r}:\Sig_u \rightarrow M \mid u \in D^2, r \in [0,1]\}$ 
of isotopies such that for any $u \in D^2$, 
$\Theta_{u,0}(\Sig_u)=\Sig_u$ and 
$\Theta_{u,1}(\Sig_u)$ is disjoint from either $K^-$ or $K^+$. 
By the above argument, 
we can see that there exist a finite cover $\{B_i\}$ of $D^2$ 
with $B_i \cong D^2$ and
a family $\{T'_i\}$ of (disconnected) surfaces with the following properties: 
\begin{itemize}
\item $T'_i$ is the union of the two level surfaces $\Sig^{\pp}_i$ and $\Sig^{\pn}_i$ 
	of $f$. 
\item If $u \in B_i$, then 
	$\Sig^{\pp}_i$ is mostly above $\Sig_u$, and 
	$\Sig^{\pn}_i$ is mostly below $\Sig_u$.  
\end{itemize} 

For $u \in D^2$,  
let $\wtil{\mathC}_u$ be the set of intersection loops between  
$\Sig_u$ and $\bigcup_i T'_i$, 
where the union is taken over all $i$'s such that 
$u \in B_i$.  
We denote by $D'(\ell)$ the disk in $T'_i$ bounded by $\ell$ for $\ell \in \wtil{\mathC}_u$. 
(Note that such a disk is unique 
because each component of $T'_i$ is not homeomorphic to $S^2$.) 
For $u \in D^2$, 
let $\mathC_u$ be the subset of $\wtil{\mathC}_u$ consisting of those loop $\ell$ 
such that $\ell$ is trivial in $\Sig_u$, and that 
$D'(\ell)$ contains no other intersection loop that is nontrivial in $\Sig_u$. 
Furthermore, for $\ell \in \mathC_u$, 
we denote by $D(\ell)$ the disk in $\Sig_u$ bounded by $\ell$. 
(Again, note that such a disk is unique 
because $\Sig_u \neq S^2$.) 
For $u \in D^2$, we define the partial order $<'$ on $\mathC_u$ by 
\[\ell <' m \Leftrightarrow D'(\ell) \subset D'(m).\] 

Let $\{B'_i\}$ be a finite cover of $D^2$ obtained by shrinking each $B_i$ slightly 
so that $B'_i \subset \Int(B_i)$ for $i$. 
Take a family $\{\alpha_u:\mathC_u \rightarrow (0,2) \mid u \in D^2\}$ of functions  
with the following properties: 
\begin{itemize}
\item If $\ell,m \in \mathC_u$ and $\ell <' m$, then $\alpha_u(\ell) <\alpha_u(m)$.  
\item If $u \in B'_i$ and $\ell \subset \Sig_u \cap T'_i$, 
	then $\alpha_u(\ell) <1$. 
\item If $u \in \partial B_i$ and $\ell \subset \Sig_u \cap T'_i$, 
	then $\alpha_u(\ell) > 1$. 
\end{itemize}
The function $\alpha_u$ shows the times 
when intersection loops that belong to $\mathC_u$ are eliminated by compressing. 
Let $G$ denote the union of the images 
of $\alpha_u$'s in $D^2 \times [0,2]$. 
Note that for each intersection loop $\ell$, 
the images of the loops corresponding to $\ell$ 
form a 2D sheet over some $B_i$, 
and so $G$ can be written as the union of these sheets. 
We can view $G$ as a ``chart" to compress the surface $\Sig_u$:  
if we compress $\Sig_u$ following this chart upward from $r=0$ to $r=2$, 
then we get the sequence of surfaces. 
Note that the following subtle case may occur:   
if $\ell$ and $m$ are loops of $\Sig_u \cap T'_i$ 
with $D(\ell) \subset D(m)$ and $\alpha_u(m)< \alpha_u(\ell)$, 
then the loop $\ell$ is eliminated automatically before the time $\alpha_u(\ell)$. 
This example shows that we should use the ``reduced" chart $\widehat{G}$ 
rather than $G$, which is obtained from $G$ 
by removing the parts of the sheets corresponding to such $\ell$'s. 

For every $u$, we will define the isotopy $\Theta_{u,r}$ as follows.  
Let $N(\widehat{G})$ denote a small fibered neighborhood of $\widehat{G}$. 
The interval $\{u\} \times [0,2]$ intersects $N(\widehat{G})$ at its subintervals 
$J^{(k)}_u$, where $1 \le k \le n=n(u)$. 
Define $\wtil{\Theta}_{u,r}$ to be the isotopy obtained 
by piecing together the isotopies $\theta^{(1)}_{u,r},\ldots,\theta^{(n)}_{u,r}$ 
in the way suggested by $\widehat{G}$. 
Here each $\theta^{(k)}_{u,r}$ is an isotopy with its $r$-support in $J^{(k)}_u$, 
and corresponds to the compression along a loop in $\mathC_u$. 
See Figure~\ref{fig:chart}. 
Now we can define $\Theta_{u,r}$ as the restriction of $\wtil{\Theta}_{u,r}$ on $[0,1]$.  
\begin{figure}
\includegraphics[width=7cm]{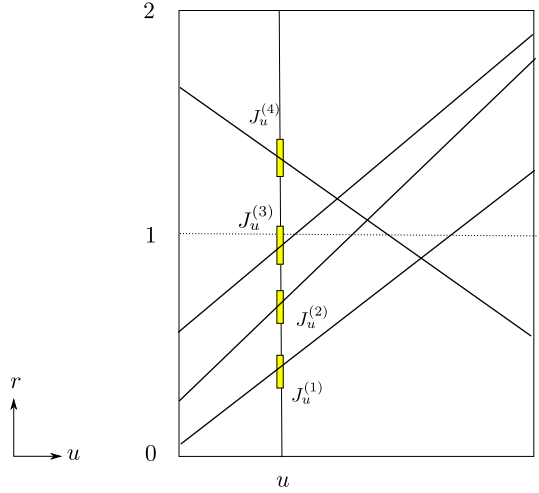}
\caption{The isotopy $\wtil{\Theta}_{u,r}$ is obtained 
by piecing together the isotopies $\theta^{(1)}_{u,r}$, $\theta^{(2)}_{u,r}$ 
$\theta^{(3)}_{u,r}$ and $\theta^{(4)}_{u,r}$. 
(In this example, 
$\wtil{\Theta}_{u,r}$ can be written as the concatenation 
$\theta^{(1)}_{u,r}*\theta^{(2)}_{u,r}*\theta^{(3)}_{u,r}*\theta^{(4)}_{u,r}$ 
of the small isotopies.) 
The $r$-support of $\theta^{(k)}_{u,r}$ is contained in $J^{(k)}_u$.}
\label{fig:chart}
\vspace{-1em}
\end{figure}

It remains to see that we can modify the above construction 
to get the isotopy $\{\Theta_{u,r}\}$ to be smooth for $u \in D^2$. 
It is enough to show that each factor $\theta^{(k)}_{u,r}$ of $\Theta_{u,r}$ 
can be chosen so that it varies smoothly for $u$. 
For simplicity, we will think about the isotopy $\theta^{(1)}_{u,r}$ in the following 
although the same argument applies to any $\theta^{(k)}_{u,r}$. 
The isotopy $\theta^{(1)}_{u,r}$ corresponds 
to the compression along a loop $\ell_u \in \mathC_u$ for each $u$. 
Assume that $\ell_u \subset T'_i$ for any $u$. 
Denote by $D^3(\ell_u)$ the $3$-ball in $M$ bounded by the $2$-sphere 
$D(\ell_u) \cup D'(\ell_u)$. 
(Note that such a $3$-ball is unique because $M \neq S^3$.) 
Let $(D^3,D,D')$ be the standard triple of disks, that is, 
$D$ and $D'$ are the upper and lower hemispheres  
in the boundary $\partial D^3$ of the standard $3$-ball $D^3$, respectively. 
There is an identification $\phi_u:(D^3(\ell_u),D(\ell_u),D'(\ell_u)) \rightarrow (D^3,D,D')$ 
for every $u$. 
Then the arguments in \cite{Hat76} together with the Smale Conjecture 
(:the space $\Diff(D^3 \,\mathrm{rel}.\, \partial D^3)$ is contractible), 
which is proved in \cite{Hat83}, 
show that $\phi_u$ can be chosen 
so that it varies smoothly for $u$. 
\footnote[2]{More specifically, 
we need the arguments at the end of \S1 in \cite{Hat76}, 
where the sought isotopy, denoted by $h_{tu}$ in that paper, 
is constructed.  
It starts by taking a suitable triangulation of $D^n$ 
and then proceeds by extending the isotopy over the $k$-skeleton inductively. 
The homotopy group $\pi_k(\Diff(D^3 \,\mathrm{rel}.\, D))$ appears  
as an obstruction to extending a map. 
(As we work in the smooth category, we use the Smale Conjecture 
instead of the Alexander trick.)}
Now we can define $\theta^{(1)}_{u,r} \equiv \phi_u^{-1} \circ F_r \circ \phi_u$ 
on $D(\ell_u) \subset \Sig_u$ and $\theta^{(1)}_{u,r} \equiv \Theta_{u,0}$ 
on the complement of a small neighborhood of $D(\ell_u)$ in $\Sig_u$, 
where $\{F_r:D^3 \rightarrow D^3 \mid r \in [0,1]\}$ 
is an isotopy that carries $D$ to $D'$ across $D^3$. 
Therefore, it follows that $\{\Theta_{u,r}\}$ is smooth for $u \in D^2$. 

Finally, we see that $\Theta_{u,1}(\Sig_u) \in \He^\cup$ for  $u \in D^2$. 
Let $u$ be any point in $D^2$. 
Take a path $\rho:[0,1] \rightarrow D^2$ 
with $\rho(0)=e_0$ and $\rho(1)=u$. 
It suffices to show that 
the path $\wtil{\rho}:[0,1] \rightarrow \He$ defined by 
$\wtil{\rho}(t):=\Theta_{\rho(t),1}(\Sig_{\rho(t)})$ 
is wholly contained in $\He^\cup$. 

For brevity, we denote by $\Sig_t$ the surface $\Theta_{\rho(t),1}(\Sig_\rho(t))$ 
for $t \in [0,1]$ in the following. 
The cover $\{B_i\}$ of $D^2$ induces the cover 
$\{I_k \mid 0 \le k \le n\}$ of $[0,1]$ by finitely many closed intervals. 
By passing to a subcover if necessary, 
we may assume that $I_k \cap I_j=\emptyset$ if $|k-j| > 1$. 
As we have seen above, 
there exists a family $\{\Sig^{\pp}_k \cup \Sig^{\pn}_k\}$ ($=\{T'_k\}$)  
of level surfaces of $f$ and the following hold: 
\begin{itemize}
\item $\Sig^{\pp}_k$ is mostly above $\Sig_t$ if $t \in I_k$. 
	Similarly, $\Sig^{\pn}_k$ is mostly below $\Sig_t$ if $t \in I_k$.
\item For each $k$, one of the two surfaces $\Sig^{\pp}_k$ and $\Sig^{\pn}_k$ 
	is disjoint from $\Sig_t$ if $t \in I_k$. 
\end{itemize}
\noindent 
As is naturally expected, 
the following holds:  

\begin{claim*}
Suppose that $t \in I_k$. 
If $\Sig_t \cap \Sig^{\pp}_k=\emptyset$ and $\Sig_t \cap \Sig^{\pn}_k \neq \emptyset$, 
then $\Sig_t \in \He^-$. 
Similarly, 
if $\Sig_t \cap \Sig^{\pn}_k=\emptyset$ and $\Sig_t \cap \Sig^{\pp}_k \neq \emptyset$, 
then $\Sig_t \in \He^+$. 
\end{claim*}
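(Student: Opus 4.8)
The plan is to show that under these hypotheses $\Sig_t$ is a genus $g(\Sig')+1$ Heegaard surface of the handlebody $M \setminus \Int(N(K^+))$, which is exactly the defining condition for membership in $\He^-$.

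First, since the isotopy $\Theta$ was constructed so that $\Sig_t=\Theta_{\rho(t),1}(\Sig_{\rho(t)})$ is disjoint from $K^-$ or $K^+$, and in the case at hand the compressions are performed along the loops of $\Sig_u \cap \Sig^{\pp}_k$ (all of which are trivial in $\Sig_u$), the surface $\Sig_t$ is disjoint from $K^+$ and hence lies in $V^- := M \setminus \Int(N(K^+))$, a genus $g(\Sig')$ handlebody. Write $\Sig^{\pp}_k=f^{-1}(a_k)$ for some $a_k \in \Int(J)$, so that $K^+=f^{-1}(1)$ is contained in the genus $g(\Sig')$ handlebody $f^{-1}([a_k,1])$ whose frontier in $M$ is $\Sig^{\pp}_k$. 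Because $\Sig^{\pp}_k$ is mostly above $\Sig_t$ while $\Sig_t \cap \Sig^{\pp}_k=\emptyset$, and $\Sig^{\pp}_k$ has genus $g(\Sig') \ge 2$ and so is not contained in a disk of itself, the whole of $\Sig^{\pp}_k$ lies on the positive side $U^+_{\Sig_t}$; moreover the compressions, being carried out along $2$-disks lying in $\Sig^{\pp}_k=f^{-1}(a_k)$, leave $\Sig_t$ on the side $f^{-1}([-1,a_k))$ of $\Sig^{\pp}_k$ — the one not containing $K^+$ — which is the point at which a look back at the construction of $\Theta$ is needed. Consequently $f^{-1}([a_k,1]) \supset K^+$ is disjoint from $\Sig_t$ with frontier $\Sig^{\pp}_k \subset \Int(U^+_{\Sig_t})$, hence $f^{-1}([a_k,1]) \subset U^+_{\Sig_t}$, and $V^-$ is isotopic to $f^{-1}([-1,a_k])$ keeping $\Sig_t$ fixed.

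Next I would verify that $\Sig_t$ is a genus $g(\Sig')+1$ Heegaard surface of $V^-$, equivalently of $f^{-1}([-1,a_k])$. The component $U^-_{\Sig_t}$ of $M \setminus \Sig_t$ is disjoint from $f^{-1}([a_k,1])$, hence is a genus $g(\Sig')+1$ handlebody contained in $f^{-1}([-1,a_k))$, and this is the handlebody side of the splitting. What remains is the assertion that the other piece, $U^+_{\Sig_t}\setminus \Int(f^{-1}([a_k,1]))$ — a compact submanifold of $M$ with boundary $\Sig_t \sqcup \Sig^{\pp}_k$ — is a compression body with $\partial_+=\Sig_t$ and $\partial_-=\Sig^{\pp}_k$. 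I expect this to be the main obstacle, because the bare fact that a Heegaard surface of $M$ lies in a handlebody side of a Heegaard splitting of $M$ does not by itself make it a Heegaard surface of that handlebody; one must use how $\Sig_t$ was produced. Following Johnson \cite{Jo13}, I would transport the sweep-out $h_u$ associated with $\Sig_u$ through the compressing isotopy to a sweep-out of $M$ having $\Sig_t$ as a level set; its restriction to $f^{-1}([-1,a_k])$, truncated at the level $\Sig^{\pp}_k$, then exhibits $U^+_{\Sig_t}\setminus \Int(f^{-1}([a_k,1]))$ as $\Sig^{\pp}_k \times [0,1]$ with a single $1$-handle attached along the top, that is, as a compression body — the genus bound $g(\Sig_t)=g(\Sig')+1$, together with the Claim proved just above (which bounds the number of genuine compressions), keeping this count under control. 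Once this is established, $\Sig_t$ is a genus $g(\Sig')+1$ Heegaard surface of $M \setminus \Int(N(K^+))$, hence $\Sig_t \in \He^-$ by definition; the second assertion follows by the identical argument with the roles of $+$ and $-$, of $K^+$ and $K^-$, and of $\Sig^{\pp}_k$ and $\Sig^{\pn}_k$, interchanged.
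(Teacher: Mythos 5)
The central step of your argument --- that the region between $\Sig_t$ and $\Sig^{\pp}_k$ is a compression body, i.e.\ that $\Sig_t$ is genuinely a Heegaard surface of the handlebody $M \setminus \Int(N(K^+))$ --- is exactly the content of the Claim, and the way you propose to establish it does not work. Restricting the (transported) sweep-out $h_{\rho(t)}$ to $f^{-1}([-1,a_k])$ does not yield a sweep-out of that handlebody: the spine $h_{\rho(t)}^{-1}(\pm 1)$ has no reason to lie in $f^{-1}([-1,a_k])$, and the level sets of $h_{\rho(t)}$ need not meet $\Sig^{\pp}_k$ in any controlled way, so the ``truncation at the level $\Sig^{\pp}_k$'' does not exhibit the region as $\Sig^{\pp}_k \times [0,1]$ with a single $1$-handle attached. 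More fundamentally, the statement cannot be deduced from the position of $\Sig_t$ at a single time: a genus $g(\Sig')+1$ Heegaard surface of $M$ that happens to lie inside the genus $g(\Sig')$ handlebody $M \setminus \Int(N(K^+))$ need not be a Heegaard surface of that handlebody (it can be knotted there), so some use of the history of the path $\wtil{\rho}$ is unavoidable; your proposal never uses it.

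The paper's proof is an induction on $k$ along the intervals $I_0,\dots,I_n$ covering $[0,1]$. The base case is $t \in I_0$, where $\Sig_t \in \He^\cap$ by construction. For the inductive step one takes $t_0 \in I_{k-1}\cap I_k$ and compares the intersection pattern of $\Sig_{t_0}$ with the level surfaces at steps $k-1$ and $k$: if the pattern persists, the induction hypothesis applies directly; if it flips (say $\Sig_{t_0}$ meets $\Sig^{\pp}_{k-1}$ and $\Sig^{\pn}_{k}$ but misses the other two), then $\Sig_{t_0}$ is bicompressible in $M\setminus(K^+\cup K^-)\cong \Sig'\times(-1,1)$ and separates $K^-$ from $K^+$, and the key external input is Johnson's result that a genus $g(\Sig')+1$ bicompressible surface in $\Sig'\times J$ separating the two boundary components is reducible; combined with irreducibility of $M$ this shows $\Sig_{t_0}\in\He^\cap$. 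The case where $\Sig_{t_0}$ misses both level surfaces at step $k-1$ is handled by backtracking to the first index where an intersection reappears. None of these ingredients --- the induction along the path, the bicompressibility observation, or the reducibility of bicompressible surfaces in a product --- appear in your proposal, and without them the conclusion is out of reach.
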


\begin{proof}
The proof is by induction on $k$. 
The following argument is based on the idea in \cite{Jo13}. 
By definition, $\Sig_t \in \He^\cap$ for $t \in I_0$. 
Thus our claim holds on $I_0$. 
So, in what follows, 
we assume that $k>0$ 
and that our claim holds on any interval $I_j$ with $0 \le j<k$. 

Let $t \in I_k$. 
Without loss of generality, 
we may assume that 
$\Sig_t \cap \Sig^{\pp}_k=\emptyset$ and $\Sig_t \cap \Sig^{\pn}_k \neq \emptyset$. 
Fix $t_0 \in I_k \cap I_{k-1}$. 
Note that 
$\Sig_{t_0}$ and $\Sig_t$ are isotopic through surfaces disjoint from $K^+$. 
Thus it is enough to show that $\Sig_{t_0} \in \He^-$. 
There are three cases to consider. 

\vspace{0.5em}
\noindent 
\textbf{Case~1:} 
$\Sig_{t_0} \cap \Sig^{\pp}_{k-1}=\emptyset$ 
and $\Sig_{t_0} \cap \Sig^{\pn}_{k-1} \neq \emptyset$ hold. 

By the assumption of induction,  
this implies that $\Sig_{t_0} \in \He^-$ and our claim holds in this case. 

\vspace{0.5em}
\noindent 
\textbf{Case~2:} 
$\Sig_{t_0} \cap \Sig^{\pp}_{k-1} \neq \emptyset$ 
and $\Sig_{t_0} \cap \Sig^{\pn}_{k-1}=\emptyset$ hold. 

We will see 
that $\Sig_{t_0} \in \He^\cap$. 
This is same as saying 
that $\Sig_{t_0}$ is a Heegaard surface 
of $M \setminus \Int(N(K^+ \cup K^-)) \cong \Sig' \times J$,  
where $N(K^+ \cup K^-)$ is a sufficiently small neighborhood of $K^+ \cup K^-$.  
Since $\Sig_{t_0} \cap \Sig^{\pn}_{k-1}=\Sig_{t_0} \cap \Sig^{\pp}_k=\emptyset$, 
$\Sig_{t_0}$ separates $K^+$ from $K^-$. 
First, we see that 
$\Sig_{t_0}$ is bicompressible in $M \setminus (K^+ \cup K^-)$. 
By assumption, 
there exists a loop $\ell \subset \Sig_{t_0} \cap \Sig^{\pp}_{k-1}$ 
bounding a disk $D^- \subset \Sig^{\pp}_{k-1}$ 
such that $\ell$ is nontrivial in $\Sig_{t_0}$. 
Similarly, there exists a loop $m \subset \Sig_{t_0} \cap \Sig^{\pn}_{k}$ 
bounding a disk $D^+ \subset \Sig^{\pn}_{k}$ 
such that $m$ is nontrivial in $\Sig_{t_0}$. 
Since $D^-$ and $D^+$ are in the opposite side of $\Sig_{t_0}$ to each other, 
$\Sig_{t_0}$ is bicompressible in $M \setminus (K^+ \cup K^-)$.

It is known that any genus $g(\Sig')+1$ bicompressible surface 
in $\Sig' \times J$ separating $\Sig' \times \{1\}$ from $\Sig' \times \{-1\}$ 
must be reducible (cf. \cite{Jo13}). 
This means that there exists a $2$-sphere 
$P \subset M \setminus (K^+ \cup K^-)$ 
intersecting $\Sig_{t_0}$ at a single nontrivial loop in $\Sig_{t_0}$. 
Since $M$ is irreducible, 
$P$ cuts $(M,\Sig_{t_0})$ into 
the two Heegaard splittings: one is a genus $g(\Sig')$ Heegaard splitting of $M$ 
and the other is a genus $1$ Heegaard splitting of $S^3$. 
If we denote by $S$ the genus $g(\Sig')$ surface obtained by
cutting $\Sig_{t_0}$ along $P$, 
then $S$ still separates $K^+$ from $K^-$. 
Thus, $S$ is isotopic to $\Sig'$ in the complement of $K^+ \cup K^-$. 
This shows that $\Sig_{t_0}$ is a genus $g(\Sig')+1$ Heegaard surface 
in $M \setminus \Int(N(K^+ \cup K^-))$. 
Therefore, we conclude that $\Sig_{t_0} \in \He^\cap$ in this case. 

\vspace{0.5em}
\noindent 
\textbf{Case~3:} 
$\Sig_{t_0} \cap \Sig^{\pp}_{k-1}=\Sig_{t_0} \cap \Sig^{\pn}_{k-1}=\emptyset$ hold. 

Let $j$ denote the minimal integer with the following property: 
for any $j<j' \le k$ and $t \in I_{j'}$, 
$\Sig_t \cap \Sig^{\pp}_{j'}=\Sig_t \cap \Sig^{\pn}_{j'}=\emptyset$ hold. 
If $j=0$, then $\Sig_{t_0}$ is isotopic to $\Sig_0$ 
through surfaces disjoint from $K^+ \cup K^-$. 
This shows that $\Sig_{t_0} \in \He^\cap$. 
So we may assume that $j>0$ in the following. 
Let $t_1 \in I_j \cap I_{j+1}$. 

First, we assume that $\Sig_{t_1} \cap \Sig^{\pp}_j=\emptyset$ 
and that $\Sig_{t_1} \cap \Sig^{\pn}_j \neq \emptyset$. 
By the assumption of induction, 
it follows that $\Sig_{t_1} \in \He^-$. 
Since $\Sig_{t_1}$ and $\Sig_{t_0}$ are 
isotopic in $M \setminus (K^+ \cup K^-)$, 
we have $\Sig_{t_0} \in \He^-$ in this case. 

Next, we assume that $\Sig_{t_1} \cap \Sig^{\pp}_j \neq \emptyset$ 
and that $\Sig_{t_1} \cap \Sig^{\pn}_j=\emptyset$. 
Then, there exists a compression disk $D^- \subset \Sig^{\pp}_j$ for $\Sig_{t_1}$. 
Since $\Sig_{t_1}$ and $\Sig_{t_0}$ are 
isotopic in $M \setminus (K^+ \cup K^-)$, 
$\Sig_{t_0}$ has a compression disk disjoint from $K^-$ as well. 
On the other hand, as we have seen above, 
$\Sig^{\pn}_k$ contains a compression disk $D^+$ for $\Sig_{t_0}$ 
lying in the opposite side of $\Sig_{t_0}$ to $D^-$. 
Thus, $\Sig_{t_0}$ is bicompressible in $M \setminus (K^+ \cup K^-)$. 
Now applying the same argument as in Case~2, 
we have $\Sig_{t_0} \in \He^\cap$ and this completes the proof. 
\end{proof}

The above claim implies that 
the image of $\wtil{\rho}:[0,1] \rightarrow \He$ 
is contained in $\He^\cup$. 
In particular, we have $\Theta_{u,1}(\Sig_{u}) \in \He^\cup$. 
Therefore, we conclude that $[\vph]=0 \in \pi_2(\He,\He^\cup)$ 
and this finishes the proof of Lemma~\ref{lem:2-homotopy}. 
\qed

\section
{The isotopy subgroup of a Heegaard splitting of a handlebody}
\label{sec:conclusion}

\subsection{Proof of Theorem~\ref{thm:handlebody}}
\label{sub:handlebody}

We now give a proof of Theorem~\ref{thm:handlebody}.  
Let $V$ be a genus $g(V) \ge 2$ handlebody, and 
let $(V,\Sig)$ be a genus $g(V)+1$ Heegaard splitting of $V$. 
Fix a complete system $E_1,\ldots,E_{g(V)}$ of meridian disks for $V$.   
Consider a  properly embedded, boundary parallel arc $I$ in $V$ 
that is disjoint from $\bigcup^{g(V)}_{i=1} E_i$. 
The surface $\Sig$ can be viewed as the boundary of 
a small neighborhood $N(\partial V \cup I)$ of $\partial V \cup I$.  
In the same spirit in \cite{JM13},
we define the space $\Unk(V,I)$ of unknotting arcs to be $\Diff(V)/\Diff(V,I)$. 
Then, the following holds: 
\begin{theorem}[Scharlemann {\cite[Theorem~5.1]{Sc13}}]\label{thm:unknot}
The group $\Isot(V,\Sig)$ is isomorphic to $\pi_1(\Unk(V,I))$. 
\end{theorem}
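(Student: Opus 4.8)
The plan is to run the fibration argument underlying Theorem~\ref{thm:large_manifolds}, with the unknotting arc $I$ in place of the Heegaard surface. First I would verify that, exactly as for the projection $\Diff(V)\to\He(V,\Sig)$ used in \cite{JM13}, the quotient map
\[\Diff(V)\longrightarrow \Diff(V)/\Diff(V,I)=\Unk(V,I)\]
is a locally trivial fibre bundle with fibre $\Diff(V,I)$; local sections come from parametrized tubular neighbourhoods of nearby arcs together with the isotopy extension theorem. Plugging this bundle into the long exact sequence of homotopy groups and using $\pi_k(\Diff(V))=0$ for $k\ge 1$ --- the Earle--Eells \cite{EE69} and Hatcher \cite{Hat76} input already invoked in the proof of Theorem~\ref{thm:large_manifolds} --- gives $\pi_k(\Unk(V,I))=0$ for $k\ge 2$ and an exact sequence
\[1\longrightarrow \pi_1(\Unk(V,I))\longrightarrow \pi_0(\Diff(V,I))\longrightarrow \pi_0(\Diff(V)).\]
Writing $\MCG(V,I):=\pi_0(\Diff(V,I))$ and $\Isot(V,I):=\ker(\MCG(V,I)\to\MCG(V))$ in analogy with $\Isot(V,\Sig)$, exactness identifies $\pi_1(\Unk(V,I))\cong\Isot(V,I)$, so the theorem comes down to producing a natural isomorphism $\Isot(V,I)\cong\Isot(V,\Sig)$, or equivalently a homotopy equivalence $\Unk(V,I)\simeq\He(V,\Sig)$ compatible with the projections from $\Diff(V)$.

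Next I would compare the three stabilizers. Fix the neighbourhood $N:=N(\partial V\cup I)$, so that $\Sig$ is the frontier of $N$ in $V$ and $\Sig$ splits $V$ into $N$ --- a compression body with $\partial_- N=\partial V$ and a single extra $1$-handle $H$ whose core is isotopic to $I$ --- and a genus $g(V)+1$ handlebody. Since every diffeomorphism of $V$ preserves $\partial V$ setwise, it preserves $\Sig$ if and only if it preserves $N$ (the closure of the component of $V\setminus\Sig$ meeting $\partial V$); hence $\Diff(V,\Sig)=\Diff(V,N)$ as subgroups of $\Diff(V)$. Now consider the common subgroup $\Diff(V,N)\cap\Diff(V,I)$. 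Its inclusion into $\Diff(V,I)$ is a homotopy equivalence, by the standard fact that regular neighbourhoods of $\partial V\cup I$ form a contractible space: any diffeomorphism, or family of diffeomorphisms, preserving $I$ can be pushed by an ambient isotopy fixing $\partial V\cup I$ to one preserving the fixed neighbourhood $N$. Its inclusion into $\Diff(V,N)$ induces an isomorphism on $\pi_0$: a diffeomorphism preserving $N$ restricts to a self-diffeomorphism of the compression body $N$, which carries the cocore of $H$ to an isotopic disk --- every essential disk in $N$ is isotopic to that cocore, by an innermost-disk argument against a fixed copy of it, using irreducibility of $N$ --- and so after an isotopy it preserves $H$ and hence the core arc $I$. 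Granting that this second inclusion is also a $\pi_1$-isomorphism (see the next paragraph), passing to quotients by $\Diff(V)$ and comparing the long exact sequences of the resulting bundles with the five lemma yields
\[\Unk(V,I)\simeq \Diff(V)/(\Diff(V,N)\cap\Diff(V,I))\simeq \Diff(V)/\Diff(V,N)=\He(V,\Sig),\]
whose fundamental group is $\Isot(V,\Sig)$ by Theorem~\ref{thm:large_manifolds}. This gives $\pi_1(\Unk(V,I))\cong\Isot(V,\Sig)$.

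The delicate point --- and the step I expect to be the main obstacle --- is the claim that $\Diff(V,N)\cap\Diff(V,I)\hookrightarrow\Diff(V,N)$ is a $\pi_1$-isomorphism, i.e.\ that the straightening isotopies above can be carried out in a one-parameter family with prescribed endpoints. This amounts to capping off loops in the space of unknotting arcs of the compression body $N$, and therefore requires that this space be simply connected, not merely connected; uniqueness of the arc up to isotopy is not by itself enough. I would attack this by cutting $N$ along the cocore of $H$, reducing to a statement about the diffeomorphism group of $\partial V\times J$ with part of its boundary controlled, which again falls under the contractibility results of Earle--Eells \cite{EE69} and Hatcher \cite{Hat76} (and, in the smooth category, the Smale conjecture \cite{Hat83}, exactly as these are used in Section~\ref{sec:poof}). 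Everything else --- the bundle structures, the two long exact sequences, the five-lemma comparison, and compatibility of all maps with the projection to $\MCG(V)$ --- is then routine.
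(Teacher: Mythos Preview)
The paper does not give its own proof of Theorem~\ref{thm:unknot}; the statement is quoted directly from Scharlemann \cite[Theorem~5.1]{Sc13} with no argument supplied. The nearest the paper comes is in the proof of Claim~\ref{clm:family_parallelism}, where it invokes from \cite{Sc13} the fact that the projections $\Diff(V)\to\Diff(V)/\Diff(V,\Sig)$ and $\Diff(V)\to\Diff(V)/\Diff(V,I)$ are homotopy equivalent --- which is exactly the equivalence $\He(V,\Sig)\simeq\Unk(V,I)$ you set out to establish --- so there is nothing in the present paper to compare your argument against beyond that citation.

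That said, your outline is a reasonable reconstruction and is correct in its broad strokes: the fibration $\Diff(V)\to\Unk(V,I)$ together with $\pi_k(\Diff(V))=0$ for $k\ge 1$ reduces the theorem to comparing the stabilizers $\Diff(V,I)$ and $\Diff(V,\Sig)=\Diff(V,N)$, and you correctly isolate the non-formal step as showing that $\Diff(V,N)\cap\Diff(V,I)\hookrightarrow\Diff(V,N)$ is a weak equivalence --- equivalently, that the space of core arcs of the $1$-handle in $N$ is simply connected, not merely connected. Note that even the bare $\pi_0$-injectivity of this inclusion, which is all you need once your first paragraph has supplied $\pi_1(\Unk(V,I))\cong\Isot(V,I)$, already requires straightening a one-parameter family of such arcs, so the step you flag as delicate is genuinely where the content lies. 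Your proposed attack --- cut $N$ along the cocore and appeal to the contractibility results for diffeomorphism groups of $F_g\times J$ --- is the natural one, though carrying it out cleanly takes some care with boundary conditions and with reassembling after the cut.
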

\noindent
Thus, it suffices to show that  $\pi_1(\Unk(V,I))$ is finitely presented. 
 
Fix a parallelism disk $E$ for $I$ disjoint from  
$\bigcup_{i=1}^{g(V)} E_i$. 
Furthermore, fix a spine $K$ of $V$ such that 
$K \cap E=\emptyset$ and 
$K$ intersects each $E_i$ at a single point. 
We now consider the two subspaces of $\Unk(V,I)$ defined as follows: 
\[U_1:=\left\{I' \in \Unk(V,I) \mid 
\mbox{$I'$ admits a parallelism disk $E'$ with $E' \cap K=\emptyset$}\right\},\]
\[U_2:=\left\{I' \in \Unk(V,I) \mid I' \cap \bigcup_{i=1}^{g(V)} E_i=\emptyset\right\}.\]

\noindent Note that $U_1$, $U_2$ and $U_1 \cap U_2$ are all connected. 

The group $\pi_1(U_1)$ is 
identical to the group $\Ff_{E}$ in \cite{Sc13}, 
which is called the {\em freewheeling} subgroup in that paper.  
This group is an extension of $\pi_1(\partial V)$ by $\mathbb{Z}$, 
and generated by $\lambda_i$, $\mu_i$ ($1 \le i \le g(V)$) and $\rho$ 
shown in Figure~\ref{fig:wheeling}. 
For each $i$, 
$\lambda_i$ is represented by an isotopy of parallelism disk $E$ 
along a longitudinal loop that intersects $\partial E_i$ at a single point. 
Similarly, $\mu_i$ is represented by an isotopy of the parallelism disk $E$ 
along a meridional loop corresponding to $\partial E_i$. 
The set $\{\lambda_i,\mu_i \mid 1 \le i \le g(V)\}$ 
corresponds to a generating set of $\pi_1(\partial V)$. 
$\rho$ is defined to be the half rotation of the parallelism disk $E$. 
Let $P$ denote the planar surface obtained by cutting $\partial V$ along 
simple closed curves $\partial E_1,\ldots,\partial E_{g(V)}$.  
Then, the group $\pi_1(U_2)$ is isomorphic to 
the $2$-braid group $B_2(P)$ of $P$. 
Following \cite{Sc13}, we define 
the {\em anchored} subgroup $\Af_{E_1,\ldots,E_{g(V)}}$ 
of $\pi_1(U_2)$ as follows. 
This is generated by $2g(V)$ elements $\alpha_i$, $\alpha'_i$ ($1 \le i \le g(V)$)
shown in Figure~\ref{fig:anchor}.  
Here each of $\alpha_i$ and $\alpha'_i$ is represented by an isotopy of $I$ 
that moves the one endpoint $p_1$ of $I$ along a meridional loop and 
fixes the other endpoint $p_0$. 
Note that we can write $\alpha'_i=\lambda_i^{-1}\alpha_i\lambda_i$ 
as elements of $\pi_1(\Unk(V,I))$. 
The group $\pi_1(U_2)$ is generated by $\Af_{E_1,\ldots,E_{g(V)}}$ and $\rho$. 
\begin{figure}
\includegraphics[width=9cm]{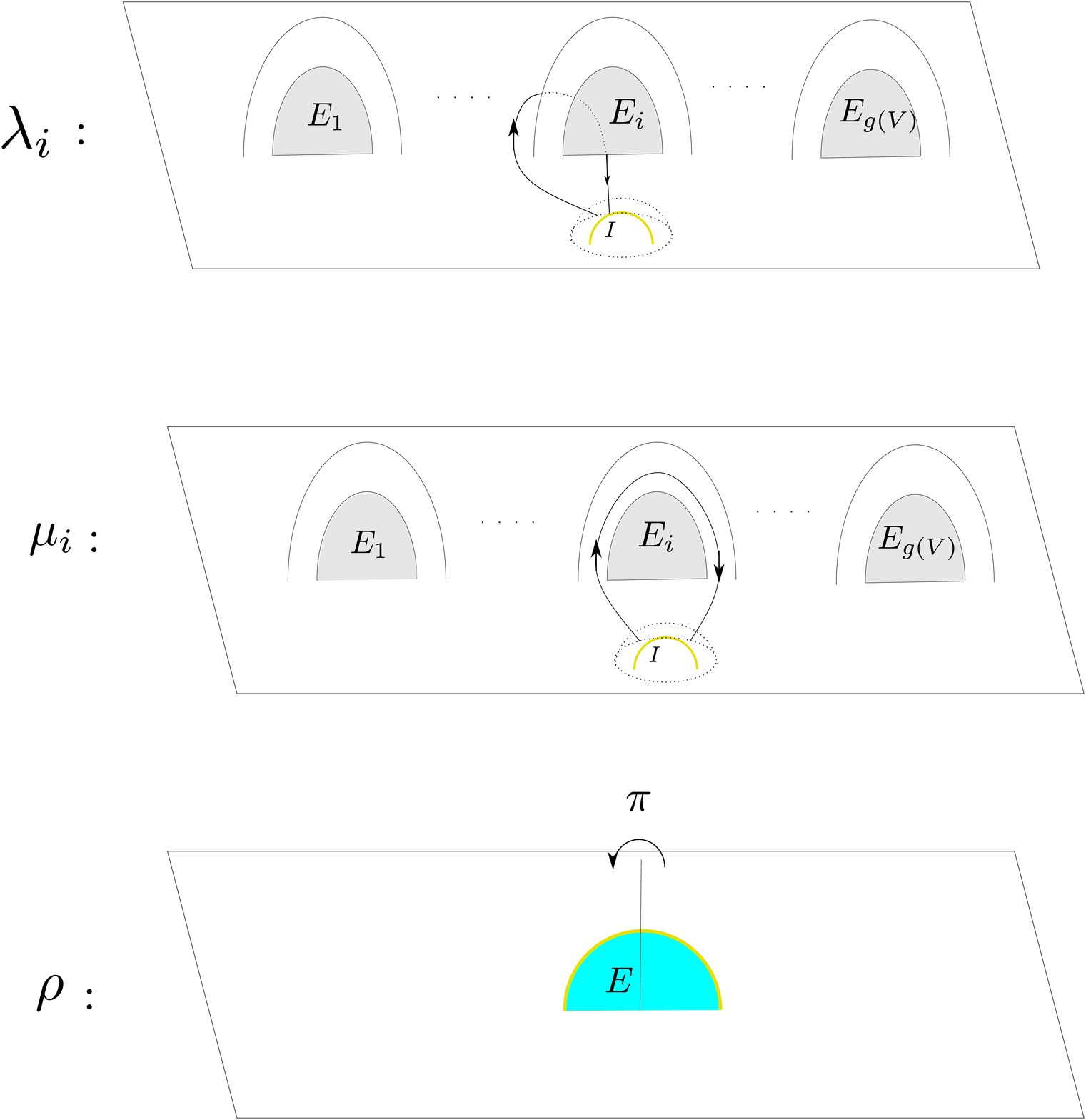}
\caption{The group $\pi_1(U_1)$ is generated by $2g(V)+1$ elements.}
\label{fig:wheeling}
\end{figure}

\begin{figure}
\includegraphics[width=16cm]{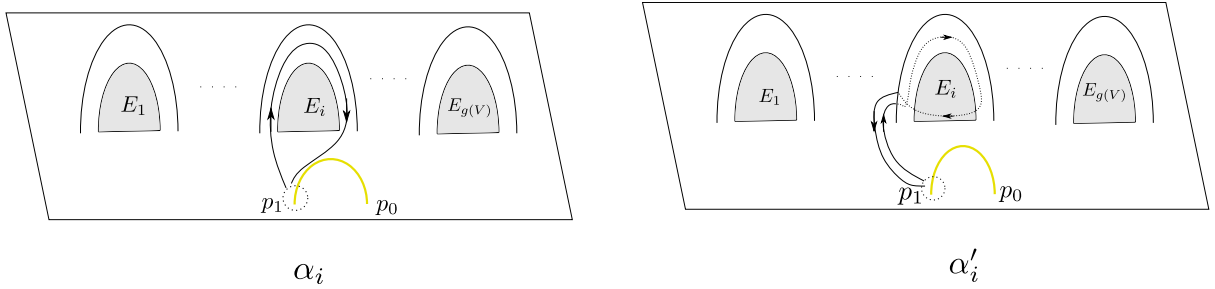}
\caption{The group $\Af_{E_1,\ldots,E_{g(V)}}  \subset \pi_1(U_2)$ 
is generated by $2g(V)$ elements.}
\label{fig:anchor}
\end{figure}

The groups $\pi_1(U_1)$, $\pi_1(U_2)$ and $\pi_1(U_1 \cap U_2)$ are all finitely presented. 
By van Kampen's theorem, the proof is finished 
if the following is shown: 

\begin{lemma}\label{lem:factorization}
The inclusion $U_1 \cup U_2 \rightarrow \Unk(V,I)$ is a homotopy equivalence. 
\end{lemma}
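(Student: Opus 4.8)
The plan is to mimic the strategy used for Theorem~\ref{thm:homotopy_equivalence}: reduce the homotopy equivalence to the statement that $\pi_2(\Unk(V,I), U_1 \cup U_2)=0$ and $\pi_1(U_1\cup U_2)\to\pi_1(\Unk(V,I))$ is an isomorphism, and then attack the relative $\pi_2$-group by a graphic/sweep-out argument. First I would record that $\Unk(V,I)$, $U_1$, $U_2$ and $U_1\cap U_2$ all have the homotopy type of a CW complex with vanishing higher homotopy groups: for $U_1\cong\Diff(V)/\Diff(V,I\text{ with disk}\ E'\cap K=\emptyset)$ and for $U_2$ one identifies the relevant components with classifying spaces of the freewheeling and braid groups (via Theorem~\ref{thm:unknot} and its analogues, plus $\pi_k(\Diff(V))=0$ for $k\ge1$ from Earle--Eells/Hatcher), so $\pi_k=0$ for $k\ge2$; a Mayer--Vietoris/Hurewicz argument then gives $\pi_k(U_1\cup U_2)=0$ for $k\ge 2$, and the same for $\Unk(V,I)$ itself. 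Hence, as in the proof of Lemma~\ref{lem:pi1}, it suffices to prove $\pi_2(\Unk(V,I),U_1\cup U_2)=0$, and by the analogue of the surjectivity remark in \cite{Sc13} (that $\pi_1(\Unk(V,I))$ is generated by the freewheeling and anchored subgroups) the $\pi_1$-surjection is already known, so only the relative $\pi_2$ needs work.

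Next I would set up the sweep-out machinery adapted to the arc $I$. Given a map $\vph:(D^2,\partial D^2,e_0)\to(\Unk(V,I),U_1\cup U_2,I)$, lift it through the fibration $\Diff(V)\to\Unk(V,I)$ to $\wtil\vph:D^2\to\Diff(V)$, and set $I_u:=\wtil\vph(u)(I)$. Fix a sweep-out $f:V\to J$ adapted to the spine $K$ (with $f^{-1}(\pm1)$ a spine/graph structure so that $f^{-1}(t)$ is a genus-$g(V)+1$ Heegaard surface of $V$ for $t\in\Int(J)$), and build a family $\{h_u\}$ of sweep-outs of $(V,\Sig)$ with $h_u^{-1}(0)$ equal to the boundary of a thin neighborhood of $\partial V\cup I_u$; form the associated graphic $\Gamma\subset J^2\times D^2$ and the regions $\Rsc_a$, $\Rsc_b$ exactly as in Section~\ref{sec:sweep-out}. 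The key step, in analogy with Lemma~\ref{lem:spanning}, is to show that for every $u$ the graphic is "spanned" — equivalently that for every $u$ there is a $t$ with $h_u^{-1}(t)$ a spanning surface for $f$; here one uses that a splitting surface would force the distance of the genus-$g(V)$ "splitting" to be too small (in the handlebody setting the meridian disk system $E_1,\dots,E_{g(V)}$ plays the role of $\mathcal D$, and $\partial$-parallelism of $I$ plus $g(V)\ge2$ provides the Euler-characteristic obstruction analogous to Proposition~\ref{prop:frontiors}), so by Lemma~\ref{lem:splitting}-type reasoning no $h_u$ can split $f$, and since it also cannot be "neither spanned nor split" the graphic is spanned. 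Then the $I$-bundle $\bigsqcup_u I_u$ over $D^2$ (where $I_u$ is the closed interval of $t$'s giving spanning surfaces) is trivial, its section gives a homotopy of $\vph$ so that each $\vph(u)$ corresponds to a spanning surface, and then Johnson's Claim and the Hatcher-style parametrized innermost-disk argument let us isotope the family so that each $\Sig_u$ (the boundary of $N(\partial V\cup I_u)$) is made disjoint from $K$ or from all the $E_i$ — i.e. pushed into $U_1$ or into $U_2$ — with $\partial D^2$ carried into $U_1\cup U_2$ throughout.

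Finally I would run the induction along a path $\rho:[0,1]\to D^2$, exactly as in the proof of Lemma~\ref{lem:2-homotopy}: cover $[0,1]$ by intervals $I_k$ on each of which a fixed pair of level surfaces $\Sig^{\pp}_k,\Sig^{\pn}_k$ of $f$ sandwich $\Sig_t$, with one of the two disjoint from $\Sig_t$, and prove by induction on $k$ that "$\Sig_t$ disjoint from $\Sig^{\pp}_k$ $\Rightarrow$ $I_t\in U_1$" and "$\Sig_t$ disjoint from $\Sig^{\pn}_k$ $\Rightarrow$ $I_t\in U_2$" (with the three cases as before, the analogue of Case~2's bicompressibility argument now saying that a genus-$g(V)+1$ bicompressible surface sandwiched appropriately is reducible and, after compressing along a reducing sphere, yields a genus-$g(V)$ Heegaard surface of $V$ that must be standard, i.e. lies in $U_1\cap U_2$). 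This shows the image of $\wtil\rho$ lies in $U_1\cup U_2$, hence $[\vph]=0$ in $\pi_2(\Unk(V,I),U_1\cup U_2)$, completing the proof of Lemma~\ref{lem:factorization}, and with it Theorem~\ref{thm:handlebody} via van Kampen's theorem.

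The main obstacle I anticipate is the spanning step: one must correctly identify the right notions of "mostly above/below", "spanning" and "splitting" relative to the sweep-out $f$ of the \emph{handlebody} $V$ (whose spine end behavior differs from the closed case), and check that the distance-type inequality coming from Lemma~\ref{lem:splitting} still rules out splitting surfaces when $g(V)\ge2$; getting the Euler-characteristic/disk-bounding bookkeeping right for the arc $I$ and its neighborhood boundary $\Sig$, so that the obstruction is exactly the failure of $\Sig'$ (here a genus-$g(V)$ surface) to have small genus, is where the real content lies. The parametrized isotopy (Hatcher plus the Smale Conjecture) and the inductive Case~1--3 analysis should then go through essentially verbatim from Section~\ref{sec:poof}.
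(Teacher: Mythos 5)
Your proposal takes a completely different route from the paper, and the route you choose has an unresolved gap at precisely the step you flag as the ``main obstacle.'' The spanning/splitting dichotomy of Section~\ref{sec:sweep-out} is powered by a distance hypothesis: Lemma~\ref{lem:splitting} rules out splitting surfaces only because $d(\Sig')>2g(\Sig')+2$ is assumed for the lower-genus splitting of the \emph{closed} manifold. In the handlebody $V$ there is no analogous hypothesis to invoke. The would-be ``Heegaard splitting'' $(V,\Sig')$ with $\Sig'$ parallel to $\partial V$ has a trivial compression body on one side, so one disk set is empty and the Hempel distance is not defined in the form used by Lemma~\ref{lem:splitting}; Johnson's Proposition~27 does not transfer verbatim, and your appeal to ``$\partial$-parallelism of $I$ plus $g(V)\ge 2$'' as an Euler-characteristic obstruction is not an argument — it is exactly the missing content. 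You would also need to redo the whole graphic setup for a sweep-out of a manifold with boundary (where $f^{-1}(1)=\partial V$ is a surface, not a spine), and redo the Case~2 bicompressibility analysis in $V$ rather than in $\Sig'\times J$. None of this is carried out, so as written the proposal does not constitute a proof.

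The paper's actual argument is much more elementary and bypasses sweep-outs entirely. Surjectivity of $\pi_1(U_1\cup U_2)\to\pi_1(\Unk(V,I))$ is quoted from Scharlemann \cite{Sc13} (the freewheeling and anchored subgroups generate), so only $\pi_2(U,U_1\cup U_2)=0$ is needed. Given $\vph:(D^2,\partial D^2)\to(U,U_1\cup U_2)$ with $I_u=\vph(u)$, one lifts $\vph$ through the fibration $\Diff(V)\to\Unk(V,I)$ to obtain a smooth family of parallelism disks $E_u$ for $I_u$ (Claim~\ref{clm:family_parallelism}). Writing the boundary loop as a word in the generators $\lambda_i,\mu_i,\rho,\alpha_i,\alpha_i'$, one checks generator by generator that for $u\in\partial D^2$ the disk $E_u$ meets $\bigcup_i E_i$ only in nested arcs parallel to $I_u$ and meets $K$ in finitely many points (Claim~\ref{clm:intersection_pattern}). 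Then a parametrized radial shrinking $\Theta_{u,t}$ of $I_u$ along $E_u$ into a collar of $\partial V$ pushes every $I_u$ into $U_1$, while the controlled intersection pattern guarantees that for $u\in\partial D^2$ the arc $\Theta_{u,t}(I_u)$ stays disjoint from $K$ or from $\bigcup_i E_i$ throughout, i.e.\ the boundary stays in $U_1\cup U_2$. This kills $[\vph]$ in $\pi_2(U,U_1\cup U_2)$ with no graphics, no spanning lemma, and no distance bound. If you want to keep your strategy you must first prove a handlebody analogue of Lemma~\ref{lem:spanning} from scratch; the paper shows this is unnecessary.
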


\noindent 
In fact, by the same argument as in Section~\ref{sec:poof}, 
it is easily seen that $\pi_k(U_1 \cup U_2)= \pi_k(\Unk(V,I))=0$ for $k \ge 2$. 
(And of course, this fact is unnecessary for our present purpose.) 
So we will see that 
the natural map $\pi_1(U_1 \cup U_2) \rightarrow \pi_1(\Unk(V,I))$ 
is an isomorphism. 

\begin{proof}
For brevity, set $U:=\Unk(V,I)$. 
In \cite{Sc13}, it was shown that $\pi_1(U)$ is generated by the two subgroups 
$\pi_1(U_1)$ ($=\Ff_E$) and $\Af_{E_1,\ldots,E_{g(V)}}$ ($\subset \pi_1(U_2)$). 
It follows from this fact 
that the map $\pi_1(U_1 \cup U_2) \rightarrow \pi_1(U)$ is a surjection. 
We will see that the map $\pi_1(U_1 \cup U_2) \rightarrow \pi_1(U)$ is an injection. 
In other words, we will see that $\pi_2(U,U_1 \cup U_2)=0$. 
Let $\vph:(D^2,\partial D^2) \rightarrow (U,U_1 \cup U_2)$. 
Put $I_u:=\vph(u)$ for $u \in D^2$. 
In the same spirit of the proof of Lemma~\ref{lem:family_sweepout}, 
we can show the following. 

\begin{claim}\label{clm:family_parallelism}
There exists a (smooth) family of disks $\{E_u \mid u \in D^2\}$ in $V$ 
such that $E_u$ is a parallelism disk for $I_u$. 
\end{claim}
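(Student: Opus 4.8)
The plan is to imitate the proof of Lemma~\ref{lem:family_sweepout}. First I would fix, once and for all, a parallelism disk $E$ for the basepoint arc $I$, so that $\partial E = I \cup \alpha$ where $\alpha$ is an arc in $\partial V$ meeting $I$ precisely in $\partial I$. Recall from \cite{JM13} that the natural projection $p:\Diff(V) \to \Diff(V)/\Diff(V,I) = \Unk(V,I)$ is a fibration, and that under the resulting bijection between $\Unk(V,I)$ and the set of images of $I$ under diffeomorphisms of $V$, a coset $g\cdot \Diff(V,I)$ corresponds to the arc $g(I)$. In particular, for any $g_u \in p^{-1}(\varphi(u))$ we have $I_u = \varphi(u) = g_u(I)$ under this identification.

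Next I would lift $\varphi$ through $p$. Since $D^2$ is contractible, the pulled-back bundle $\varphi^{*}\Diff(V) \to D^2$ is trivial, hence admits a section; equivalently, applying the homotopy lifting property along a contraction of $D^2$, the map $\varphi:D^2 \to \Unk(V,I)$ lifts to $\widetilde{\varphi}:D^2 \to \Diff(V)$ with $p \circ \widetilde{\varphi} = \varphi$. Working in the Fr\'echet manifold category as in \cite{JM13}, one may take this section, and hence $\widetilde{\varphi}$, to be smooth.

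Finally I would set $E_u := \widetilde{\varphi}(u)(E)$. Because $\widetilde{\varphi}(u)$ is a diffeomorphism of $V$ it preserves $\partial V$, so $\partial E_u = \widetilde{\varphi}(u)(I) \cup \widetilde{\varphi}(u)(\alpha) = I_u \cup \widetilde{\varphi}(u)(\alpha)$ with $\widetilde{\varphi}(u)(\alpha) \subset \partial V$ an arc meeting $I_u$ only in $\partial I_u$; hence $E_u$ is an embedded parallelism disk for $I_u$, and the family $\{E_u \mid u \in D^2\}$ depends smoothly on $u$ since $\widetilde{\varphi}$ does.

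There is no genuine obstacle in this argument; the one point deserving a word of care is the passage from a merely continuous lift of $\varphi$ to a \emph{smooth} family of disks. This is handled exactly as the analogous smoothing issues in \cite{JM13} --- either by observing that $p$ is a fibration of Fr\'echet manifolds and choosing a smooth section of the trivial pullback bundle over $D^2$, or by a routine smooth approximation of $\widetilde{\varphi}$ rel the required incidence data.
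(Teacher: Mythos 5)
Your proof is correct and follows essentially the same route as the paper: lift $\vph$ through the fibration $\Diff(V)\rightarrow \Diff(V)/\Diff(V,I)$ and set $E_u:=\wtil{\vph}(u)(E)$ for a fixed parallelism disk $E$. The only cosmetic difference is that the paper justifies the fibration property of $\Diff(V)\rightarrow \Diff(V)/\Diff(V,I)$ by first invoking Scharlemann's homotopy equivalence with $\Diff(V)\rightarrow \Diff(V)/\Diff(V,\Sig)$ (which is a fibration by Johnson--McCullough), whereas you assert it directly; the substance is unchanged.
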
 

\begin{proof}
By \cite{Sc13}, the map $\Diff(V) \rightarrow \Diff(V)/\Diff(V,\Sig)$ 
is homotopy equivalent to 
$\Diff(V) \rightarrow \Diff(V)/\Diff(V,I)$. 
The former is a fibration \cite{JM13}, and so is the latter. 
Thus, the map  $\vph:D^2 \rightarrow U$ lifts to a map 
$\wtil{\vph}:D^2 \rightarrow \Diff(V)$.  
Now define $E_u:=\wtil{\vph}(u)(E)$. 
\end{proof}

Since $\vph(\partial D^2) \subset U_1 \cup U_2$, 
the isotopy $\{I_u \mid u \in \partial D^2\}$ represents an element of 
$\pi_1(U_1 \cup U_2)$.  
So we can write this isotopy as a product 
$\omega_1 \om_2 \cdots \om_n$ of the $\om_k$'s, 
where each $\om_k$ is either 
$\lambda_i$, $\mu_i$, $\rho$, $\alpha_i$,$\alpha'_i$ or their inverses. 
Corresponding to this factorization, 
there is a division of $\partial D^2$ into the intervals 
$J_1:=[u_0,u_1],\ldots,J_n:=[u_{n-1},u_n]$ with $u_0=u_n$. 

\begin{claim}\label{clm:intersection_pattern}
After a deformation of $\{E_u \mid u \in D^2\}$ near $\partial D^2$, 
the following hold for any $u \in \partial D^2$:  
\begin{enumerate}
\item[(i)] $E_u$ intersects $\bigcup_{i=1}^{g(V)} E_i$ at finitely many arcs, 
	and $E_u$ intersects $K$ at finitely many points. 
\item[(ii)] Each arc of $E_u \cap \bigcup_{i=1}^{g(V)} E_i$ is parallel to $I_u$ 
	in $E_u$. 
\item[(iii)] If $a$ and $a'$ are arcs of  
	$E_u \cap \bigcup_{i=1}^{g(V)} E_i$, 
	then $a$ and $a'$ are nested in the following sense: 
	if $\Delta$ and $\Delta'$ are bigons in $E_u$ cut by $a$ and $a'$ respectively, 
	then either $\Delta \subset \Delta'$ or $\Delta' \subset \Delta$ holds.  
\end{enumerate}
\end{claim}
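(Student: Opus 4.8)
The plan is to bring the family $\{E_u\}$ into standard position with respect to the fixed meridian disks $E_1,\dots,E_{g(V)}$ and the spine $K$, carrying out all of the required innermost-disk reductions in families, in the style of Hatcher \cite{Hat76} and exactly as in Section~\ref{sec:poof}. The first step is purely general position: after a small perturbation of $\{E_u\mid u\in D^2\}$ supported in a collar of $\partial D^2$ and moving only the disks $E_u$, not the arcs $I_u=\vph(u)$ (so that neither $\vph$ nor the containment $\vph(u)\in U_1\cup U_2$ is affected), we may assume that for every $u\in\partial D^2$ the disk $E_u$ is transverse to each $E_i$ and to $K$, and that $I_u$ is transverse to each $E_i$. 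Then $E_u\cap K$ is a finite set of points for every such $u$, which gives the $K$-part of (i), and $E_u\cap\bigcup_i E_i$ is a properly embedded $1$-manifold in the disk $E_u$, i.e.\ a disjoint union of arcs and circles.

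The second step removes the circles and the arcs that fail to be parallel to $I_u$. Write $\partial E_u=I_u\cup\beta_u$ with $\beta_u\subset\partial V$. Circles go away by the standard innermost-disk argument, using that the handlebody $V$ is irreducible: a circle of $E_u\cap E_i$ innermost in $E_i$ bounds there a disk with interior disjoint from $E_u$, and together with the subdisk it bounds in the disk $E_u$ this is a $2$-sphere, which bounds a ball across which $E_u$ may be isotoped. For the arcs, the point is that whenever $E_u\cap\bigcup_i E_i$ contains an arc that is not parallel to $I_u$ in $E_u$, a standard innermost-disk and boundary-compression analysis (again using irreducibility of $V$) lets one strictly decrease the number of intersection components, either by an isotopy of $E_u$ that fixes $I_u$, or --- when that would force $I_u$ to move --- by replacing $E_u$ with a new parallelism disk for $I_u$ having fewer intersections with $\bigcup_i E_i$, obtained by cutting $E_u$ along the arc and capping the piece that contains $I_u$ with a disk whose interior is disjoint from $E_u$. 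Iterating these moves and performing them simultaneously over $\partial D^2$ and over a collar of $\partial D^2$, by the parametrized version of the innermost-disk argument used in Section~\ref{sec:poof}, yields a family in which, for every $u\in\partial D^2$, $E_u\cap\bigcup_i E_i$ is a finite union of arcs each parallel to $I_u$ in $E_u$; with the first step this is (i) and (ii).

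Part (iii) is then a formal consequence. For an arc $a$ of $E_u\cap\bigcup_i E_i$, let $\Delta(a)$ be the closure of the component of $E_u\setminus a$ meeting $I_u$; since $a$ is parallel to $I_u$, this is a bigon cut off by $a$ on the $I_u$-side. If $a$ and $a'$ are two such arcs then, being disjoint, $a'$ lies in one component of $E_u\setminus a$: when $a'\subset\Delta(a)$ we get $\Delta(a')\subset\Delta(a)$, and when $a'$ lies in the other component we get $\Delta(a)\subset\Delta(a')$. In either case the two bigons are nested, which is (iii).

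The step I expect to be the main obstacle is the parametrized bookkeeping in the second step. As $u$ moves around $\partial D^2$ (and through the collar) the arcs and circles of $E_u\cap\bigcup_i E_i$ are born and die in pairs, several competing reductions are available at once, and they must be assembled into a single deformation of the family $\{E_u\}$ that agrees with the original family at the inner edge of the collar. This is precisely the situation handled by Hatcher's argument reproduced in Section~\ref{sec:poof} --- organizing the reductions by a ``chart'' over $D^2$ that records when each is performed, and invoking the Smale Conjecture to make the elementary moves vary smoothly --- so I would adapt that argument essentially verbatim; everything else is a routine innermost-disk analysis.
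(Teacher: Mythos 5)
Your approach is genuinely different from the paper's, and it contains a gap that I do not think can be patched without essentially switching to the paper's argument. The paper does not put the family into general position and run a parametrized innermost-disk reduction. Instead it exploits the hypothesis $\vph(\partial D^2)\subset U_1\cup U_2$ to write the boundary loop $\{I_u\mid u\in\partial D^2\}$ as a word $\om_1\om_2\cdots\om_n$ in the explicit generators $\lambda_i,\mu_i,\rho,\alpha_i,\alpha'_i$, subdivides $\partial D^2$ into intervals $J_k$ accordingly, deforms the disks $E_u$ over $\partial D^2$ so that they realize the standard isotopies representing these generators, and then verifies (i)--(iii) by induction on $k$: starting from $E_{u_0}=E$, which is disjoint from $\bigcup_i E_i\cup K$, each $\lambda_i^{\pm1}$ only introduces or removes arcs parallel to $I_u$ (inserted concentrically with the existing ones), each $\alpha_i^{\pm1}$ or $(\alpha'_i)^{\pm1}$ only introduces or removes a single point of $E_u\cap K$, and $\mu_i^{\pm1},\rho^{\pm1}$ change nothing. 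No transversality, surgery, or Hatcher-style chart is needed for this claim.

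The concrete gap in your write-up is part (iii): it is not a formal consequence of (ii). Once (ii) holds, every arc $a$ of $E_u\cap\bigcup_iE_i$ has both endpoints on $\beta_u$ and cuts off a bigon disjoint from $I_u$; but two disjoint such arcs can perfectly well cut off disjoint bigons (picture $E_u$ passing through $E_i$ near one endpoint of $I_u$ and through $E_j$ near the other), and then neither bigon contains the other. Your case analysis breaks exactly there: when $a'$ lies in the component of $E_u\setminus a$ containing $I_u$, it does not follow that the $I_u$-side of $a'$ is contained in the $I_u$-side of $a$ --- in the configuration above each $I_u$-side contains the other arc's bigon. The nesting is a genuine additional property of the family; in the paper it comes out of the inductive bookkeeping, because each $\lambda_i$-move pushes the whole disk, together with its existing intersection pattern, through $E_i$. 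Separately, your reduction step is heavier than you acknowledge: replacing $E_u$ by a new parallelism disk obtained by cutting and capping is not an isotopy of the family, there is no complexity that decreases monotonically over all of $\partial D^2$ simultaneously, and the chart construction in Section~\ref{sec:poof} is built for compressing closed surfaces along intersection circles, not for boundary-compressing parallelism disks along arcs. None of this machinery is needed once one uses the factorization of the boundary loop into generators.
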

\noindent See Figure~\ref{fig:parallelism}. 
\begin{figure}
\includegraphics[width=6cm]{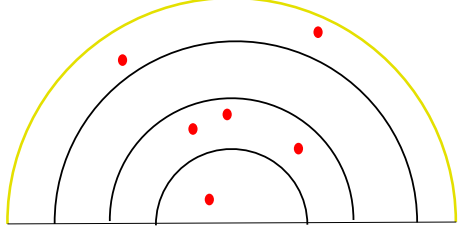}
\caption{For $u \in \partial D^2$, 
the parallelism disk $E_u$ intersects $\bigcup_{i=1}^{g(V)} E_i$ 
only at arcs parallel to $I_u$  
and intersects $K$ at finitely many points.}
\label{fig:parallelism}
\end{figure}

\begin{proof}

The key is the following simple observation.  
For each interval $J_k$, there are the three possibilities: 

\begin{itemize}
\item $\om_k=\lam_i^\ep$ for some $1 \le i \le g(V)$ and $\ep=\pm1$. 
	Then, during the move $\om_k$, some intersection arcs of  
	$E_u \cap \bigcup_{i=1}^{g(V)} E_i$ are introduced or removed 
	(possibly both may occur).
	All such arcs are parallel to $I_u$ in $E_u$.  
	The intersection pattern of $E_u \cap K$ is not changed by $\om_k$. 
	See the above in Figure~\ref{fig:intersection_arcs}. 
\item $\om_k=\alpha_i^\ep$ 
	or $\om_k=\alpha_i^{\prime\ep}$ for some $1 \le i \le g(V)$ and $\ep=\pm1$. 
	Then, during the move $\om_k$, a single intersection point of $E_u \cap K$ 
	is introduced or removed. 
	The intersection pattern of $E_u \cap \bigcup_{i=1}^{g(V)} E_i$ 
	is not changed by $\om_k$. 
	See the below in Figure~\ref{fig:intersection_arcs}. 
\item $\om_k=\mu_i^\ep$ or $\om_k=\rho^\ep$ for some $1 \le i \le g(V)$ 
	and $\ep=\pm1$. 
	Then, during the move $\om_k$, the intersection pattern of 
	$E_u \cap  (\bigcup_{i=1}^{g(V)} E_i \cup K)$ does not change. 
\end{itemize}
Recall that $E_{u_0}=E_{u_n}=E$ by definition. 
In particular, $E_{u_0} \cap (\bigcup_{i=1}^{g(V)} E_i \cup K)=\emptyset$. 
By the above observation,
it follows that the conditions (i), (ii) and (iii) are satisfied 
on the interval $J_1$. 
By an inductive argument, 
we can see that these three conditions are satisfied 
on any interval $J_k$ as well.  
\end{proof}
\begin{figure}
\includegraphics[width=14cm]{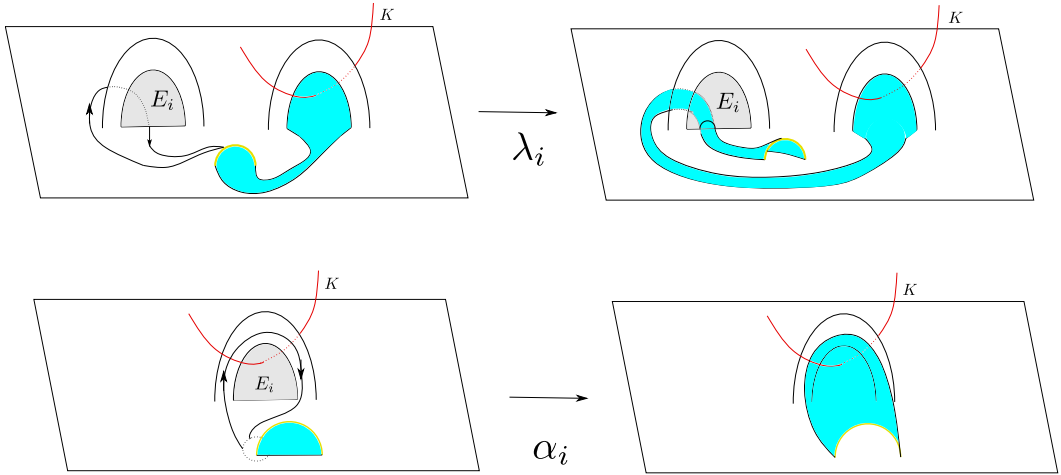}
\caption
{The move $\lambda_i$ introduces or removes 
arcs parallel to $I_u$ (above), and 
the move $\alpha_i$ introduces or removes 
a single point (below).} 
\label{fig:intersection_arcs}
\end{figure}

Put 
$B:=\{re^{\sqrt{-1}\theta} \in \mathbb{C} \mid 0 \le r \le 1,\, 0 \le \theta \le \pi\}$. 
There is a smooth family $\{f_u:E_u \rightarrow B \mid u \in D^2\}$ 
of diffeomorphisms between $E_u$ and $B$. 
(More rigorously, this is a consequence of the fact that 
the space $\Diff(D^2 \,\mathrm{rel}.\,\partial D^2)$ is contractible \cite{Sm59}.)
Furthermore, by Claim~\ref{clm:intersection_pattern}, 
we may choose $\{f_u\}$ so that it satisfies the following:  
for any $u \in \partial D^2$, each arc of $E_u \cap \bigcup_{i=1}^{g(V)} E_i$ is mapped 
to an arc written as 
$\{re^{\sqrt{-1} \theta} \in B \mid r=r_0, 0 \le \theta \le \pi\}$ 
for some $0<r_0\le 1$. 
For $t \in [0,1)$, 
define $\sigma_t:B \rightarrow B$ by 
$\sigma_t(re^{\sqrt{-1}\theta}):=(1-t)re^{\sqrt{-1}\theta}$. 
Set $\Theta_{u,t}:=f_u^{-1} \circ \sigma_t \circ f_u$ for $u \in D^2$ and $t \in [0,1)$. 
Then, the isotopy $\Theta_{u,t}$ shrinks $I_u$ along $E_u$
into a small neighborhood of a point in $E_u \cap \partial V$ as $t \rightarrow 1$. 
If $t$ is sufficiently close to $1$, then 
$\Theta_{u,t}(I_u) \in U_1$. 
Furthermore, 
by definition, for $u \in \partial D^2$ and $t \in [0,1)$, 
$\Theta_{u,t}(I_u)$ is disjoint from either $K$ or $\bigcup_{i=1}^{g(V)} E_i$. 
Let $u \in \partial D^2$ and $t \in [0,1)$. 
If $\Theta_{u,t}(I_u) \cap K=\emptyset$, then $\Theta_{u,t}(I_u) \in U_1$. 
On the other hand, if $\Theta_{u,t}(I_u) \cap \bigcup_{i=1}^{g(V)} E_i=\emptyset$, 
then $\Theta_{u,t}(I_u) \in U_2$. 
This means that 
$\Theta_{u,t}(I_u) \in U_1 \cup U_2$ for $u \in \partial D^2$ and $t \in [0,1)$.
Therefore, we conclude that $[\vph]=0 \in \pi_2(U,U_1 \cup U_2)$. 
\end{proof}

\subsection{Proof of Corollary~\ref{cor:conclusion}}
\label{sub:corollary}
Finally, we prove Corollary~\ref{cor:conclusion}. 

\vspace{0.5em}
\noindent 
\textit{Proof of Corollary~\ref{cor:conclusion}.} 
By Theorems~\ref{thm:amalgamation} and \ref{thm:handlebody}, 
$\Isot(M, \Sig)$ is finitely presented. 
It remains to show that $\MCG(M,\Sig)$ is finitely presented. 
By definition, there exists an exact sequence 
\[1 \rightarrow \Isot(M,\Sig) \rightarrow \MCG(M,\Sig) \rightarrow \MCG(M).\]
By Theorem~\ref{thm:dist-hyp}, $M$ is hyperbolic, and hence 
$\MCG(M)$ is finite. 
Therefore, $\MCG(M,\Sig)$ is finitely presented. 
\qed 
\vspace{0.5em}

\bibliographystyle{amsrefs}

\end{document}